\numberwithin{equation}{section}
\title{On Frobenius (completed) orbit categories}
\author{ALFREDO N\'AJERA CH\'AVEZ}
\address {Max-Planck-Institut Für Mathematik\\
          Vivatsgasse 7,
                   53111 Bonn\\
                   Germany.}
\email {alfredo.najera-chavez@imj-prg.fr}
\newcommand{\ie}{{\em i.e. }}
\newcommand{\cf}{{\em cf.}\ }
\newcommand{\ul}[1]{\underline{#1}}
\newcommand{\ol}[1]{\overline{#1}}
\newtheorem{theorem}{Theorem}
\newtheorem{lemma}[theorem]{Lemma}
\newtheorem{proposition}[theorem]{Proposition}
\newtheorem{corollary}[theorem]{Corollary}
\newtheorem{lemma and definition}[theorem]{Lemma and Definition}
\theoremstyle{definition}
\newtheorem{notation}[theorem]{Notation}
\newtheorem*{theorem*}{Theorem}
\newtheorem{remark}[theorem]{Remark}
\newtheorem{example}[theorem]{Example}
\newtheorem{definition}[theorem]{Definition}
\newtheorem{assumption}[theorem]{Assumption}
\newtheorem{warning}[theorem]{Warning}
\newcommand{\opname}[1]{\operatorname{\mathsf{#1}}}
\renewcommand{\mod}{\opname{mod}\nolimits}
\newcommand{\proj}{\opname{proj}\nolimits}
\newcommand{\Mod}{\opname{Mod}\nolimits}
\newcommand{\op}{^{op}}
\newcommand{\corb}{\ \widehat{\!\! /} }
\newcommand{\rep}{\opname{rep}\nolimits}
\newcommand{\eff}{\opname{eff}\nolimits}
\newcommand{\pretr}{\opname{pretr}\nolimits}
\newcommand{\ac}{\mathcal{A}c}
\newcommand{\caf}{\ca \corb F}
\newcommand{\paf}{\cp \corb  F}
\newcommand{\eaf}{\ce \corb F}
\newcommand{\colim}{\opname{colim}}
\newcommand{\cok}{\opname{cok}\nolimits}
\newcommand{\Z}{\mathbb{Z}}
\newcommand{\px}{\phantom{x}}
\newcommand{\Hom}{\opname{Hom}}
\newcommand{\Ext}{\opname{Ext}}
\newcommand{\ten}{\otimes}
\newcommand{\lten}{\overset{\boldmath{L}}{\ten}}
\newcommand{\gpr}{\operatorname{gpr}\nolimits}
\newcommand{\fun}{\opname{fun}}
\newcommand{\inv}{\opname{inv}}
\newcommand{\ca}{{\mathcal A}}
\newcommand{\cb}{{\mathcal B}}
\newcommand{\cc}{{\mathcal C}}
\newcommand{\cd}{{\mathcal D}}
\newcommand{\ce}{{\mathcal E}}
\newcommand{\ch}{{\mathcal H}}
\newcommand{\cp}{{\mathcal P}}
\newcommand{\cR}{{\mathcal R}}
\newcommand{\cs}{{\mathcal S}}
\newcommand{\ct}{{\mathcal T}}
\newcommand{\cv}{{\mathcal V}}
\begin{document}

\maketitle

\begin{abstract}
Let $\ce$ be a Frobenius category, ${\mathcal P}$ its subcategory of projective objects and $F:{\mathcal E} \to {\mathcal E}$ an exact automorphism. We prove that there is a fully faithful functor from the orbit category ${\mathcal E}/F$ into $\operatorname{gpr}({\mathcal P}/F)$, the category of finitely-generated Gorenstein-projective modules over ${\mathcal P}/F$. We give sufficient conditions to ensure that the essential image of ${\mathcal E}/F$ is an extension-closed subcategory of $\operatorname{gpr}({\mathcal P}/F)$. If ${\mathcal E}$ is in addition Krull-Schmidt, we give sufficient conditions to ensure that the completed orbit category ${\mathcal E} \ \widehat{\!\! /}  F$ is a Krull-Schmidt Frobenius category. Finally, we apply our results on completed orbit categories to the context of Nakajima categories associated to Dynkin quivers and sketch applications to cluster algebras.
\end{abstract}

\tableofcontents

\section{Introduction}
Let $\ce$ be an additive category and $F: \ce \to \ce$ an automorphism. The category of orbits associated to this data was first introduced by Cibils and Marcos in \cite{Cibils Marcos}. It was further studied by Asashiba in \cite{Asashiba, Asashiba 2} and by Keller in \cite{Keller triang, Keller triang corrections}. By definition, the orbit category $\ce/F$ has the same objects as $\ce$, the set of morphisms from an object $X$ to an object $Y$ is given by 
\begin{equation}
\label{usual orbit category}
\ce / F(X,Y)= \bigoplus_{l\in \Z} \ce(X,F^l(Y))
\end{equation}	
and the composition of morphisms is defined in a natural way (see (\ref{composition})). Clearly $\ce/F$ is still an additive category and the canonical projection $p:\ce \to \ce/F$ is an additive functor. Now suppose that $\ce$ is a Frobenius category and that $F$ is an exact functor. We are interested in the following question:
\\

\emph{Is there an exact structure on the orbit category such that $\ce/F$ becomes a Frobenius category and the canonical projection an exact functor?}
\\

%The answer is negative in general. However, 
In this article we give sufficient conditions for the answer to be positive. Recall that Frobenius categories and triangulated categories are closely related: the stable category $\ul{\ce}$ is canonically triangulated and $F$ induces a triangulated automorphism $\ul{F}$ on $\ul{\ce}$. Moreover, the analogous question for triangulated categories was already studied in \cite{Keller triang} where the author defined the triangulated hull of the orbit category. 
%The stable category of $\ul{\ce}$ is by definition the quotient category of $\ce$ by the ideal of morphisms factorizing through a projective object. It is well known that $\ul{\ce}$ is a triangulated category and that $F$ induces a triangulated automorphism $\ul{F}: \ul{\ce} \to \ul{\ce}$. Keller's construction will be useful for our. 
In a similar way we show that $\ce/F$ embeds into a certain ambient exact category and give sufficient conditions to ensure that $\ce/F$ is an extension-closed subcategory of the ambient category. To be more precise, let $\cp$ be the full subcategory of $\ce$ determined by its projective objects and $\gpr(\cp/F)$ be the category of finitely-generated Gorenstein-projective modules over $\cp /F$ (see \thref{gpr}). The category $\gpr(\cp/F)$ is an exact category, it is even a Frobenius category. Inspired by a result of Chen \cite{Chen}, we prove that there is a full and faithful functor $\ce/F \hookrightarrow \gpr(\cp/F)$. We prove the following theorem which is \thref{main theorem usual} of this note.

\begin{theorem*}
Suppose that $\ul{\ce}/\ul{F}$ is equivalent to its triangulated hull. If $\ce/F$ has split idempotents, then $\ce/F$ is closed under extensions in $\gpr(\cp/F)$. Moreover, the induced exact structure on $\ce /F$ makes the canonical projection $\ce \to \ce/F$ exact and makes $\ce/F$ a Frobenius category whose stable category is triangle equivalent to $\ul{\ce}/\ul{F}$.
\end{theorem*}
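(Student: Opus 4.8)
The plan is to first establish the fully faithful embedding $\Phi:\ce/F \hookrightarrow \gpr(\cp/F)$ (this is the result inspired by Chen that is promised in the introduction), so the real content of this theorem is to identify the essential image as an extension-closed subcategory under the stated hypotheses. I would begin by recalling how $\gpr(\cp/F)$ sits inside the triangulated category: the stable category $\ul{\gpr(\cp/F)}$ is the singularity category (or a related triangulated category) of $\cp/F$, and there is a natural triangle functor relating it to $\ul{\ce}/\ul F$. The key input is Keller's description of the triangulated hull of $\ul{\ce}/\ul F$ as the thick (or idempotent-completed) subcategory of the derived category of $\cp/F$-modules generated by the image of $\cp$; under the hypothesis that $\ul{\ce}/\ul F$ already equals its triangulated hull, the embedding $\ul{\ce}/\ul F \hookrightarrow \ul{\gpr(\cp/F)}$ is already ``dense up to direct summands'', and the split-idempotents hypothesis on $\ce/F$ promotes this to an equivalence onto a full triangulated subcategory.

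The next step is the extension-closure argument. Given a conflation $0 \to \Phi(X) \to M \to \Phi(Z) \to 0$ in $\gpr(\cp/F)$, I would push it to the stable category to obtain a triangle $\Phi(X) \to M \to \Phi(Z) \to \Sigma\Phi(X)$ in $\ul{\gpr(\cp/F)}$; since $\Phi$ induces a triangle equivalence of $\ul{\ce}/\ul F$ onto a \emph{triangulated} (hence in particular strictly full, as $\ce/F$ has split idempotents) subcategory, the object $M$ lies, up to stable isomorphism, in the essential image. A stable isomorphism $M \cong \Phi(Y) \oplus P$ with $P$ projective over $\cp/F$ then has to be upgraded to an honest isomorphism in $\gpr(\cp/F)$: here one uses that $\Phi$ sends projective objects of $\ce/F$ (equivalently, objects of $\cp/F$) to the projective-injectives of $\gpr(\cp/F)$, together with the fact that both categories are Frobenius, so a split conflation can be added to realize $M$ itself as $\Phi$ of an object of $\ce/F$. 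This shows $\ce/F$ is closed under extensions, and the induced exact structure (conflations = those sequences in $\ce/F$ that become conflations in $\gpr(\cp/F)$) is a genuine Frobenius exact structure because extension-closed subcategories of Frobenius categories containing the projective-injectives are again Frobenius.

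It then remains to check that the canonical projection $p:\ce \to \ce/F$ is exact and that the stable category is the expected one. For the former, one verifies that $p$ sends a conflation in $\ce$ to a conflation in $\ce/F$ by checking that its image under $\Phi\circ p$ is a conflation in $\gpr(\cp/F)$; since $\Phi\circ p$ is, up to natural isomorphism, the functor $\ce \to \gpr(\cp/F)$ induced by restriction along $\ce \to \ce/F$ and the Chen-type construction, this reduces to the exactness of that functor, which should already be available from the construction of $\Phi$. For the stable category: the projective-injectives of the new exact structure on $\ce/F$ are exactly $\add$ of the image of $\cp$ (they are the projective-injectives of $\gpr(\cp/F)$ lying in the subcategory), so the stable category $\ul{\ce/F}$ is $(\ce/F)/\add(p(\cp))$, which is precisely $\ul{\ce}/\ul F$ by construction of the orbit category on stable categories; the triangulated structures agree because both are inherited from $\ul{\gpr(\cp/F)}$ via the embedding.

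The main obstacle I anticipate is the passage from a \emph{stable} isomorphism $M \cong \Phi(Y)\oplus P$ to an \emph{honest} isomorphism in $\gpr(\cp/F)$ that also respects the conflation, i.e.\ showing that the conflation $0\to\Phi(X)\to M\to\Phi(Z)\to 0$ is itself isomorphic to the image of a conflation in $\ce/F$ rather than merely having its middle term in the essential image. This is exactly the point where the split-idempotents hypothesis on $\ce/F$ is essential (to rule out phantom summands), and where one must carefully track the difference between $\gpr$ and its stable category; I expect this to require a small diagram chase adding split conflations of projective-injectives on both ends, using that $\Phi$ is full and faithful to lift the resulting stable isomorphism of complexes to an actual morphism of conflations.
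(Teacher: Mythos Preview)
Your approach is essentially the paper's: both embed $\ce/F$ in $\gpr(\cp/F)$ via the restricted Yoneda functor, pass an extension to a triangle in $\ul{\gpr}(\cp/F)$, use that the induced functor $\ul{\ce}/\ul{F}\hookrightarrow\ul{\gpr}(\cp/F)$ is a fully faithful \emph{triangle} functor (so its image is a triangulated subcategory containing the two ends of the triangle, hence also the middle), and then invoke split idempotents to conclude the middle term already lies in $\ce/F$. The paper carries out the stable comparison slightly more explicitly, computing $\Ext^1_{\gpr(\cp/F)}(\pi Y,\pi X)\cong\bigoplus_{l}\Ext^1_{\ce}(Y,F^l X)$ and matching the given extension with a specific triangle of $\ul{\ce}/\ul{F}$ via this bijection before applying the five lemma, but your closure-under-triangles phrasing is equivalent. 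The one place your sketch is imprecise is the justification that the stable functor is triangulated: the paper does \emph{not} realize the triangulated hull as a thick subcategory of a derived category of $\cp/F$-modules, but instead uses the universal property of dg orbit categories, exhibiting the componentwise extension $\ca c(\cp)_{dg}\to\ca c(\proj(\cp/F))_{dg}$ as an object of $\eff(\ca c(\cp)_{dg},M_{\widetilde F},\,\cdot\,)$; this is the content of its Theorems~18 and~31 and is the real technical core. Finally, the obstacle you anticipate at the end is not an issue: for extension-closure only the middle \emph{object} must lie in the essential image, not the entire conflation, so once $M$ is a summand of $\Phi(E')\oplus P$ with $P$ projective, split idempotents in $\ce/F$ finish the argument immediately.
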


The ambient Frobenius category $\gpr(\cp/F)$ \emph{should not} be considered as the exact (or Frobenius) hull of $\ce /F$ since it  may be too large. Still, $\gpr(\cp/F)$ can be considered as a canonical ambient category for $\ce/F$ since  the functor $\ce/F \hookrightarrow \gpr(\cp/F)$ is induced (in a sense made precise in \thref{restricted Yoneda}) by the Yoneda functor $\ce/F \hookrightarrow \Mod(\ce/F)$. Even more, the triangulated structure on the stable category $\ul{\gpr}(\cp/F)$ and on the triangulated hull of $\ul{\ce}/\ul{F}$ are compatible in the following sense: the inclusion $\ce/ F \hookrightarrow \gpr(\cp/F)$ induces a fully faithful triangulated functor from the triangulated hull of $\ul{\ce}/\ul{F}$ into $\ul{\gpr}(\cp/F)$ (see \thref{triangulated functor}).

Orbit categories have appeared (perhaps sometimes in an implicit way) in the work of many mathematicians (see for instances \cite{Happel derived} \cite{BMRRT}, \cite{Keller triang}, 
\cite{Fu root}, \cite{Qin} and \cite{Scherotzke}). One of our main motivations for studying orbit categories of Frobenius categories comes from representation theory, and more concretely, from the additive categorification of acyclic cluster algebras introduced in \cite{BMRRT} (see also \cite{clusters 1} and \cite{Keller survey}). In particular, we are interested in the case where $\ce$ is in addition a Krull-Schmidt category. In general $\ce/F$ fails to be Krull-Schmidt. Yet, \emph{under certain finiteness conditions} on $F$ (stated explicitly in Section 6) we are able to give sufficient conditions to prove the following theorem which is \thref{main theorem} of this note.
\begin{theorem*}
If $\ce$ is Krull-Schmidt (and $F$ is as discribed above) then the completed orbit category $\eaf$ admits the structure of a Frobenius category which makes the canonical functor $\ce \to \eaf$ exact and whose stable category is triangle equivalent to $\ul{\ce}/\ul{F}$.
\end{theorem*}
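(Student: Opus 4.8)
The plan is to re-run the proof of \thref{main theorem usual} with the completed orbit category $\eaf$ in place of $\ce/F$; the genuinely new input is that the finiteness conditions on $F$, combined with the Krull--Schmidt hypothesis on $\ce$, force $\eaf$ to be Krull--Schmidt and hence to have split idempotents automatically. First I would construct the completed analogue of the embedding into Gorenstein-projectives: a restricted-Yoneda-type functor $\eaf \hookrightarrow \Mod(\paf)$ whose essential image lies in $\gpr(\paf)$, compatible through the canonical functor $\cp/F \to \paf$ with the functor of \thref{restricted Yoneda}. The finiteness conditions are precisely what makes the completed Hom-spaces well defined and the resulting $\paf$-modules finitely generated, so that $\gpr(\paf)$ is a legitimate Frobenius ambient category for $\eaf$.

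The technical core is the proof that $\eaf$ is Krull--Schmidt. For an indecomposable object $X$ of $\ce$ the ring $\ce(X,X)$ is local; by construction $\eaf(X,X)$ is a completion of $\bigoplus_{l\in\Z}\ce(X,F^{l}X)$ with respect to a natural ideal filtration, and the finiteness conditions guarantee that this completion is complete, hence semiperfect. Thus idempotents lift and split, every object of $\eaf$ is a finite direct sum of objects with local endomorphism rings, and $\eaf$ — and likewise $\paf$ — is Krull--Schmidt. Since $\paf$ is Krull--Schmidt and the modules in play are finitely generated, $\gpr(\paf)$ is Krull--Schmidt too, and in particular has split idempotents.

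Now I would invoke the argument of \thref{main theorem usual}. Because $\eaf$ has split idempotents, it only remains to know that the completed stable orbit category is equivalent to its triangulated hull; here one repeats Keller's triangulated-hull computation, the key point being that the \emph{infinite cones} which obstruct the equivalence in the non-completed case converge inside the completion — this is the raison d'\^etre of the construction, and the finiteness conditions are what make those limits exist. Granting this, the proof of \thref{main theorem usual} produces the exact (indeed Frobenius) structure on $\eaf$, the exactness of the canonical functor $\ce \to \eaf$, and, after identifying the projective-injective objects of $\eaf$ with $\add$ of the image of $\cp$, a triangle equivalence $\ul{\eaf}\simeq\ul{\ce}/\ul{F}$; here one checks that the morphisms introduced by the completion all vanish in the stable quotient, so no further idempotent completion of $\ul{\ce}/\ul{F}$ is needed.

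I expect the main obstacle to be, on the one hand, isolating the exact finiteness hypotheses on $F$ under which $\eaf(X,X)$ is semiperfect \emph{and} $\gpr(\paf)$ stays well behaved, and, on the other hand, the triangulated-hull step, where one must check that completion introduces enough cones to force extension-closedness while not enlarging the stable category beyond $\ul{\ce}/\ul{F}$.
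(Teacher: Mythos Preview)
Your overall architecture --- embed $\eaf$ into $\gpr(\paf)$, use Krull--Schmidt to get split idempotents, then check extension-closedness --- is the paper's. But one step is mis-calibrated and it distorts the picture of what is actually being proved.

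The equivalence of $\ul{\ce}/\ul{F}$ with its triangulated hull is a \emph{hypothesis} of the theorem (it is part of ``$F$ as described above'', the standing assumption opening Section~6), not something you derive. There is no ``convergence of infinite cones'' argument to make, and the completion plays no role at the stable level: the second finiteness condition $\ul{\ce}(Y,\ul F^{l}Z)=0$ for $l\gg0$, together with the standing vanishing for $l\ll0$, forces $\ul{\ce}\corb\ul{F}=\ul{\ce}/\ul{F}$ on the nose (\thref{additive equivalence}). The completion matters only at the Frobenius level, where the projectives contribute infinitely many nonzero $\ce(X,F^{l}P)$. So your last paragraph's worry about ``checking that completion introduces enough cones while not enlarging the stable category'' dissolves: nothing is enlarged, and nothing has to converge.

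Because of this, you are also underselling what really proves extension-closedness. The paper does not merely re-run an argument that hinges on split idempotents; the core is the explicit identification
\[
\Ext^1_{\gpr(\paf)}(\pi Y,\pi X)\ \cong\ \prod_{l\in\Z}\Ext^1_{\ce}(Y,F^{l}X)\ \cong\ \Ext^1_{\ul{\ce}/\ul F}(Y,X),
\]
from \thref{embedding lemma}(iii) together with the collapse of the product just noted. One then lifts a given extension to a triangle in $\ul{\ce}/\ul F$, pushes it into $\ul{\gpr}(\paf)$ along the \emph{triangulated} functor of \thref{triangulated functor} --- this is exactly where the triangulated-hull hypothesis is spent --- and compares with the image of the original extension via the five lemma. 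Only after obtaining $E\cong\pi(E')$ in the stable category does Krull--Schmidt enter, to pull that isomorphism up to $\gpr(\paf)$. Finally, your semiperfect/completion route to locality is heavier than needed: under $\ce(X,F^{l}X)=0$ for $l<0$, an element $(f_i)\in\eaf(X,X)$ is invertible iff $f_0$ is, so locality of $\ce(X,X)$ passes directly to $\eaf(X,X)$ (\thref{Krull-Schmidt}).
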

The \emph{completed orbit category} is defined just as the usual orbit category by replacing the direct sum in (\ref{usual orbit category}) by the direct product. The composition formula (\ref{composition}) of usual orbit categories defines a composition in the completed orbit category provided that, for every pair of objects $X$ and $Y$, the group $\ce(X,F^l (Y))$ vanishes for $l \ll 0$. This last theorem will allow us to give an explicit categorification of families of finite-type cluster algebras with coefficients. In particular, we obtain a categorification of all skew-symmetric finite-type cluster algebras with universal coefficients. We would like to stress that completed orbit categories already appeared in \cite{Igusa Todorov} where they are used to define continuous cluster categories.

%Orbit categories had found interesting applications in algebra and geometry (see for instances \cite{BMRRT}, \cite{Keller triang} and \cite{Scherotzke}). In particular, orbit categories had play a fundamental role in the representation-theoretic approach to cluster algebras. 

%The canonical projection functor $p: \ca \to \ca/F$ is endowed with a natural isomorphism $\pi \circ F \to \pi$ and is 2-universal among such functors. Clearly $\ca /F$ is still an additive category and the projection is an additive functor. 
%Orbit categories had found interesting applications in algebra and geometry (see \cite{Keller triang} and \cite{Scherotzke}). In particular, the authors in \cite{BMRRT} categorified acyclic cluster algebras using orbit categories.
%Motivated by \cite{BMRRT}, Keller studied in \cite{Keller triang} the following general problem:
%\begin{question}
%Let $\ct$ be a triangulated category endowed with a triangle equivalence $F:\ct \to \ct$. Is there a triangulated structure on the orbit category $\ct/F$ such that the canonical functor $\ct \to \ct/F$ is triangulated?
%\end{question} 
%One can show that that in general, the answer is negative. Nevertheless, Keller gives a positive answer in certain interesting cases. More precisely, he studies the case when $\ct$ is the bounded derived category of an hereditary abelian category and $F$ is a triangulated functor. In this note we study the same problem in the setting of Frobenius categories. 
%One of the main motivations to study this problem comes from the theory of cluster algebras. 

This article is organized as follows. In the Section 2 we survey Keller's construction of the triangulated hull associated to the orbit category of a triangulated category. In section~3 we recall a Theorem of Chen which shows that any Frobenius category is equivalent to an extension-closed exact subcategory of the Frobenius category formed by Gorenstein-projective modules over some additive category. This theorem will allow us to define the embedding of $\ce /F$ into $\gpr(\cp/F)$. In section 4 we prove some general results on usual and completed orbit categories which will be used intensively. In section 5 we prove the compatibility of the triangulated structure of  $\ul{\gpr}(\cp/F)$ and of the triangulated hull of $\ul{\ce}/\ul{F}$. In section 6 we give a proof of the theorems stated above. In section 7 we apply our results  on completed orbit categories to the context of Nakajima categories associated to Dynkin quivers to introduce explicit categorifications of families of finite-type cluster algebras with coefficients. In particular, we obtain a categorification of all skew-symmetric finite-type cluster algebras with universal coefficients.

\subsection*{Acknowledgments}
The work in this article is exposed in my Ph.D. thesis, supervised by Professors Bernhard Keller and Lauren Williams. I would like to thank them for their guidance and patience. I am grateful to Raika Dehy, Patrick Le Meur, Yann Palu and Pierre-Guy Plamondon for the useful discussions. The final version of this note was written at the Max Planck Institut for Mathematics, Bonn, during the 2015 fall semester. The author is deeply grateful to this institution for the financial support and for providing ideal working conditions.

\section{Reminders on dg categories and their orbit categories}

Throughout this chapter, we will freely use the theory of exact categories first introduced by Quillen in \cite{Quillen}. Our main reference for this theory is the refined treatment presented in \cite[Appendix A]{Keller chain} and the systematic study of \cite{Buhler}. We will also use the basic facts about Frobenius categories. The reader is referred to \cite{Happel book} for a treatment of this topic. The set of morphisms between two objects $X$ and $Y$ of a category $\ca$ is denoted by $\ca(X,Y)$. If $k$ is a ring and $\ca$ an additive $k$-category, a right $\ca$-module is by definition a $k$-linear functor $M:\ca^{\text{op}} \rightarrow \Mod k$, where $\Mod k$ is the category of all right $k$-modules. The morphism space between two $\ca$-modules $L$ and $M$ is denoted by $\Hom_{\ca}(L,M)$ or simply $\Hom (L,M)$ when there is no risk of confusion.

\subsection{Pretriangulated dg categories.}
In this section we recall some facts about dg categories and introduce notation. Our main reference for these results are \cite{Keller on dg} and \cite{Drinfeld}. We work over an arbitrary field $k$. In this section, all categories, dg categories, functors, dg functors, etc. are assumed to be $k$-linear.

A dg category $\cb$ is a category whose morphism spaces have the structure of a \emph{differential graded $k$-module}, or equivalently, a \emph{complex of $k$-modules}. For a dg category $\cb$ we denote by $Z^0(\cb)$ the category with the same objects as $\cb$, and with morphisms $Z^0(\cb)(X,Y)= Z^0(\cb(X,Y))$. The category $H^0(\cb)$ is defined analogously. Let $\cc_{dg}(k)$ be the dg category of differential graded $k$-modules. 
 
\begin{notation}
Let $\cb$ be a dg category. A \emph{right dg $\cb$-module} is a dg functor $L:\cb^{\op} \to \cc_{dg}(k)$. We denote by $\cc_{dg} (\cb)$ the dg category of right dg $\cb$-modules. Denote by $\cc (\cb)$ and $\ch (\cb)$ the categories $Z^0(\cc_{dg} (\cb))$ and $H^0(\cc_{dg} (\cb))$, respectively. The derived category $\cd(\cb)$ is the localization of $\cc(\cb)$ with respect to the quasi-isomorphisms.
\end{notation}

\begin{remark}
\thlabel{exact structure}
The category $\cc (\cb)$ admits an exact structure by defining an \emph{admissible short exact sequence} (or \emph{conflation}) to be a sequence $L\to M\to N$ such that the underlying sequence of graded $\cb$-modules is split short exact. Endowed with this structure, $\cc (\cb)$ becomes a Frobenius category whose stable category is $\ch (\cb)$ (\cf \cite[Lemma 2.2]{Keller deriving dg}).
\end{remark}

\begin{example}
\thlabel{dg cats in deg 0}
Let $\ca$ be an additive category. We consider $\ca$ as a dg category whose morphism complexes are concentrated in degree $0$. Then, the objects of $\cc_{dg}(\ca)$ can be thought of as \emph{complexes of right $\ca$-modules}. The morphism complex
\begin{equation*}
\ch om (X,Y) := \cc_{dg}(\ca)(X,Y)
\end{equation*}
between the complexes $X=\cdots\to X_i\to X_{i+1} \to \cdots$ and $Y=\cdots\to Y_i\to Y_{i+1} \to \cdots$ has as $n^{\text{th}}$ component the $k$-module
\begin{equation}
\ch om^n (X,Y)= \prod_{i\in \Z}\Hom_{\ca}(X_i,Y_{i+n}).
\end{equation}
The differential on $\ch om (X,Y)$ is given by
\begin{equation*}
d(f)=(f_i d_Y- (-1)^n d_X f_{i+1})_{i\in \Z} 
\end{equation*}
for $f=(f_i)_{i\in \Z}\in \ch om ^n(X,Y)$. It follows that $\cc (\ca)$ is the category of chain complexes of $\ca$-modules and that $\ch (\ca)$ is the homotopy category of chain complexes of $\ca$-modules.
\end{example}

\begin{notation}
If $\ca$ is an additive category we denote by $C(\ca)_{dg}$ the dg category of \emph{complexes with components in} $\ca$. Its objects are the complexes with components in $\ca$ and the morphism complex between two objects is defined in the same way as in \thref{dg cats in deg 0}. 
\end{notation}

\begin{warning}
Notice the difference between the categories $\cc_{dg}(\ca)$ and $C(\ca)_{dg}$.
\end{warning}

\begin{remark}
For each dg category $\cb$, the Yoneda functor $\cb \to \cc_{dg}(\cb)$ takes an object $X$ to the representable dg module $\cc_{dg}(\cb)(-,X)$. Slightly abusing, we also call the induced functors $Z^0(\cb)\to \cc(\cb)$ and $H^0(\cb) \to \ch (\cb)$ Yoneda functors.
\end{remark}

\begin{definition}
We call $\cb$ \emph{pretriangulated}, if the image of the Yoneda functor $Z^0(\cb) \hookrightarrow \cc(\cb)$ is closed under translations and extensions (with respect to the exact structure of \thref{exact structure}). If $\cb$ is pretriangulated then the category $Z^0(\cb)$ is a Frobenius subcategory of $\cc (\cb)$. A \emph{dg enhancement} of a triangulated category $\ct$ is a pair $(\cb,\epsilon)$, where $\cb$ is a pretriangulated dg category and $\epsilon: H^0(\cb)\to \ct$ is a triangle equivalence. We call $\ct$ \emph{algebraic} if it admits a dg enhancement.  
\end{definition}

\begin{example} 
\thlabel{example pretriangulated}
Let $\ca$ be an additive category. The dg category $C(\ca)_{dg}$ is a typical example of a pretriangulated dg category. If $\cb$ is a pretriangulated dg category and $\cb'$ is a subcategory of $\cb$ such that $Z^0(\cb')$ is closed in $Z^0(\cb)$ under shifts and extensions, then $\cb'$ is pretriangulated. In particular, if $\ca$ is exact, $\cp$ is a subcategory of  $\ca$ closed under direct sums and we let $\ca c(\cp)_{dg}$ be the full subcategory of $C(\ca)_{dg}$ formed by the acyclic complexes with components in $\cp$, then $\ca c(\cp)_{dg}$ is a pretriangulated dg category.
\end{example}

\begin{example}
\thlabel{canonical enhancement}
Let $\ce$ be a Frobenius category and let $\cp $ be its full subcategory of projective-injective objects. We denote by $\ul{\ce}$ the stable category of $\ce$, \ie the quotient category of $\ce$ by the ideal of morphisms factorizing through elements of $\cp$. By \thref{example pretriangulated}, the dg category $\ca c (\cp)_{dg}$ is pretriangulated. Moreover $\ca c (\cp)_{dg}$ is a (canonical) dg enhancement of $\ul{\ce}$. Indeed: the category $Z^0(\ca c (\cp)_{dg})$ identifies with the category of acyclic complexes with components in $\cp$. Is easy to see that there is triangle equivalence 
\begin{equation*}
Z^0:H^0(\ca c (\cp)_{dg}) \overset{\sim}{\longrightarrow} \ul{\ce}
\end{equation*}
which takes a complex $P^{\cdot}$ to its $0$-cycles $Z^0(P^{\cdot})$ \cf \cite[Section 1.5]{Keller Vossieck}.
\end{example}

\begin{definition}
If $\cb$ is an arbitrary small dg category and $\cb'$ is a dg category, then the category of dg functors $\ch om (\cb, \cb')$ is a dg category in a natural way (see \cite[Section 2.3]{Keller on dg}). There is a universal dg functor $\cb \to \pretr (\cb)$
to a pretriangulated dg category $\pretr (\cb)$, \ie a functor inducing an equivalence of dg categories $\ch om (\cb,\cb')\to \ch om (\pretr (\cb),\cb')$ for each pretriangulated dg category $\cb'$. The dg category $\pretr (\cb)$ is the \emph{pretriangulated hull} of $\cb$.
% We call $\pretr (\cb)$ the \emph{pretriangulated hull} of $\cb$.
\end{definition}

\begin{remark}
The pretriangulated hull of $\cb$ is constructed explicitly in \cite{Bondal Kapranov enhanced} (where it is denoted by $\text{Pre-tr}^+(\cb)$), \cf also \cite{Drinfeld} and \cite{Tabuada invariants}.
\end{remark}

\subsection{Quasi-functors.}
\begin{comment}
%%%%%%%%%%%%%%%%%%%%%%%%%%%%%%%%%%%%%%%%%%
(Notation is not compatible with the rest of the note) Let $\cb$ and $\cb'$ be small dg categories. The \emph{tensor product} $\cb \otimes \cb'$ is defined naturally. An $\cb$-$\cb'$-bimodule is a dg module over $\cb \otimes \cb'$. A $\cb$-$\cb'$-bimodule $X$ gives rise to a pair of adjoint dg functors $T_X:\cc_{dg}\cb' \to \cc_{dg}\cb$ and $H_X:\cc_{dg}\cb \to \cc_{dg}\cb'$ (see KELLER deriving dg cats page 21). They define a pari of adjoint functors ${\bf L}H_X$ and ${\bf L}T_X$ at the level of derived categories.

\begin{definition}
A \emph{quasi-equivalence} is a dg functor $F:\cb\to \cb'$ such that $F(X,Y)$ is a quasi-isomorphism for all objects $X,Y$ of $\cb$ and the induced functor $H^0(F)$ is an equivalence.
\end{definition}

%Note that a $\cb$-$\cb'$bimodule $X$ induces a \emph{quasi-equivalence} if and only if ${\bf L}T_X$ is an equivalence giving rise to an equivalence in $H^0$.

\begin{example}
Each dg functor $F:\cb \to \cb'$ inducing an equivalence $H^{\ast}F:H^{\ast} \cb \to H^{\ast}\cb' $ induces a quasi-equivalence $X(B',B)=\cb'(B',B)$.
\end{example}

\begin{example}
If $A$ is a finite dimensional algebra of finite global dimension, then the equivalence $K^b(\proj A)\cong \cd^b(\mod A)$ lifts to quasi-equivalence of dg categories $C^b(\proj A)_{dg} \cong \cd^b(\mod A)_{dg}$.
\end{example}

\begin{remark}
If $k$ is a field, "quasi-equivalence" is an equivalence relation \cf \cite[Remark 7.2]{Keller deriving dg}. More concretely, if $T_X$
\end{remark}
\end{comment}
Let $\cb$ and $\cb'$ be two dg categories. A $\cb$-$\cb'$-bimodule $M$ is an object of $\cc_{dg}(\cb^{\text{op}}\otimes \cb')$, \ie $M$ is a left $\cb$-module and a right $\cb'$-module. Let $\rep(\cb, \cb')$ be the full subcategory of the derived category $\cd(\cb^{\text{op}}\otimes \cb')$ formed by the bimodules $X$ such that the derived tensor product
\begin{equation*}
?\lten_{\cb} X : \cd(\cb) \to \cd (\cb')
\end{equation*}
takes the representable $\cb$-modules to objects which are isomorphic to a representable $\cb'$-modules. 
\begin{remark}
Every object $X$ in $\cd(\cb^{\text{op}}\otimes \cb')$ is isomorphic to a \emph{cofibrant} object of $\cd(\cb^{\text{op}}\otimes \cb')$. Therefore, in practice we will always assume that every object in $\rep(\cb,\cb')$ is cofibrant and we will consider it as a bimodule in $\cc_{dg}(\cb^{\text{op}}\otimes \cb')$. In particular, we require that $X(B,?)$ is quasi-isomorphic to a representable $\cb'$-module for each object $B$ of $\cb$. 
The category of $\cb'$-modules which are quasi-isomorphic to a representable dg module is equivalent to $H^0(\cb')$. Therefore an object of $\rep(\cb,\cb')$ defines a functor
\begin{equation*}
H^0(\cb) \to H^0(\cb').
\end{equation*}
For this reason, the objects in $\rep(\cb,\cb')$ are called \emph{quasi-functors}. 
\end{remark}
The \emph{bimodule bicategory} $\rep$ has as objects all small dg categories; the morphism category between two objects $\cb$ and $\cb'$ is $\rep(\cb, \cb')$; the composition bifunctor
\begin{equation*}
\rep(\cb', \cb'') \times \rep(\cb,\cb')\to \rep(\cb,\cb'')
\end{equation*}
is given by the derived tensor product $(X,Y)\mapsto X\lten_{\cb'}Y$ . For each dg functor $F:\cb\to \cb'$,we have the dg bimodule
\begin{equation*}
M_{F}:(B,B') \mapsto \cb'(B',F(B))
\end{equation*}
which clearly belongs to $\rep(\cb,\cb')$. 
%One can show that the map $F \mapsto X_F$ induces a bijection, compatible with compositions, from the set of morphisms from $\cb$ to $\cb'$ in $\Ho(\dgcat)$ to the set of isomorphism classes of bimodules $X$ in $\rep(\cb,\cb')$.
\\

\subsection{Orbit categories.}

Let $\ca$ be a $k$-linear category and $F: \ca \to \ca$ be an automorphism. By definition, the orbit category $\ca/F$ has the same
objects as $\ca$ and the morphisms between two objects $X$ and $Y$ are given by 
\begin{equation*}
\ca / F(X,Y)= \bigoplus_{l\in \Z} \ca(X,F^l(Y)).
\end{equation*}	
The composition is given by the formula
\begin{equation}
\label{composition}
(f_a)\circ(g_b)=\left(\sum_{a+b=c}F^b(f_a)\circ g_b\right),
\end{equation} 
where $f_a:Y\to F^a(Z)$, $g_b:X\to F^b(Y)$ and $a,b\in \Z$. 
Let $p:\ca \to \ca/F$ be the canonical projection functor. It is endowed with a canonical isomorphism of functors $\phi: p \to p  \circ F$ given by $\phi_X=(\dots, 0,1_{F^{-1}(X)}, 0, \dots) $ for each object $X$ of $\ca$. Let $\ca'$ be another $k$-linear category. An $F$-invariant functor from $\ca$ to $\ca'$ is given by a pair $(G,\phi)$, where $G:\ca \to \ca'$ is a $k$-linear functor and $\phi: G\to G\circ F$ is an isomorphism of functors. A morphism of $F$-invariant functors $(G,\phi)\to (G',\phi')$  is given by a morphism of functors $\alpha : G \to G'$ such that the square
\begin{equation*}
\xymatrix{
G \ar[r]^{\phi}\ar[d]_{\alpha} & G F\ar^{\alpha F}[d]\\
G' \ar[r]^{\phi'} & G'  F 
}
\end{equation*}
commutes. In this way, we obtain the category $\inv_F (\ca,\ca')$ of $F$-invariant functors. In particular, $(p,\phi)$ is an $F$-invariant functor. The orbit category satisfies the following universal property.

\begin{theorem}
\thlabel{universal property}
$($\cite[Corollary 3.5]{Asashiba}. See also \cite{Keller triang corrections}$)$ Let $\fun_k(\ca/F,\ca')$ be the category of $k$-linear functors from $\ca/F$ to $\ca'$. The functor
\begin{equation*}
 \fun_k(\ca/F,\ca') \to \inv_F(\ca,\ca'),\ G\mapsto G \circ p
\end{equation*}
is an isomorphism of categories.
\end{theorem}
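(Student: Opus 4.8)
The plan is to exhibit an explicit inverse to the functor $G\mapsto G\circ p$ and to verify that the two constructions are mutually inverse \emph{strictly}, since the claim is an isomorphism of categories, not merely an equivalence. One half is already in place, because $(p,\phi)$ is itself an $F$-invariant functor and, for a $k$-linear $G$ and a natural transformation $\eta$, the pair $(G\circ p,\,G\phi)$ and the whiskering $\eta p$ manifestly satisfy the defining conditions of $\inv_F(\ca,\ca')$; the work is to go back.

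Given $(H,\psi)\in\inv_F(\ca,\ca')$, for $l\in\Z$ let $\psi^{(l)}_Y\colon H(Y)\iso H(F^l Y)$ denote the evident iterate of $\psi$ (a composite of copies of $\psi$ if $l\ge 0$, of $\psi^{-1}$ if $l<0$, with $\psi^{(0)}=\id$), so that the cocycle identity $\psi^{(a+b)}_Z=\psi^{(b)}_{F^a Z}\circ\psi^{(a)}_Z$ holds. Define $\widetilde{H}\colon\ca/F\to\ca'$ by $\widetilde{H}(X)=H(X)$ on objects and, on the summand $\ca(X,F^l Y)$ of $\ca/F(X,Y)$, by $f\mapsto(\psi^{(l)}_Y)^{-1}\circ H(f)$, extended $k$-linearly; this is well defined because a morphism of $\ca/F$ has only finitely many nonzero components. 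A morphism $\alpha\colon(H,\psi)\to(H',\psi')$ is sent to $\alpha$ itself; that this is natural as a transformation $\widetilde{H}\to\widetilde{H'}$ need only be tested on morphisms coming from $\ca$, where it is the naturality of $\alpha$, and on $\phi$, where it is the commuting square in the definition of a morphism of $F$-invariant functors.

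The one substantive point is that $\widetilde{H}$ respects the composition law \eqref{composition}. With $f_a\colon Y\to F^a Z$ and $g_b\colon X\to F^b Y$ one computes
\[
\widetilde{H}\bigl((f_a)\circ(g_b)\bigr)=\sum_{a,b}\bigl(\psi^{(a+b)}_Z\bigr)^{-1}\,H\bigl(F^b(f_a)\bigr)\,H(g_b),
\]
whereas
\[
\widetilde{H}\bigl((f_a)\bigr)\circ\widetilde{H}\bigl((g_b)\bigr)=\sum_{a,b}\bigl(\psi^{(a)}_Z\bigr)^{-1}\,H(f_a)\,\bigl(\psi^{(b)}_Y\bigr)^{-1}\,H(g_b);
\]
these agree after rewriting $H(f_a)\circ(\psi^{(b)}_Y)^{-1}=(\psi^{(b)}_{F^a Z})^{-1}\circ H(F^b(f_a))$ (naturality of $\psi^{(b)}$ applied to $f_a$) and then invoking the cocycle identity. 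Preservation of identities is immediate from $\psi^{(0)}=\id$. One then checks that $\widetilde{H}\circ p=H$ on the nose (only the degree-$0$ summand contributes and $\psi^{(0)}=\id$) and that $\widetilde{H}(\phi)=\psi$ (because $\phi_X$ is the appropriate identity morphism placed in precisely the graded component that $\widetilde{H}$ pairs with $\psi_X$); hence $(H,\psi)\mapsto\widetilde{H}$ is a section of $G\mapsto G\circ p$, and the analogous computation on morphisms shows it is a section there too.

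Finally, to see it is also a retraction — $\widetilde{G\circ p}=G$ for every $k$-linear $G\colon\ca/F\to\ca'$, and likewise for transformations — I would use that $\ca/F$ is generated by the image of $p$ together with $\phi$: unwinding \eqref{composition} shows that the morphism in $\ca/F(X,Y)$ concentrated in degree $l$ with value $f\in\ca(X,F^l Y)$ equals $(\phi^{(l)}_Y)^{-1}\circ p(f)$. Consequently a $k$-linear functor out of $\ca/F$, or a transformation between two of them, is determined by its restriction along $p$ and its value on $\phi$; applying this to $G$ and to $\widetilde{G\circ p}$, which share both, gives $\widetilde{G\circ p}=G$, and similarly on natural transformations. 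I expect the composition-compatibility computation for $\widetilde{H}$ to be the only real obstacle; everything else is bookkeeping, the only genuine care being to keep all identifications strict so as to land at an isomorphism of categories rather than an equivalence.
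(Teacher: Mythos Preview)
Your argument is correct and is essentially the standard proof of this universal property: construct the inverse by sending $(H,\psi)$ to the functor $\widetilde{H}$ that acts on the degree-$l$ summand by $f\mapsto(\psi^{(l)}_Y)^{-1}H(f)$, verify functoriality via naturality of $\psi$ and the cocycle identity, and observe that $\ca/F$ is generated by the image of $p$ together with $\phi$ so that the two constructions are strict inverses. The composition check and the decomposition of a degree-$l$ morphism as $(\phi^{(l)}_Y)^{-1}\circ p(f)$ are exactly the points where the argument has content, and you handle both correctly.

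There is nothing to compare against in the present paper: the statement is quoted from \cite{Asashiba} (and \cite{Keller triang corrections}) without proof, so the paper's ``own proof'' is simply the citation. Your write-up is in line with the proof one finds in those references.
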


\begin{remark}
We may suppose without any risk that $F$ is an autoequivalence. The theoretical justification for this can be found in Section 7 of \cite{Asashiba 2}.
\end{remark}

The dg orbit category associated to a dg category $\cb$ and a quasi-functor in $\rep(\cb,\cb)$ is defined by a universal property. This property can be thought of as a lift of \thref{universal property} to the dg world (see \thref{relation between universal properties} below). To state properly the universal property of the dg orbit categories we need to introduce the following definition. 
%We are particularly interested in studying the case where $\ct/F$ and $\ct'$ are algebraic triangulated categories. 

\begin{definition}
Suppose that $\cb$ is small and that $F\in \rep(\cb,\cb)$ is given by a cofibrant bimodule. For a dg category $\cb'$, define $\widetilde{\eff}(\cb, F, \cb')$ to be the category whose objects are given by pairs $(P,\phi)$  where
\begin{itemize}
\item $P$ is a quasi-functor in $\rep (\cb,\cb')$,
\item $\phi:P\to P F$ is a quasi-isomorphism of dg bimodules,
\item the morphisms from $(P,\phi)$ to $(P',\phi')$ are obtained the morphisms $f:P \to P'$ of dg bimodules, such that $\phi' \circ f = (f F)\circ \phi$ in the category of dg bimodules. In other words, the following diagram commutes
\begin{equation*}
\xymatrix{
P \ar[r]^{\phi}\ar[d]_{f} & PF \ar[d]^{fF}\\
P' \ar[r]^{\phi'} & P'F.
}
\end{equation*}
\end{itemize}
\end{definition}
Let $\eff (\cb, F, \cb')$ be the localization of $\widetilde{\eff}(\cb, F, \cb')$ with respect to the morphisms $f$ which are quasi-isomorphisms of dg bimodules. 
\begin{remark}
The name $\eff$ comes from the french word \emph{effa\c{c}able} which means erasable. 
\end{remark}
\begin{theorem}$($\cite[Theorem 3 (b)]{Keller triang}$)$
\thlabel{dg universal property}
Let $\cb$ a dg category and $F\in\rep(\cb,\cb)$. Then the $2$-functor
$\eff(\cb,F,?)$ is $2$-representable, \ie there exist a dg category $\cb/F$ and a pair $(P_0, \phi_0)$ in $\eff(\cb,F,\cb/F)$ such that for every small dg category $\cb'$, the functor
\begin{equation*}
 \rep(\cb/ F, \cb') \rightarrow \eff(\cb,F,\cb'),\ G \mapsto G\circ P_0
\end{equation*}
is an equivalence.
\end{theorem}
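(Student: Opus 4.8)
The plan is to construct the dg category $\cb/F$ together with the pair $(P_0,\phi_0)$ explicitly, and then verify the universal property by producing a quasi-inverse to the functor $G\mapsto G\circ P_0$. First I would normalise the data: arguing as in the $1$-categorical situation (\cf the Remark following \thref{universal property}), one may reduce to the case in which the quasi-functor $F$ is invertible and is represented by a cofibrant $\cb$-$\cb$-bimodule. Concretely, replace $\cb$ by the homotopy colimit $\cb_\infty$ of the tower $\cb\xrightarrow{F}\cb\xrightarrow{F}\cdots$ in $\dgcat$, on which $F$ induces an autoequivalence $F_\infty$, and check that $\eff(\cb,F,?)$ is equivalent to the analogous $2$-functor for $(\cb_\infty,F_\infty)$ because every pair $(P,\phi)$ factors essentially uniquely through $\cb_\infty$. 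After this reduction the powers $F^{\ten l}:=F\ten_\cb\cdots\ten_\cb F$ are defined for all $l\in\Z$ (with $F^{\ten 0}=\cb$ and $F^{\ten l}$ for $l<0$ a cofibrant model of the inverse bimodule), and $F^{\ten l}\ten_\cb -$ computes the derived tensor product on cofibrant modules.

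Now the construction. Let $\cb/F$ have the same objects as $\cb$ and morphism complexes
$$(\cb/F)(X,Y)=\bigoplus_{l\in\Z}\Hom_\cb\!\bigl(X^{\wedge},\,F^{\ten l}\ten_\cb Y^{\wedge}\bigr),\qquad X^{\wedge}:=\cb(-,X),$$
with componentwise differential and with composition defined, in exact analogy with formula \eqref{composition}, by applying the dg functor $F^{\ten b}\ten_\cb -$ to the first factor and then composing in $\cc_{dg}(\cb)$. Take $P_0\colon\cb\to\cb/F$ to be the identity on objects and the inclusion of the $l=0$ summand $\cb(X,Y)=\Hom_\cb(X^{\wedge},Y^{\wedge})$ on morphisms, and take $\phi_0\colon P_0\iso P_0\circ F$ to be the canonical quasi-isomorphism, which at the level of bimodules identifies $\bigoplus_{l}\Hom_\cb(Z^{\wedge},F^{\ten l}\ten_\cb X^{\wedge})$ with its reindexing by $l\mapsto l+1$; since $F$ is invertible this is an isomorphism, so $(P_0,\phi_0)\in\eff(\cb,F,\cb/F)$.

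It remains to show that $\Phi\colon\rep(\cb/F,\cb')\to\eff(\cb,F,\cb')$, $G\mapsto(G\circ P_0,\phi_G)$ with $\phi_G$ induced by $\phi_0$, is an equivalence; it visibly sends quasi-isomorphisms to isomorphisms, hence descends to the localization defining $\eff$. I would build a quasi-inverse $\Psi$ by hand: given $(P,\phi)$ with $P$ a cofibrant $\cb$-$\cb'$-bimodule, iterate $\phi$ to obtain quasi-isomorphisms $\phi^{(l)}\colon P\iso P\ten_\cb F^{\ten l}$ for all $l\in\Z$, and let $\Psi(P,\phi)$ be the $\cb/F$-$\cb'$-bimodule whose value at $(Z,X')$ is $P(Z,X')$, with the right $\cb'$-action inherited from $P$ and with the $l$-th summand of $(\cb/F)(Z,Y)$ acting on $P(Y,-)$ by transporting the $\cb$-action of $P$ along $\phi^{(l)}$; this is exactly the data required, because $\cb/F$ is $\cb$ equipped with a universal isomorphism $\phi_0$ between the canonical functor and its $F$-twist. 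Since $\Psi(P,\phi)(Z,-)=P(Z,-)$ is quasi-isomorphic to a representable $\cb'$-module, $\Psi(P,\phi)$ lies in $\rep(\cb/F,\cb')$, and $\Psi$ manifestly preserves quasi-isomorphisms, so it too passes to the localization. One then checks that the evident comparison morphisms $\Phi\Psi\to\id$ and $\Psi\Phi\to\id$ are natural quasi-isomorphisms; after evaluation on objects this reduces to the co-Yoneda lemma and to unwinding the definition of the $\cb/F$-action.

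The real difficulty lies not in these formal manipulations but in two technical points. First, one must actually carry out the reduction of the first step and prove that it does not alter the $2$-functor $\eff(\cb,F,?)$. Second, one must keep the $2$-categorical and the dg level compatible: the bimodule $\Psi(P,\phi)$ must be made literally well defined, which forces one to fix cofibrant resolutions once and for all so that every derived tensor product occurring is computed by the underived one; and the coherence isomorphisms of the bicategory $\rep$ — in which $G\circ P_0$ is only defined up to canonical isomorphism — must be tracked so that $\Phi$ and $\Psi$ become genuine mutually quasi-inverse functors between ordinary categories. Once $F$ has been replaced by an invertible model the verification that $\phi_0$ is a quasi-isomorphism is immediate; it is that replacement, and this coherence bookkeeping, that carry the weight of the argument.
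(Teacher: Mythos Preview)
The paper does not prove this theorem: it is quoted verbatim from \cite[Theorem~3(b)]{Keller triang} and no argument is given. What the paper does supply, immediately after the statement, is the explicit model of $\cb/F$ in the special case where $F$ is a dg endomorphism of $\cb$ inducing an equivalence on $H^0$: the objects are those of $\cb$ and
\[
\cb/F(X,Y)=\colim_p\bigoplus_{n\ge 0}\cb(F^nX,F^pY),
\]
with transition maps given by $F$. This is the construction actually used downstream in the paper (for $\cb=\ca c(\cp)_{dg}$ and $F=\widetilde F$), and the universal property itself is taken on trust.

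Your sketch is a reasonable outline of a proof and lands on a model quasi-equivalent to the one above, but the route is different. You first force $F$ to be invertible by passing to a homotopy colimit $\cb_\infty$, and then write the hom-complexes as $\bigoplus_{l\in\Z}\Hom_\cb(X^\wedge,F^{\ten l}\ten_\cb Y^\wedge)$. The paper's (that is, Keller's) construction avoids that preliminary replacement entirely: the colimit over $p$ together with the sum over $n\ge 0$ already inverts $F$ \emph{inside} the hom-complexes, so one never has to change $\cb$. Your approach trades that for a cleaner bimodule formalism at the cost of the extra reduction step, whose compatibility with $\eff(\cb,F,?)$ you flag but do not carry out.

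Two small points worth tightening in your argument. First, when you let the $l$-th summand of $(\cb/F)(Z,Y)$ act on $P(Y,-)$ via $\phi^{(l)}$, you should record the cocycle identity $\phi^{(l+m)}=(\phi^{(l)}F^{\ten m})\circ\phi^{(m)}$, which is what makes this an associative $\cb/F$-action; it holds by construction of the iterates but is the content of ``this is exactly the data required''. Second, functoriality of $\Psi$ on $\widetilde{\eff}$ requires that a morphism $f:(P,\phi)\to(P',\phi')$ intertwine all the iterates $\phi^{(l)}$ and $\phi'^{(l)}$; this follows by induction from the strict square $\phi' f=(fF)\phi$, and is what lets $\Psi$ descend to the localisation. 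None of this is a gap, but since the paper gives no proof to compare against, these are the places where a referee would ask you to say one more sentence.
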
 

We call $\cb/F$ the \emph{dg orbit category} associated to $\cb$ and $F$. If $\cb$ is a dg category endowed with an endomorphism $F:\cb\to \cb$ inducing an equivalence $H^0(F):H^0(\cb)\to H^0(\cb)$, then $\cb/F:=\cb/M_{F}$ can be described explicitly as follows: The objects of $\cb/F$ are the same as the objects of $\cb$. For $X, Y\in \cb/F$, we have
\begin{equation}
\cb / F(X,Y):= \colim_p\bigoplus_{n \geq 0}\cb (F^n(X),F^p(Y)),
\end{equation}
where the transitions maps are given by $F$

\begin{equation*}
\xymatrix{
\displaystyle{\bigoplus_{n\geq 0}}\cb (F^n(X),F^p(Y))\ar^{F}[r] & \displaystyle{\bigoplus_{n\geq 0}}\cb (F^n(X),F^{p+1}(Y)).
}
\end{equation*} 

Combining \thref{dg universal property} with the universal property of the pretriangulated hull we obtain the following universal property:

\begin{theorem}$($\cite[Theorem 4]{Keller triang}$)$
\thlabel{pretr dg universal property}
Let $\cb$ be a pretriangulated dg category and $F\in\rep (\cb,\cb)$. Then for any pretriangulated dg category $\cb'$ there is an equivalence of categories
\begin{equation*}
\xymatrix{
 \rep(\pretr (\cb/F), \cb') \ar^{\phantom{xxl}\sim}[r]& \eff(\cb,F,\cb').
}
\end{equation*}
\end{theorem}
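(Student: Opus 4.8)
The plan is to realize the asserted equivalence as a composite of two equivalences. The first one comes directly from \thref{dg universal property}: it produces a dg category $\cb/F$ and a pair $(P_0,\phi_0)\in\eff(\cb,F,\cb/F)$ for which precomposition with $P_0$ yields an equivalence
\begin{equation*}
\rep(\cb/F,\cb')\iso\eff(\cb,F,\cb'),\qquad G\longmapsto G\circ P_0,
\end{equation*}
for every small dg category $\cb'$. The second equivalence I would establish is: when $\cb'$ is pretriangulated, composition with the canonical dg functor $\iota\colon\cb/F\to\pretr(\cb/F)$ induces an equivalence
\begin{equation*}
\rep(\pretr(\cb/F),\cb')\iso\rep(\cb/F,\cb'),\qquad G\longmapsto G\circ\iota .
\end{equation*}
Granting this, the composite $G\mapsto (G\circ\iota)\circ P_0=G\circ(\iota\circ P_0)$ is the desired equivalence $\rep(\pretr(\cb/F),\cb')\iso\eff(\cb,F,\cb')$, the universal pair being the image of $(P_0,\phi_0)$ under composition with $\iota$.

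The real work is the second equivalence, and the delicate point is that the universal property of the pretriangulated hull is formulated in terms of genuine dg functor categories, $\ch om(\cb/F,-)\to\ch om(\pretr(\cb/F),-)$, whereas the objects of interest here are quasi-functors. To bridge this I would replace the target by a quasi-equivalent pretriangulated dg category of modules: let $\widetilde{\cb'}$ be the full dg subcategory of $\cc_{dg}(\cb')$ spanned by the cofibrant dg $\cb'$-modules that are quasi-isomorphic to a representable one. The Yoneda embedding is a quasi-equivalence $\cb'\iso\widetilde{\cb'}$, and since $\cb'$ is pretriangulated, translations and cones of quasi-representable modules are again quasi-representable, so $\widetilde{\cb'}$ is closed under shifts and cones inside the (strongly) pretriangulated dg category $\cc_{dg}(\cb')$ and is therefore itself pretriangulated, \cf \cite{Keller on dg} and \thref{example pretriangulated}. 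On cofibrant objects one has, essentially by definition of $\rep$, a natural equivalence $\rep(\cn,\cb')\simeq H^0\bigl(\ch om(\cn,\widetilde{\cb'})\bigr)$. Applying this to cofibrant models of $\cb/F$ and of $\pretr(\cb/F)$, invoking the universal property of $\pretr$ with the pretriangulated target $\widetilde{\cb'}$, namely the quasi-equivalence $\ch om(\cb/F,\widetilde{\cb'})\iso\ch om(\pretr(\cb/F),\widetilde{\cb'})$, and passing to $H^0$, yields the claimed equivalence $\rep(\pretr(\cb/F),\cb')\iso\rep(\cb/F,\cb')$, with quasi-inverse given by composition with $\iota$.

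I expect the only genuine obstacle to be this passage from dg functors to quasi-functors: one must make the identification $\rep(\cn,\cb')\simeq H^0(\ch om(\cn,\widetilde{\cb'}))$ precise, check that it is natural in $\cn$ and compatible with composition with $\iota$, and be careful to replace $\cb/F$ and $\pretr(\cb/F)$ by cofibrant ($h$-projective) models, verifying along the way that $\pretr$ is compatible with such a replacement up to quasi-equivalence. These are standard facts about dg categories, for which I would refer to \cite{Keller on dg}, \cite{Drinfeld} and \cite{Tabuada invariants}; no new idea beyond \thref{dg universal property} and the construction of $\pretr$ enters, and once these bookkeeping points are recorded the theorem follows by the composition above.
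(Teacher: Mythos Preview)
Your proposal is correct and follows exactly the approach the paper indicates: the paper does not give a formal proof but simply states, in the sentence preceding the theorem, that it is obtained by ``combining \thref{dg universal property} with the universal property of the pretriangulated hull'' (and cites \cite[Theorem 4]{Keller triang}). Your two-step decomposition is precisely this combination, and you go further than the paper by spelling out the passage from the dg-functor formulation of the universal property of $\pretr$ to the quasi-functor level via quasi-representable modules --- a point the paper leaves implicit.
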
 

\begin{remark}
\thlabel{relation between universal properties}
If $(P,\phi)\in \eff(\cb,F,\cb') $ then $H^0(P)$ is an $H^0(F)$-invariant functor. By \thref{universal property} $H^0(P)$ induces a functor $G:H^0(\cb)/H^0(F)\to H^0(\cb')$. By \thref{dg universal property} $(P,\phi)$ corresponds to a quasi-functor $\tilde{G}$ such that $H^0(\tilde{G})=G$.
\end{remark}

The dg orbit category is functorial in $(\cb,F)$ in the following sense.

\begin{lemma}
\thlabel{functoriality of dg orbit}
$($\cf \cite[Section 9.4]{Keller triang}$)$ Let 
\begin{equation*}
\xymatrix{
\cb \ar[r]^X\ar[d]_{F} & \cb' \ar[d]^{F'}\\
\cb \ar[r]^X & \cb' 
}
\end{equation*}
be an square in $\rep$ and let $\gamma : F'X \to XF$ be an isomorphism in $\rep(\cb,\cb')$. Then there is a morphism $\ol{X}: \cb / F \to \cb' / F'$ such that if 
\begin{equation*}
\xymatrix{
\cb' \ar[r]^{X'}\ar[d]_{F'} & \cb'' \ar[d]^{F''}\\
\cb' \ar[r]^{X'} & \cb' 
}
\end{equation*}
is another diagram in $\rep$ as above, then the quasi-functors $\ol{X'}\circ \ol{X}$ and $\ol{X'\circ X}$ are isomorphic. 
%In particular, if $X$ is a Morita equivalence, then $\cb / F$ is quasi-equivalent to $\cb' / F'$.
\end{lemma}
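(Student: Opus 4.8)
The plan is to read everything off from the universal property \thref{dg universal property}. Denote by $(P_0,\phi_0)\in\eff(\cb,F,\cb/F)$, $(P_0',\phi_0')\in\eff(\cb',F',\cb'/F')$ and $(P_0'',\phi_0'')\in\eff(\cb'',F'',\cb''/F'')$ the universal pairs, all bimodules chosen cofibrant as usual. By \thref{dg universal property}, to construct $\ol X\colon\cb/F\to\cb'/F'$ it suffices to produce an object of $\eff(\cb,F,\cb'/F')$. I would take the quasi-functor $P:=P_0'\lten_{\cb'}X\in\rep(\cb,\cb'/F')$ together with the invariance datum defined as the composite
\begin{equation*}
P=P_0'\lten_{\cb'}X\xrightarrow{\ \phi_0'\lten\id_X\ }P_0'\lten_{\cb'}F'\lten_{\cb'}X\xrightarrow{\ \id_{P_0'}\lten\gamma\ }P_0'\lten_{\cb'}X\lten_{\cb}F=PF,
\end{equation*}
where the associativity constraints of the bicategory $\rep$ are suppressed from the notation. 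This is a quasi-isomorphism because $\phi_0'$ is one and $\gamma$ is invertible, using that the derived tensor product of cofibrant bimodules preserves quasi-isomorphisms in each variable. Hence $(P,\phi)\in\eff(\cb,F,\cb'/F')$, and \thref{dg universal property} provides a quasi-functor $\ol X\in\rep(\cb/F,\cb'/F')$ together with an isomorphism $\ol X\circ P_0\cong(P,\phi)$ in $\eff(\cb,F,\cb'/F')$. This $\ol X$ is the sought morphism.

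For the compatibility statement, let $\gamma'\colon F''X'\iso X'F'$ be the comparison isomorphism attached to the second square and let $\ol{X'}\in\rep(\cb'/F',\cb''/F'')$ be constructed as above from $(P_0'',\phi_0'')$, $X'$ and $\gamma'$; it comes with an isomorphism $\ol{X'}\circ P_0'\cong(P_0''\lten_{\cb''}X',\psi')$ in $\eff(\cb',F',\cb''/F'')$, where $\psi'$ is the composite built from $\phi_0''$ and $\gamma'$. Applying $?\lten_{\cb'}X$ and using associativity yields
\begin{equation*}
\ol{X'}\circ\ol X\circ P_0\cong\ol{X'}\circ(P_0'\lten_{\cb'}X)\cong(\ol{X'}\circ P_0')\lten_{\cb'}X\cong(P_0''\lten_{\cb''}X')\lten_{\cb'}X\cong P_0''\lten_{\cb''}X'\lten_{\cb'}X
\end{equation*}
in $\rep(\cb,\cb''/F'')$. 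On the other hand, form $\ol{X'X}$ from the composite square, whose top arrow is $X'\lten_{\cb'}X$ and whose comparison isomorphism is the natural composite $\gamma'':=(\id_{X'}\lten\gamma)\circ(\gamma'\lten\id_X)\colon F''X'X\iso X'XF$. Unwinding its construction, $\ol{X'X}\circ P_0$ is isomorphic in $\eff(\cb,F,\cb''/F'')$ to $P_0''\lten_{\cb''}X'\lten_{\cb'}X$ equipped with the invariance datum built from $\phi_0''$ and $\gamma''$. A direct computation then shows that the invariance datum transported onto $P_0''\lten_{\cb''}X'\lten_{\cb'}X$ through the chain of isomorphisms displayed above is precisely this one, so $\ol{X'}\circ\ol X\circ P_0$ and $\ol{X'X}\circ P_0$ are isomorphic as objects of $\eff(\cb,F,\cb''/F'')$. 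Since by \thref{dg universal property} the functor $G\mapsto G\circ P_0$ is an equivalence, in particular fully faithful, it reflects isomorphisms; therefore $\ol{X'}\circ\ol X\cong\ol{X'X}$.

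The only genuinely technical point is this last direct computation: one must chase the invariance data $\phi_0,\phi_0',\phi_0'',\gamma,\gamma'$ through the coherence isomorphisms (associativity and units) of the bicategory $\rep$ and through the structure isomorphism $\ol{X'}\circ P_0'\cong(P_0''\lten_{\cb''}X',\psi')$, checking at each step that one stays inside $\eff(\cb',F',-)$, respectively $\eff(\cb,F,-)$, i.e.\ that the relevant squares of the shape occurring in the definition of $\widetilde{\eff}$ commute. This is the standard coherence bookkeeping underlying any functoriality statement in a bicategory and carries no surprises; by contrast, the existence of $\ol X$ itself is immediate from the universal property once $(P,\phi)$ has been written down.
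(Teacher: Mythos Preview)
The paper does not give its own proof of this lemma; it is stated with a citation to \cite[Section 9.4]{Keller triang} and used as a black box. Your argument is the natural one and is correct: you produce $(P,\phi)\in\eff(\cb,F,\cb'/F')$ from the universal pair $(P_0',\phi_0')$ and the comparison isomorphism $\gamma$, invoke \thref{dg universal property} to obtain $\ol X$, and then verify compatibility by comparing $\ol{X'}\circ\ol X\circ P_0$ with $\ol{X'X}\circ P_0$ inside $\eff(\cb,F,\cb''/F'')$ and using that the equivalence $G\mapsto G\circ P_0$ reflects isomorphisms. This is precisely the strategy one extracts from Keller's construction, so there is nothing to compare against in the present paper beyond noting that your approach matches the cited source.

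One small point worth making explicit in your write-up: in the chain of isomorphisms displayed for $\ol{X'}\circ\ol X\circ P_0$, the step $\ol{X'}\circ(P_0'\lten_{\cb'}X)\cong(\ol{X'}\circ P_0')\lten_{\cb'}X$ is an instance of associativity in $\rep$, but to use it inside $\eff(\cb,F,\cb''/F'')$ you are implicitly using that postcomposition with $X$ sends the morphism category $\eff(\cb',F',\cb''/F'')$ to $\eff(\cb,F,\cb''/F'')$ (via $\gamma$), so that the structural isomorphism $\ol{X'}\circ P_0'\cong(P_0''\lten_{\cb''}X',\psi')$ transports correctly. You allude to this in your final paragraph, and it is indeed routine coherence bookkeeping, but since the definition of morphisms in $\widetilde{\eff}$ requires a commuting square, it is the one place where a careless reader could lose the thread.
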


\begin{definition}
Let $\ct=H^0(\cb)$ be an algebraic triangulated category, and $\widetilde{F}:\cb\to \cb$ a dg functor inducing an equivalence $F:\ct \to \ct$. Then the \emph{triangulated hull} of $\ct/F$ is defined as the triangulated category
\begin{equation*}
H^0(\pretr (\cb/ \widetilde{F})).
\end{equation*}
\end{definition}

\begin{definition}
Let $\ct$ be a triangulated category endowed with an autoequivalence $F:\ct \to \ct $. Suppose that $\ct_{dg}$ is a dg enhancement of $\ct$ and that $\widetilde{F} \in \rep (\ct_{dg},\ct_{dg})$ is a dg lift of $F$. The triangulated hull of $(\ct, F)$ with respect to $\ct_{dg}$ is the triangulated category $H^0(\pretr({\ct_{dg}/\widetilde{F}}))$. We say that $\ct/F$ is triangulated with respect to $(\ct_{dg}, \tilde{F})$ if $\ct/F$ is equivalent to the triangulated hull defined by $\ct_{dg}$ and $\widetilde{F}$.
\end{definition}

\begin{remark}
Let $\ch$ be a hereditary abelian category and $F:\cd^b(\ch)\to \cd^b(\ch)$ an autoequivalence. In \cite{Keller triang}, Keller gives sufficient conditions on $F$ to ensure that $\cd^b(\ch)/F$ is triangulated with respect to $\cd^b(\ch)_{dg}$. 

\begin{comment}
 let $\cd^b(\ch)_{dg}$ be the dg bounded derived category associated to $\ch$ (see \cite[Section 9.8]{Keller triang}). In \cite{Keller triang}, Keller studies the dg orbit category associated to 
$\cd^b(\ch)_{dg}$ 
and a quasi-functor $\tilde{F}\in \rep (\cd^b(\ch),\cd^b(\ch))$. 
He gives sufficient conditions on $H^0(\tilde{F})$ to ensure that $\cd^b(\ch)_{dg} / \tilde{F}$ is pretriangulated, or in other words, that $\cd^b(\ch)/H^0(\tilde{F})$ is equivalent to its triangulated hull \cf \cite[Theorem 1 and Theorem 6]{Keller triang}. 
\end{comment}
\end{remark}

\section{The ambient Frobenius category}

\subsection{Frobenius categories and Gorenstein-projective modules} In this subsection we recall a general result on Frobenius categories due to Chen \cite{Chen}. It allows us to embed any Frobenius category $\ce$ into a module category (over an additive category). More precisely, let $\cp $ be the full subcategory of $\ce$ formed by its projective objects, then $\ce$ is equivalent, as an exact category, to an extension-closed exact subcategory of $\gpr(\cp)$, the category of finitely-generated Gorenstein-projective (or maximal Cohen-Macaulay) modules over $\cp$. This theorem is crucial for our construction of an ambient Frobenius category in which the orbit category $\ce/F$ embeds.

%Let $\ca$ be a category with kernels and enough projectives. Let $\cdots \to P_1\to P_0 \to M \to 0$ be a projective resolution of an object $M$ in $\ca$. The \emph{syzygy} of $M$ is $\Omega M:=\ker(P_0\to M)$. For $i\geq 2$, the $i$th \emph{syzygy} of $M$ is $\Omega^iM:=\ker(P_{i-1}\to P_{i-2})$, or equivalently, the composition $\Omega \circ \cdots \circ \Omega$ of $\Omega$ $i$ times. If $\ca$ is a category with cokernels, enough injectives and $0 \to M \to I_0\to I_1  \to \cdots$ is an injective co-resolution of an object $M$, the \emph{suspensions} (or \emph{cosyzygyies}) of $M$ are defined analogously and denoted by $\Sigma^iM$.	

\begin{comment}
\begin{remark}
If $\ca$ is an abelian category with enough projectives, then the $i$th syzygy defines a functor $\Omega^{i}: \ul{\ca} \to \ul{\ca}$. Moreover $\Omega \circ \cdots \circ \Omega = \Omega^i$. There is an analogous statement for abelian categories with enough injectives. If $\ce$ is a Frobenius category, then the functor $\Omega: \ul{\ce}\to \ul{\ce}$ is invertible and $\Sigma: \ul{\ce}\to \ul{\ce}$ is an inverse.
\end{remark}
\end{comment}

\begin{notation}
\thlabel{notation proj}
Let $\ca$ be an additive $\Z$-category. We denote by $\Mod (\ca)$ the category of all right modules over $\ca$ and by $\mod (\ca)$ its full subcategory of finitely presented modules. We let $\proj(\ca)$ be the full subcategory of $\mod (\ca)$ formed by the finitely-generated projective $\ca$-modules. 
%Then $\proj (\ca)$ is the full subcategory of $\mod(\ca)$ whose objects are the representable functors. In particular, every object of $\proj(\ca)$ is isomorphic to a representable functor and an $\ca$-module $M$ belongs to $\mod(\ca)$ if and only if there is an epimorphism $P\to M$ from an object $P$ of $\proj(\ca)$.
\end{notation}

\begin{definition}
\thlabel{gpr}
An $\ca$-module $M$ is \emph{finitely generated Gorenstein projective} if there is an acyclic complex
\begin{equation*}
P_M: \cdots \to P_1 \to P_0 \to P^{0} \to P^{1} \to \cdots
\end{equation*}
of objects in $\proj(\ca)$ such that $M \cong \cok (P_1\to P_0)$ and the complex $\Hom_{\ca}(P_M, P')$ is still acyclic for each module $P'$ in $\proj(\ca)$. Denote by $\gpr (\ca)$ the full subcategory of $\mod (\ca)$ formed by the Gorenstein projective modules. In the situation described above we call $P_M$ a \emph{complete projective resolution} of $M$.
\end{definition}
Notice that every finitely generated projective $\ca$-module $P$ lies in $\gpr(\ca)$, since we may take its complete resolution as $\cdots \to 0 \to P \overset{\sim}{\longrightarrow}  P \to 0 \to \cdots$.
\begin{lemma}
\thlabel{gpr is Frobenius}
The category $\gpr(\ca)$ is a Frobenius category whose subcategory of projective--injective objects is $\proj(\ca)$, the category of finitely-generated projective $\ca$-modules.
\end{lemma}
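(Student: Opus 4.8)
**Proof plan for "The category $\gpr(\ca)$ is a Frobenius category whose subcategory of projective-injective objects is $\proj(\ca)$".**

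The plan is to establish three things: that $\gpr(\ca)$ is an extension-closed exact subcategory of $\mod(\ca)$ (hence is exact), that it has enough projectives and enough injectives, and that in both cases these coincide with $\proj(\ca)$.

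First I would fix the exact structure: $\gpr(\ca)$ inherits from $\mod(\ca)$ (or from $\Mod(\ca)$) the conflations consisting of short exact sequences all of whose terms are Gorenstein-projective. The key closure property is that $\gpr(\ca)$ is closed under extensions, kernels of epimorphisms, and cokernels of admissible monomorphisms inside $\mod(\ca)$; the standard way to see this is via the syzygy/cosyzygy description. Concretely, $M$ is Gorenstein-projective iff $M$ has an infinite projective resolution $\cdots \to P_1 \to P_0 \to M \to 0$ and embeds into a projective with all cosyzygies having vanishing higher $\Ext$ into $\proj(\ca)$; more usefully, $M \in \gpr(\ca)$ iff $\Ext^i_{\ca}(M,\proj(\ca)) = 0$ for all $i>0$ and $M$ admits a right $\proj(\ca)$-resolution (the positive half of the complete resolution). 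Given a short exact sequence $0 \to M' \to M \to M'' \to 0$ with two of the three terms in $\gpr(\ca)$, the long exact sequence for $\Ext^*_{\ca}(-,P')$ forces the vanishing condition on the third, and a horseshoe-type argument (splicing the positive parts, then the negative parts using that the relevant $\Ext^1$ vanishes) produces a complete projective resolution of the third term; the only mildly delicate point is handling the case where $M''$ is the unknown term, since there one must also recover the right resolution. Closure under kernels of deflations is the special case, and closure of $\proj(\ca)$ inside $\gpr(\ca)$ under all of this is immediate since projectives are injective-like for this $\Ext$ condition.

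Next I would show $\proj(\ca)$ consists of projective-injective objects of $\gpr(\ca)$. Projectivity of $P \in \proj(\ca)$ in $\gpr(\ca)$: any deflation $M \twoheadrightarrow N$ in $\gpr(\ca)$ is in particular an epimorphism in $\mod(\ca)$, so $\Hom(P,-)$ is exact on it. Injectivity of $P$ in $\gpr(\ca)$: given an inflation $M' \hookrightarrow M$ in $\gpr(\ca)$ with cokernel $M'' \in \gpr(\ca)$, the sequence $0 \to \Hom(M'',P) \to \Hom(M,P) \to \Hom(M',P) \to \Ext^1_{\ca}(M'',P) = 0$ is exact precisely because $M'' \in \gpr(\ca)$ has $\Ext^1$ into $\proj(\ca)$ vanishing — this is where the Gorenstein-projective hypothesis is used essentially, and it is the conceptual heart of the argument.

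Finally I would show $\gpr(\ca)$ has enough projectives and enough injectives, both realized by $\proj(\ca)$. Enough projectives: for $M \in \gpr(\ca)$, take $P_0 \twoheadrightarrow M$ from a complete projective resolution; its kernel $\Omega M$ is the next syzygy, which lies in $\gpr(\ca)$ by the closure established above (it is a kernel of a deflation between Gorenstein-projectives), giving a conflation $\Omega M \hookrightarrow P_0 \twoheadrightarrow M$. Enough injectives: dually, use the map $M \hookrightarrow P^0$ from the negative part of the complete resolution, with cokernel the cosyzygy $\Omega^{-1}M \in \gpr(\ca)$, giving a conflation $M \hookrightarrow P^0 \twoheadrightarrow \Omega^{-1}M$; here one checks $M \hookrightarrow P^0$ really is an inflation (admissible mono) in the exact structure, which follows because its cokernel is again Gorenstein-projective. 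Combining, every object fits into conflations with middle term in $\proj(\ca)$ on both sides, and by the previous paragraph these middle terms are projective-injective, so $\gpr(\ca)$ is Frobenius with $\proj(\ca)$ as its projective-injectives. I expect the main obstacle to be the horseshoe/splicing argument for closure under extensions in the case where the cokernel term is the one to be shown Gorenstein-projective — this is the place where one must be careful that the constructed two-sided resolution is genuinely acyclic and stays acyclic after applying $\Hom_{\ca}(-,P')$.
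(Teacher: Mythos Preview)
Your proposal is correct and follows essentially the same route as the paper: establish that $\gpr(\ca)$ is extension-closed (hence exact), show $\proj(\ca)$ is projective and injective via $\Ext^1_{\ca}(M,P)=0$ for $M$ Gorenstein-projective, and extract enough projectives and injectives from the two halves of a complete resolution. The only differences are cosmetic: the paper outsources extension-closedness to \cite[Proposition~5.1]{AR} rather than proving it by horseshoe splicing, and it observes directly that the syzygies $Z^i(P_M)$ lie in $\gpr(\ca)$ (since a shift of a complete resolution is again one) rather than invoking a two-out-of-three closure property --- your detour through closure under kernels of deflations is unnecessary but harmless.
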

\begin{proof}
By \cite[Proposition 5.1]{AR} the category $\gpr (\ca)$ is an extension-closed subcategory of $\Mod(\ca)$ and thus, it is an exact category. Let $P$ and $P'$ be finitely-generated projective $\ca$-modules. A complex of the form $(\cdots \to 0 \to P=P \to 0 \to \cdots)$ is acyclic and remains acyclic after applying the functor $\Hom (?, P')$. Therefore $\proj (\ca)$ identifies with the subcategory of projective objects of $\gpr(\ca)$. Let $M$ be a module in $\gpr(\ca)$ and $P_M = (\cdots \to P_1 \to P_0 \to P^{0} \to P^{1} \to \cdots)$ a complete resolution of $M$. Since the complex $\Hom (P_M, P)$ is acyclic, we have that $\Ext_{\ca}^1(M,P)=0$. Therefore $P$ is also injective in $\gpr(\ca)$. The sequences $0\to Z^{-1}(P_M) \to P_0 \to M \to 0 $ and $0\to M\to P^0 \to Z^1(P_M) \to 0 $ are short exact sequences of $\mod(\cp)$ which lie in $\gpr(\cp)$.  Therefore $\gpr(\ca)$ has enough projectives and enough injectives. Moreover each injective object in $\gpr(\ca)$ must be projective too. This completes the proof.
\end{proof}

\begin{definition}
\thlabel{restricted Yoneda}
Let $\cv$ be a subcategory of $\ca$. Then the assignment
\begin{equation*}
X\mapsto \ca(?,X)|_{\cv}
\end{equation*}
induces a functor $\ca \rightarrow \Mod(\cv)$, which we call the \emph{restricted Yoneda functor}. 
\end{definition}

\begin{lemma}
\thlabel{one lemma}
Let $\ce$ be an exact category and let $\cp$ be its full subcategory of projective objects. Let $0\to A \to B \to C \to 0$ be an exact sequence in $\ce$. Then the induced sequence
\begin{equation*}
\xymatrix{
0 \ar[r] & \ce(?,A)|_{\cp} \ar[r] & \ce(?,B)|_{\cp} \ar[r] & \ce(?,C)|_{\cp} \ar[r] &  0
}
\end{equation*}
is exact in $\Mod(\cp)$. 
\end{lemma}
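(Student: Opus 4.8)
The plan is to show that the functor $\ce(?,-)|_{\cp}$ applied to the conflation $0\to A\to B\to C\to 0$ is exact, by checking exactness object-wise at each $P\in\cp$ and then verifying surjectivity of the last map. First I would observe that for a fixed projective object $P$, applying the contravariant Hom-functor $\ce(P,-)$ to any conflation in $\ce$ yields a sequence of abelian groups
\begin{equation*}
0\to \ce(P,A)\to \ce(P,B)\to \ce(P,C)\to 0.
\end{equation*}
Left exactness here is automatic: $\ce(P,-)$ is left exact for every object $P$ because admissible monics are kernels of admissible epics in an exact category, and composition with a morphism preserves such universal properties (equivalently, the deflation $B\to C$ has kernel $A\to B$, and Hom into a kernel computes the kernel of the Hom-maps). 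The only nontrivial point is surjectivity of $\ce(P,B)\to\ce(P,C)$, which is precisely the statement that every morphism $P\to C$ lifts along the deflation $B\to C$; this is exactly the defining property of a projective object in an exact category, using that $B\to C$ is an admissible epimorphism (a deflation). Hence the sequence of groups is exact for each $P\in\cp$.

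Next I would transfer this object-wise exactness to a statement in the functor category $\Mod(\cp)$. A sequence of $\cp$-modules is exact in $\Mod(\cp)$ precisely when it is exact after evaluation at every object of $\cp$, since (co)kernels and images in a functor category into abelian groups are computed pointwise. By construction, $\ce(?,A)|_{\cp}$ evaluated at $P$ is the abelian group $\ce(P,A)$, and the maps in the displayed sequence of modules evaluate to the maps $\ce(P,A)\to\ce(P,B)\to\ce(P,C)$ induced by post-composition. By the previous paragraph each such evaluated sequence is short exact, so the sequence of modules is short exact in $\Mod(\cp)$, as claimed.

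The main (and essentially only) obstacle is correctly invoking the definition of projectivity in an exact category for the surjectivity of the last map: one must use that $B\to C$ is a deflation (admissible epimorphism), which holds because $0\to A\to B\to C\to 0$ was assumed to be a conflation in $\ce$. Everything else — left exactness of representable functors on conflations, and pointwise computation of exactness in $\Mod(\cp)$ — is formal and requires no computation. I would therefore keep the write-up short: state the pointwise short exact sequence, cite left exactness of $\ce(P,-)$ and the definition of projective object for the two halves, and conclude by the pointwise criterion for exactness in $\Mod(\cp)$.
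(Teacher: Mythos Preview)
Your proposal is correct and follows essentially the same approach as the paper: the paper's proof is the single sentence ``This follows from the fact that $\ce(P,?)$ is exact for every $P\in\cp$,'' and your write-up is precisely an unpacking of that statement (left exactness of $\ce(P,-)$ plus the lifting property of projectives, then pointwise exactness in $\Mod(\cp)$).
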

\begin{proof}
This follows from the fact that $\ce(P, ?)$ is exact for every $P\in \cp$.
\end{proof}

\begin{corollary}
\thlabel{Yoneda and gpr}
Suppose $\ce$ is a Frobenius category. Then the essential image of the restricted Yoneda functor $\ce \rightarrow \Mod(\cp)$ is contained in $\gpr(\cp)$.
\end{corollary}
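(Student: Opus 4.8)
The plan is to exhibit, for each object $X$ of $\ce$, an explicit complete projective resolution (in the sense of \thref{gpr}) of the $\cp$-module $M_X:=\ce(?,X)|_{\cp}$, obtained by applying the restricted Yoneda functor to a doubly-infinite acyclic complex of projective--injective objects of $\ce$.

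First I would use that $\ce$, being Frobenius, has enough projectives and enough injectives and that both classes coincide with $\cp$. Choosing conflations $0\to\Sigma^{i}X\to P^{i}\to\Sigma^{i+1}X\to 0$ and $0\to\Omega^{i+1}X\to P_{i}\to\Omega^{i}X\to 0$ for $i\geq 0$ (with $\Sigma^{0}X=\Omega^{0}X=X$) and all $P^{i},P_{i}\in\cp$, and splicing them, one obtains a complex
\[
P_\bullet^{X}:\quad \cdots\to P_1\to P_0\to P^{0}\to P^{1}\to\cdots
\]
of objects of $\cp$ which is acyclic in $\ce$ (being a splice of conflations) and with $X\cong\cok(P_1\to P_0)$.

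Next I would apply the restricted Yoneda functor $Y\mapsto\ce(?,Y)|_{\cp}$ to $P_\bullet^{X}$. For $P\in\cp$ the module $\ce(?,P)|_{\cp}$ is the representable $\cp$-module $\cp(?,P)$, hence lies in $\proj(\cp)$; and by \thref{one lemma} the functor $\ce(?,-)|_{\cp}$ carries each of the conflations above to a short exact sequence of $\cp$-modules. Splicing these, the complex $\ce(?,P_\bullet^{X})|_{\cp}$ is acyclic in $\Mod(\cp)$, has terms in $\proj(\cp)$, and satisfies $M_X\cong\cok\bigl(\cp(?,P_1)\to\cp(?,P_0)\bigr)$. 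It remains to check that $\Hom_{\cp}\bigl(\ce(?,P_\bullet^{X})|_{\cp},P'\bigr)$ stays acyclic for every $P'\in\proj(\cp)$. Since $\cp$ is additive, a finitely generated free $\cp$-module is representable, so $P'$ is a direct summand of some $\cp(?,Q)$ with $Q\in\cp$; as $\Hom_{\cp}(-,-)$ preserves direct summands in the second variable, it suffices to treat $P'=\cp(?,Q)$. By the Yoneda lemma the complex $\Hom_{\cp}\bigl(\ce(?,P_\bullet^{X})|_{\cp},\cp(?,Q)\bigr)$ is isomorphic to the complex obtained by applying $\ce(-,Q)$ to $P_\bullet^{X}$, and this is acyclic because $Q$, being projective in the Frobenius category $\ce$, is injective, so that $\ce(-,Q)$ is exact.

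I do not expect a genuine obstacle here: the only points requiring care are the bookkeeping ensuring that the two half-resolutions splice so that $X$ is the asserted cokernel, and the final reduction of the total-acyclicity condition on $\Hom_{\cp}(P_M,-)$ to representable test modules, where one invokes injectivity of the objects of $\cp$. Everything else follows immediately from \thref{one lemma}, the Yoneda lemma and \thref{gpr is Frobenius}.
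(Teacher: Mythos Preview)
Your proposal is correct and follows essentially the same approach as the paper: both construct a doubly-infinite acyclic complex of projective--injectives over $X$ by splicing conflations and then push it through the restricted Yoneda functor using \thref{one lemma}. The only difference is that you spell out the total-acyclicity check (reducing to representable test modules via the Yoneda lemma and invoking injectivity of $Q\in\cp$), whereas the paper leaves this step implicit under ``the claim follows easily''.
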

\begin{proof}
It follows from \thref{one lemma} that the complete resolution
\begin{equation*}
\xymatrix{
\cdots \ar[r] & P_1 \ar[rr]\ar[dr] & & P_0 \ar[rr]\ar[dr] & & P^0 \ar[rr]\ar[dr] & & P^1 \ar[r] \ar[rd] & \cdots \\
\cdots \ \ \ar[ru]& & X_{1} \ar[ur] & & X  \ar[ur] & &  X^1  \ar[ur] & & \ \ \cdots 
}
\end{equation*}
obtained by splicing the admissible short exact sequences $0\to  X_{i+1} \to P_{i} \to X_{i} \to 0$ and $0\to  X^{i}  \to P^{i} \to X^{i+1} \to 0$ for $i\geq 0$ and $X_0=X=X^0$ is sent to the complete resolution
\begin{equation*}
\xymatrix{
(?,P_1) \ar[rr]\ar_{\cdots\phantom{jkldsfi dsuh}}[dr] & &(?,P_0) \ar[rr]\ar[dr] & & (?,P^0) \ar[rr]\ar[dr] & & (?,P^1) \\
& (?,X_1)\ar[ur] & & (?,X)  \ar[ur] & &(?,  X^1)  \ar_{\phantom{jkldsfi dsuh}\cdots}[ur] &
}
\end{equation*}
where we abbreviate $\ce(?,?)|_{\cp}$ by $(?,?)$. The claim follows easily.
\end{proof}

The following theorem is a version of \cite[Theorem 4.2]{Chen} that can be deduced from \thref{one lemma} and \thref{Yoneda and gpr}. It allows us to think of $\ce$ as a full exact subcategory of $\gpr(\cp)$.

\begin{theorem}
The restricted Yoneda functor $\ce \to \gpr(\cp) $ is full and faithful. Moreover, its essential image is an exact-closed subcategory of $\gpr(\cp)$.
\end{theorem}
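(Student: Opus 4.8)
The plan is to reprove \cite[Theorem 4.2]{Chen} in the present language, using \thref{one lemma} and \thref{Yoneda and gpr} as the two workhorses. Write $Y\colon\ce\to\Mod(\cp)$ for the restricted Yoneda functor, $Y(X)=\ce(?,X)|_{\cp}$. By \thref{Yoneda and gpr} the functor $Y$ factors through $\gpr(\cp)$, and since $\gpr(\cp)$ is extension-closed in $\Mod(\cp)$ (see the proof of \thref{gpr is Frobenius}), it suffices to prove that $Y$ is full and faithful and that its essential image is extension-closed in $\Mod(\cp)$; the last two properties then immediately give the statement. Throughout I use that $\ce$, being Frobenius, has enough projectives, so that every object admits an admissible epic from an object of $\cp$, and that deflations in an exact category are epimorphisms with kernel the corresponding inflation.

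\emph{Faithfulness and fullness.} For faithfulness, let $f\colon X\to Y$ in $\ce$ with $Y(f)=0$; choosing an admissible epic $\pi\colon P_0\twoheadrightarrow X$ with $P_0\in\cp$ one gets $f\pi=Y(f)_{P_0}(\pi)=0$, and $\pi$ epic forces $f=0$. For fullness, fix $X,Y$ and $\varphi\colon Y(X)\to Y(Y)$. Build a presentation $P_1\xrightarrow{d}P_0\xrightarrow{\pi}X$ with $P_0,P_1\in\cp$, by splicing a conflation $0\to X_1\to P_0\xrightarrow{\pi}X\to 0$ with an admissible epic $P_1\twoheadrightarrow X_1$; by \thref{one lemma} the sequence $Y(P_1)\xrightarrow{Y(d)}Y(P_0)\xrightarrow{Y(\pi)}Y(X)\to 0$ is exact in $\Mod(\cp)$. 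Since $P_0,P_1\in\cp$, the modules $Y(P_0)=\cp(?,P_0)$ and $Y(P_1)=\cp(?,P_1)$ are representable, so by the Yoneda lemma $\varphi\circ Y(\pi)=Y(g)$ for a unique $g\colon P_0\to Y$, and $Y(gd)=\varphi\circ Y(\pi d)=0$ forces $gd=0$. As $d$ factors through the deflation $P_1\twoheadrightarrow X_1$ and $\pi$ is a cokernel of $X_1\hookrightarrow P_0$, the equality $gd=0$ means $g$ factors (uniquely) as $g=f\pi$ with $f\colon X\to Y$. Then $Y(f)\circ Y(\pi)=Y(g)=\varphi\circ Y(\pi)$ and $Y(\pi)$ epic give $Y(f)=\varphi$.

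\emph{The essential image is extension-closed.} Let $0\to M'\to M\to M''\to 0$ be a short exact sequence in $\Mod(\cp)$ with $M'\cong Y(X')$ and $M''\cong Y(X'')$, $X',X''\in\ce$. Choose an admissible epic $P_0\twoheadrightarrow X''$ with $P_0\in\cp$ and kernel $X''_1$; applying \thref{one lemma} yields a short exact sequence $0\to Y(X''_1)\to Y(P_0)\to M''\to 0$ in $\Mod(\cp)$ with $Y(P_0)$ projective. Hence the connecting map $\partial\colon\Hom_{\cp}(Y(X''_1),M')\to\Ext^1_{\cp}(M'',M')$ is surjective, and by the full-faithfulness just shown $\Hom_{\cp}(Y(X''_1),M')\cong\ce(X''_1,X')$. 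Pick $h\colon X''_1\to X'$ with $\partial(Y(h))=[M]$, and form in $\ce$ the pushout $0\to X'\to X\to X''\to 0$ of $0\to X''_1\to P_0\to X''\to 0$ along $h$ (which exists and is a conflation with the same cokernel, by Quillen's axioms). Applying $Y$ and \thref{one lemma}, the pushout square in $\ce$ goes to a morphism of short exact sequences of $\cp$-modules which is the identity on the cokernel $M''$ and $Y(h)$ on the sub; such a morphism exhibits $0\to M'\to Y(X)\to M''\to 0$ as the pushout of $0\to Y(X''_1)\to Y(P_0)\to M''\to 0$ along $Y(h)$, i.e. it represents $\partial(Y(h))=[M]$. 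Therefore $M\cong Y(X)$ lies in the essential image of $Y$.

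I expect the only genuinely delicate point to be that last identification: that the conflation obtained by applying $Y$ to the pushout square in $\ce$ really computes the class $\partial(Y(h))$, rather than merely being \emph{some} extension of $M''$ by $M'$. This is exactly where \thref{one lemma} is essential, since it guarantees that $Y$ sends both the inflation $X''_1\hookrightarrow P_0$ and its pushout $X'\hookrightarrow X$ to short exact sequences with a common cokernel $M''$, so that the image of the pushout square is a commutative square of short exact sequences with the correct maps on sub and cokernel; once this is in place, being a pushout square is automatic. Everything else (existence of enough projectives, the Yoneda lemma for representable $\cp$-modules, exactness of the pushout in $\ce$, the long exact $\Ext$-sequence in $\Mod(\cp)$) is routine.
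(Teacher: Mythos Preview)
Your proof is correct and follows precisely the approach the paper indicates: the paper does not spell out an argument but simply records that the statement is a version of \cite[Theorem 4.2]{Chen} deducible from \thref{one lemma} and \thref{Yoneda and gpr}, and you have carried out exactly this deduction. The care you take with the ``delicate point'' (that the image under $Y$ of the pushout square in $\ce$ is again a pushout square in $\Mod(\cp)$, which follows because in an abelian category a morphism of short exact sequences that is the identity on the cokernel has left square a pushout) is well placed and correctly handled.
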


\subsection{The ambient Frobenius category for the orbit category} Suppose that $\ce$ is a essentially small Frobenius category endowed with an exact automorphism $F:\ce \to \ce$. Let $p: \ce \to \ce / F$ be the natural projection. It induces a pair of adjoint functors
\begin{equation*}
\xymatrix{
 \Mod (\ce) \ar_{\pi}@<-0.5ex>[d] \\
 \Mod (\ce / F), \ar_{p^{\ast}}@<-0.5ex>[u]
}
\end{equation*}
where $p^*$ is the \emph{restriction functor} and its left adjoint $\pi$ is the \emph{extension} of $p$ to $\Mod (\ca)$ (\cf \cite[Lemma 2.4]{Krause 98}).  It is clear that the full subcategory of $\ce/F$ defined by the objects in $\cp$ is equivalent to $\cp/F$. Therefore, we will consider $\cp / F $ as a full subcategory of $\ce / F$.

\begin{theorem}
\thlabel{ambient}
The restricted Yoneda functor $\ce/F \to \Mod(\cp/F)$ is full and faithful, its essential image is contained in $\gpr(\cp/F)$ and there is a commutative diagram 
\begin{equation*}
\xymatrix{
\ce \ar@{^{(}->}[r]\ar[d]_{p} & \gpr (\cp)  \ar[d]_{\pi |}   \\
\ce /F \ar@{^{(}->}[r] & \gpr (\cp/F), 
}
\end{equation*}
where $\pi |$ denotes is the restriction of $\pi$ to $\gpr(\cp)$. 
\end{theorem}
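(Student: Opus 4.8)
The plan is to verify the three assertions in turn, bootstrapping from the non-equivariant case (\thref{Yoneda and gpr} and the theorem of Chen preceding this statement) together with the structural results on orbit categories. First I would address fullness and faithfulness of the restricted Yoneda functor $Y\colon \ce/F \to \Mod(\cp/F)$. Here the key point is that the Hom-spaces in $\ce/F$ and $\cp/F$ are direct sums over $l\in\Z$ of the corresponding Hom-spaces in $\ce$, so for objects $X,Y\in\ce$ one has a natural identification
\begin{equation*}
\Mod(\cp/F)\bigl(\ce/F(?,X)|_{\cp/F},\, \ce/F(?,Y)|_{\cp/F}\bigr) \;\cong\; \prod_{l\in\Z}\Hom_{\Mod(\cp)}\bigl(\ce(?,X)|_{\cp},\, \ce(?,F^l Y)|_{\cp}\bigr),
\end{equation*}
and one checks that under this identification $Y$ becomes the product over $l$ of the maps $\ce(X,F^lY)\to \Hom_{\Mod(\cp)}(\ce(?,X)|_\cp,\ce(?,F^lY)|_\cp)$, each of which is bijective by Chen's theorem applied to $\ce$. (Strictly speaking one wants a $\bigoplus$ rather than a $\prod$ on the left; this is where essential smallness and the finiteness built into $p$ being an orbit projection are used, cf.\ the fact that morphisms in an orbit category are finitely supported.) Equivalently, one can argue that $Y$ factors as $\ce/F \to (\gpr\cp)/F \to \Mod(\cp/F)$ where the first functor is induced by Chen's embedding via the functoriality of orbit categories, and the second is the functor induced by restriction along $\cp\to\cp/F$; fullness and faithfulness then follow from the corresponding properties downstairs together with a direct computation that the second functor is fully faithful on the image of $(\gpr\cp)/F$.

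Next I would show the essential image lands in $\gpr(\cp/F)$. For $X\in\ce$, take a complete projective resolution $P_X$ of $\ce(?,X)|_\cp$ in $\proj(\cp)$ built by splicing conflations in $\ce$ as in the proof of \thref{Yoneda and gpr}. Applying $\pi$ (the left adjoint of restriction along $p$) sends $\proj(\cp)$ to $\proj(\cp/F)$, since $\pi$ is left adjoint and takes representables to representables: $\pi(\cp(?,P))\cong \cp/F(?,P)$. So it suffices to prove that $\pi$ carries the acyclic complex $P_X$ of projectives to an acyclic complex, and that acyclicity is preserved after applying $\Hom_{\cp/F}(-,Q)$ for $Q\in\proj(\cp/F)$. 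The first follows because $\pi$ of an acyclic complex of projective $\cp$-modules is computed degreewise as $\bigoplus_{l\in\Z}(\text{shift of }F^l)$ of the original complex — more precisely $\pi(M)(Q)=\bigoplus_l M(F^l Q)$ — and a $\Z$-indexed direct sum of the acyclic complexes $P_X, FP_X, F^2P_X,\dots$ is again acyclic; the $\Hom$-acyclicity is handled by the same bookkeeping, using that $\Hom_{\cp/F}(\pi M, \cp/F(?,Q))\cong \Hom_\cp(M,\cp(?,Q))$ by adjunction, which is acyclic by hypothesis on $P_X$. This simultaneously identifies $\pi|$ as a functor $\gpr(\cp)\to\gpr(\cp/F)$, matching the right-hand vertical arrow in the diagram.

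Finally, commutativity of the square is essentially formal once the ingredients are named: both composites send $X\in\ce$ to $\pi\bigl(\ce(?,X)|_\cp\bigr)\cong \ce/F(?,X)|_{\cp/F}$, and this isomorphism is natural in $X$ by the naturality of the adjunction unit and of restricted Yoneda. Concretely, going down-then-right gives $\pi$ applied to the restricted Yoneda image, which by the computation $\pi(M)(Q)=\bigoplus_l M(F^l Q)$ equals $\bigoplus_l \ce(F^l Q, X) = \bigoplus_l \ce(Q, F^{-l}X)=\ce/F(Q,X)$; going right-then-down is the restricted Yoneda image in $\Mod(\cp/F)$ of $p(X)=X$, which is literally $\ce/F(?,X)|_{\cp/F}$. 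These agree. I expect the main obstacle to be the careful control of the sum-versus-product issue and the precise comparison of $\pi$ with the ``take the $\Z$-graded sum of $F$-shifts'' operation: making sure that $\pi$ of a representable is exactly the representable $\cp/F$-module (not merely related to it), and that the splicing construction commutes with $\pi$ on the nose. Everything else reduces, by the direct-sum decomposition of orbit-category Hom-spaces, to the non-equivariant statements already established.
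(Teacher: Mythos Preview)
Your proposal is correct and follows essentially the same route as the paper. The paper does not give a direct proof of \thref{ambient}; it defers to the completed-orbit-category versions (\thref{embedding lemma}, \thref{pi on acyclic}, \thref{restriction to gpr}) and remarks that the arguments adapt verbatim with direct sums in place of products. What you have written is exactly that adaptation made explicit: the adjunction $\pi\dashv p^*$, the identification $p^*\pi(M)\cong\bigoplus_l F^l_*(M)$, and the reduction to Chen's non-equivariant embedding.

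One small correction to your diagnosis of the sum-versus-product issue. The reason
\[
\Hom_{\cp}\!\bigl(\ce(?,X)|_\cp,\ \textstyle\bigoplus_l F^l_*(\ce(?,Y)|_\cp)\bigr)\;\cong\;\bigoplus_l \Hom_{\cp}\!\bigl(\ce(?,X)|_\cp,\ F^l_*(\ce(?,Y)|_\cp)\bigr)
\]
holds is not that morphisms in $\ce/F$ are finitely supported, but that the module $\ce(?,X)|_\cp$ is finitely presented (it lies in $\gpr(\cp)$, hence admits a finite presentation), so $\Hom$ out of it commutes with filtered colimits and in particular with direct sums. This is the precise point where the usual-orbit argument diverges from the completed one, and once it is in place everything else goes through exactly as you outline.
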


Recall that $\ca c(\cp)_{dg}$ is the canonical dg enhancement of $\ul{\ce}$ (see \thref{canonical enhancement}). Let $\widetilde{F}:\ca c(\cp)_{dg} \to \ca c(\cp)_{dg}$ be the dg functor given by $F$ componentwise and let $\ul{F}:\ul{\ce} \to \ul{\ce}$ be the automorphism induced by $F$ on $\ul{\ce}$. The following is a key result to prove our main theorem.

\begin{theorem}
\thlabel{triangulated functor usual}
 Suppose that $\ul{\ce}/\ul{F}$ is triangulated with respect to $(\ca c (\cp)_{dg},M_{\widetilde{F}})$. Then there is full and faithful triangulated functor $\ul{\ce}/\ul{F} \to \ul{\gpr}(\cp / F)$ which makes the following diagram commutative up to isomorphism
\begin{equation*}
\xymatrix{
\ul{\ce} \ar@{^{(}->}[r]\ar[d]_{p} & \ul{\gpr} (\cp)  \ar[d]_{\ul{\pi}}\\
\ul{\ce} /\ul{F} \ar@{^{(}->}[r] & \ul{\gpr} (\cp / F). 
}
\end{equation*}
\end{theorem}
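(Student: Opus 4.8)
The plan is to assemble this functor from pieces that are already available. First I would set up the two relevant dg enhancements and the comparison between them. On one side, $\ul{\gpr}(\cp)$ has the canonical dg enhancement $\ac(\proj(\cp))_{dg}$ (acyclic complexes with components in $\proj(\cp)$) by \thref{canonical enhancement}, applied to the Frobenius category $\gpr(\cp)$ whose projectives are $\proj(\cp)$ (\thref{gpr is Frobenius}); on the other side, $\ul{\ce}$ is enhanced by $\ac(\cp)_{dg}$. The restricted Yoneda functor $\ce \hookrightarrow \gpr(\cp)$ takes projectives of $\ce$ to projectives of $\gpr(\cp)$ (it sends $\cp$ to $\proj(\cp)$), hence extends componentwise to a dg functor $\ac(\cp)_{dg} \to \ac(\proj(\cp))_{dg}$ lifting the embedding $\ul{\ce}\hookrightarrow\ul{\gpr}(\cp)$. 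Crucially, this dg functor is a quasi-equivalence onto its image: by \thref{Yoneda and gpr} and the theorem following it, $\ce\hookrightarrow\gpr(\cp)$ is fully faithful and extension-closed, so on stable categories $\ul{\ce}\hookrightarrow\ul{\gpr}(\cp)$ is a fully faithful triangulated functor, and $\ac(\cp)_{dg}$ is quasi-equivalent to the full dg subcategory of $\ac(\proj(\cp))_{dg}$ on the objects in the image.

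Next I would apply the functoriality of the dg orbit category, \thref{functoriality of dg orbit}. The componentwise functor $X\colon \ac(\cp)_{dg}\to\ac(\proj(\cp))_{dg}$ strictly commutes with the componentwise automorphisms $\widetilde F$ on each side (one can take $\gamma$ to be the identity since everything is defined componentwise), so \thref{functoriality of dg orbit} yields a quasi-functor
\[
\ol X\colon \ac(\cp)_{dg}/\widetilde F \longrightarrow \ac(\proj(\cp))_{dg}/\widetilde F .
\]
Applying the pretriangulated hull — which is functorial — and then $H^0$, we obtain a triangulated functor
\[
H^0\!\big(\pretr(\ac(\cp)_{dg}/\widetilde F)\big) \longrightarrow H^0\!\big(\pretr(\ac(\proj(\cp))_{dg}/\widetilde F)\big).
\]
The source is, by definition, the triangulated hull of $\ul{\ce}/\ul{F}$, which by hypothesis is $\ul{\ce}/\ul{F}$ itself. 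For the target I would invoke \thref{ambient}: the identification of the full subcategory of $\ce/F$ on objects of $\cp$ with $\cp/F$, together with the commutative square in \thref{ambient}, shows that $\gpr(\cp/F)$ plays the role of the ambient Frobenius category for $\cp/F$, and its canonical dg enhancement $\ac(\proj(\cp/F))_{dg}$ is quasi-equivalent to $\ac(\proj(\cp))_{dg}/\widetilde F$ (projectives of $\gpr(\cp/F)$ are $\proj(\cp/F)\simeq\proj(\cp)/F$, and a complex over $\proj(\cp)/F$ acyclic after restriction corresponds to an object of the dg orbit category of acyclic complexes). Hence the target of the displayed functor is identified with $\ul{\gpr}(\cp/F)$, and one checks that under these identifications the functor restricts on $\ul{\ce}$ to $\ul\pi\circ(\ul{\ce}\hookrightarrow\ul{\gpr}(\cp))$, which gives the asserted commutative square up to isomorphism.

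It remains to prove fully faithfulness. Here I would argue as follows: since $\ul{\ce}/\ul{F}$ equals its own triangulated hull, every object of $\ul{\ce}/\ul{F}$ is (a shift of, a cone of,\dots\ hence) built from objects $p(X)$ with $X\in\ul{\ce}$, and the Hom-spaces in the triangulated hull are computed as $\bigoplus_{l\in\Z}\ul{\ce}(X,\ul F^l Y)$ on such generators. The same colimit formula computes the Hom-spaces in $\ul{\gpr}(\cp/F)$ between the images, because $\pi$ (equivalently the restricted Yoneda functor of \thref{ambient}) induces, for $X,Y\in\ce$, isomorphisms $\ce/F(X,Y)\cong\gpr(\cp/F)(\ol X,\ol Y)$ by the full-faithfulness part of \thref{ambient}, and these pass to the stable categories; the comparison functor then matches the two colimit descriptions term by term via the quasi-equivalence $X$ of the first paragraph. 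The main obstacle — and the place requiring genuine care rather than formal nonsense — is precisely the bookkeeping that the dg orbit category $\ac(\proj(\cp))_{dg}/\widetilde F$ really is a dg enhancement of $\ul{\gpr}(\cp/F)$ (rather than of something larger): this is where one needs that passing to the orbit category and forming Gorenstein-projective modules "commute" in the appropriate sense, which is exactly the content packaged into \thref{ambient} and must be threaded carefully through the pretriangulated-hull construction to keep full-faithfulness.
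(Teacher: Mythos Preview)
Your route takes an unnecessary detour and the step you flag as the ``main obstacle'' is a genuine gap rather than bookkeeping. You form the orbit category on the \emph{target} side and then try to identify $\pretr\big(\ac(\proj(\cp))_{dg}/\widetilde F\big)$ with an enhancement of $\ul{\gpr}(\cp/F)$. \thref{ambient} does not say this: it embeds $\ce/F$ (not $\gpr(\cp)/F_*$) into $\gpr(\cp/F)$, and the paper explicitly warns that $\gpr(\cp/F)$ ``may be too large''. There is no reason that taking Gorenstein-projectives and passing to the orbit should commute, so the triangulated hull of $\ul{\gpr}(\cp)/\ul{F_*}$ need not be $\ul{\gpr}(\cp/F)$, and your claimed quasi-equivalence can fail.

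The paper avoids this entirely by never forming an orbit category on the target. It takes as target the \emph{already pretriangulated} dg category $\ac(\proj(\cp/F))_{dg}$ and defines a single dg functor
\[
\widetilde G:\ \ac(\cp)_{dg}\ \longrightarrow\ \ac(\proj(\cp))_{dg}\ \overset{\tilde\pi}{\longrightarrow}\ \ac(\proj(\cp/F))_{dg},
\]
the first arrow being componentwise Yoneda and the second the componentwise extension $\tilde\pi$ (\thref{pi on acyclic}, \thref{restriction to gpr}). Since $p(P_i)\cong p(F(P_i))$ in $\cp/F$ for every component $P_i$, the complexes $\widetilde G(P^{\cdot})$ and $\widetilde G\widetilde F(P^{\cdot})$ are isomorphic, giving $(\widetilde G,\phi)\in\eff\big(\ac(\cp)_{dg},M_{\widetilde F},\ac(\proj(\cp/F))_{dg}\big)$. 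Now \thref{pretr dg universal property} yields a quasi-functor out of $\pretr(\ac(\cp)_{dg}/\widetilde F)$ directly into $\ac(\proj(\cp/F))_{dg}$; taking $H^0$ and using the hypothesis that $\ul{\ce}/\ul{F}$ equals its triangulated hull gives the triangulated functor. No identification of an orbit category on the $\gpr$-side is needed. Full faithfulness is handled separately, via \thref{additive equivalence} and \thref{important remark} (your argument for this part is essentially the paper's, and is fine since every object of $\ul{\ce}/\ul{F}$ is already of the form $p(X)$).
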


%In section 6 we will prove the analogs of \thref{ambient} and \thref{triangulated functor usual} for \emph{completed orbit categories}. The statement and proofs are essentially the same therefore we will omit the proofs of these results.

\begin{remark}
\thref{ambient} and \thref{triangulated functor usual} can be stated for \emph{completed orbit categories}. The statements and the proofs in both settings are essentially the same. In section 6 we will give a proof of these theorems for completed orbit categories \cf  \thref{stable embedding} and \thref{important remark}. For this reason, we will omit the proofs of these statements.
\end{remark}

%\begin{remark}
%Suppose that $\ce$ is endowed with an exact automorphism $F:\ce \to \ce$. It is clear that the full subcategory of $\ce/F$ defined by the objects in $\cp$ is equivalent to $\cp/F$. In Section 5 we will show that the essential image of the restricted Yoneda functor $\ce/F \to \Mod(\cp/F)$ is contained in $\gpr(\cp/F)$, even if $\ce/F$ fails to be a Frobenius category. In particular, if the essential image of $\ce/F$ is an extension-closed subcategory of $\gpr(\cp/F)$, then $\ce/F$ admits an exact structure making the projection functor $\ce \to \ce /F$ exact. 
%We will show that $\ce/F$ being extension-closed in $\gpr (\cp /F) $ and $\ce/F$ having split idempotents are close related properties SEE REMARK (INTRODUCE A REMARK). Our main motivation for studying orbit categories associated to Frobenius categories comes from representation theory, therefore we will be interested in studying the case where $\ce$ is a Krull-Schmidt category and the question of wether $\ce/F$ is a Frobenius Krull-Schmidt category. We will see in CITE AN EXAMPLE that this is not true in general. Nevertheless we can show that the \emph{completed orbit category}, which we will introduce in the next section, has these properties.
%\end{remark}

\section{Completed orbit categories}

Let $k$ be a field and $\ca$ an essentially small additive category. Let $F:\ca \rightarrow \ca$ be an automorphism of $\ca$ such that for all objects $X$, $Y$ of $\ca$, the space $\ca (X,F^l (Y))$ vanishes for all integers $l\ll 0$ (whenever we make reference to completed orbit categories we will implicitly assume this condition). We define the \emph{completed orbit category} $\caf$ as the category whose objects are the same as those of $\ca$ and with morphism spaces
\begin{equation}
\ca \corb F(X,Y)= \prod_{l\in \Z} \ca(X,F^l(Y)).
\end{equation}
Notice that the vanishing condition imposed on the spaces $\ca (X,F^l (Y))$ ensures that the composition in $\ca \corb F$ defined as for the usual orbit category in (\ref{composition}) is a well-defined operation. Clearly, the category $\ca \corb F$ is $k$-linear and essentially small. Let $p: \ca \to \caf$ be the natural projection. As before, $p$ induces a pair of adjoint functors
\begin{equation*}
\xymatrix{
 \Mod (\ca) \ar_{\pi}@<-0.5ex>[d] \\
 \Mod (\caf), \ar_{p^{\ast}}@<-0.5ex>[u]
}
\end{equation*}
where $p^*$ is the \emph{restriction functor} and its left adjoint $\pi$ takes a projective module $\ca(?,X)$ to the projective module $\caf(?,p(X))$. We denote by $F_*$ the automorphism $M\mapsto M \circ F^{-1}$ of $\Mod (\ca)$ induced by $F$.%The functors $\pi$ and $p^{\ast}$ extend to a pair of adjoint functors $(\tilde{\pi},\tilde{p}^{\ast})$ at the level of chain complexes (they are defined componentwise).

\begin{lemma}
\thlabel{embedding lemma}
\thlabel{projective resolution}
Let $M$ be a finitely presented $\ca$-module. Then
\begin{itemize}
\item[$(i)$] we have a canonical isomorphism
\begin{equation*}
\xymatrix{
p^*\pi (M) \ar^{\phantom{xxx}\sim \px \px \ }[r] &\displaystyle{ \prod_{l\in \Z}} F^l_* (M),
}
\end{equation*}
\item[$(ii)$] let $L$ be an $\ca$-module admitting a resolution $\cdots \to P_1 \to P_0 \to L \to  0$ by finitely-generated projective $\ca$-modules $P_i$. Then the complex
\begin{equation*}
\xymatrix{
\cdots\ar[r] & \pi (P_1) \ar[r] & \pi (P_0) \ar[r] &\pi (L) \ar[r] & 0
}
\end{equation*} 
is a resolution of $\pi (L)$ by finitely-generated projective $\caf$-modules,
\item[$(iii)$] for each $\ca$-module $L$ admitting a resolution by finitely-generated projective $\ca$-modules, there are canonical isomorphisms
\begin{equation*}
\Ext^i_{\Mod(\caf)}(\pi (L),\pi (M)) \cong  \prod_{l\in \Z} \Ext^i_{\Mod(\ca)}( L, F^l_* (M))
\end{equation*}
for all $i\geq 0$,
\item[$(iv)$] if $\ce$ is an exact subcategory of $\mod (\ca)$ stable under the action of $F$, then we have a square commutative up to isomorphism
\begin{equation*}
\xymatrix{
\ce \ar@{^{(}->}[r]\ar[d]_{p} & \Mod (\ca) \ar[d]_{\pi}\\
\eaf\ar@{^{(}->}[r] & \Mod (\caf) 
}
\end{equation*}
with fully faithful horizontal arrows.
\end{itemize}
\end{lemma}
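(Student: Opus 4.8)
The plan is to establish the four assertions essentially in the order stated, since each feeds into the next. For $(i)$, I would compute $\pi(M)$ directly from the adjunction: writing a finite presentation $\ca(?,X_1)\to\ca(?,X_0)\to M\to 0$, the functor $\pi$ (being left adjoint to restriction along $p$, hence right exact and commuting with colimits) sends this to $\caf(?,p(X_1))\to\caf(?,p(X_0))\to\pi(M)\to 0$. Now evaluate $p^*\pi(M)$ on an object $Z$ of $\ca$: we get $\caf(Z,p(X_i))=\prod_{l\in\Z}\ca(Z,F^l(X_i))=\prod_{l\in\Z}(F^l_*\ca(?,X_i))(Z)$, and since the product is exact in $\Mod(\ca)$ and $\pi$ commutes with it on representables, passing to cokernels yields the canonical isomorphism $p^*\pi(M)\cong\prod_{l\in\Z}F^l_*(M)$. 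The naturality in $M$ is immediate from functoriality of the presentation-based description. One technical point I would be careful about is that $\pi$ need not be exact in general, so $(i)$ genuinely uses that $p^*$ is exact and faithful together with the explicit description of $\pi$ on projectives; the identification of $p^*\pi$ with $\prod_l F^l_*$ is really the heart of the whole lemma.

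For $(ii)$, the point is that $\pi$ takes finitely-generated projective $\ca$-modules to finitely-generated projective $\caf$-modules (it does so on representables, and hence on finite direct sums and summands thereof — using that $\pi$ is additive and preserves the defining property of projectives via the adjunction). Exactness of the resulting complex $\cdots\to\pi(P_1)\to\pi(P_0)\to\pi(L)\to 0$ I would deduce by applying the faithful exact functor $p^*$: by $(i)$ (extended to $L$ and the $P_i$, which also admit finite presentations or at least the needed exactness after $p^*$) we have $p^*\pi(P_\bullet)\cong\prod_l F^l_*(P_\bullet)$ and $p^*\pi(L)\cong\prod_l F^l_*(L)$, and $\prod_l F^l_*$ applied to the original resolution stays a resolution since each $F^l_*$ is an exact automorphism of $\Mod(\ca)$ and arbitrary products are exact there. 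Since $p^*$ is faithful and exact, reflecting exactness along it gives the claim. Then $(iii)$ follows formally: compute $\Ext^i_{\Mod(\caf)}(\pi(L),\pi(M))$ using the projective resolution $\pi(P_\bullet)$ from $(ii)$, so it is the cohomology of $\Hom_{\caf}(\pi(P_\bullet),\pi(M))$; by the $(\pi,p^*)$-adjunction this is $\Hom_{\ca}(P_\bullet,p^*\pi(M))$, which by $(i)$ is $\Hom_{\ca}(P_\bullet,\prod_l F^l_*(M))=\prod_l\Hom_{\ca}(P_\bullet,F^l_*(M))$; taking cohomology and commuting it past the product (products are exact in $\Mod(\ca)$) yields $\prod_l\Ext^i_{\Mod(\ca)}(L,F^l_*(M))$.

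Finally, $(iv)$ is the diagram-chase wrap-up. Commutativity up to isomorphism of the square is just the statement that for $X\in\ce$ the representable $\ca(?,X)$ is sent by $\pi$ to $\caf(?,p(X))$, which is the restricted-Yoneda image of $p(X)$ — essentially the definition of $\pi$ on representables — so I would only need to check this is natural in $X$, again routine. Faithfulness of $\eaf\hookrightarrow\Mod(\caf)$ is the Yoneda lemma for $\caf$; fullness requires showing every module morphism $\caf(?,p(X))\to\caf(?,p(Y))$ comes from a morphism in $\eaf$, which by Yoneda amounts to $\caf(p(X),p(Y))\cong\Hom_{\caf}(\caf(?,p(X)),\caf(?,p(Y)))$, true again by Yoneda — but one must also verify that the morphism actually lands inside the (exact) subcategory $\eaf$ rather than merely in $\Mod(\caf)$, which is where stability of $\ce$ under $F$ and the identification $\caf(X,Y)=\prod_l\ce(X,F^l Y)$ (all of whose terms lie in $\ce$) is used. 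I expect the main obstacle to be $(i)$: making precise that $\pi$ commutes with the infinite product $\prod_{l\in\Z}$ on the category of representables, despite $\pi$ not commuting with arbitrary limits in general — this works here only because evaluation at objects of $\ca$ reduces the product to a product of $k$-modules and because the vanishing hypothesis $\ca(X,F^l Y)=0$ for $l\ll 0$ makes $\caf(X,Y)$ well-defined, but I would want to state carefully why the identification respects the module structure and is natural.
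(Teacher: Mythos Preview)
Your proposal is correct and follows the same route as the paper: check $(i)$ on representables and extend by right exactness of $p^*\pi$, reflect exactness along the faithful exact functor $p^*$ for $(ii)$, and combine the $(\pi,p^*)$-adjunction with $(i)$ and $(ii)$ for $(iii)$ and $(iv)$. Your closing concern is unnecessary --- no commutation of $\pi$ with infinite products is invoked, since the identification $p^*\pi(\ca(?,X))\cong\prod_l F^l_*(\ca(?,X))$ is literally the definition of $\caf(p(?),p(X))$, and the passage to general finitely presented $M$ uses only right exactness of $p^*\pi$ on one side and exactness of $\prod_l F^l_*$ on the other.
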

\begin{proof}
$(i)$ Since $\pi (\ca(?,X))=\caf(?,p(X))=\prod_{l\in \Z}F^{l}_{*}(\ca(?,X))$, we have that $p^{*}\pi (P)=\prod_{l\in \Z}F^{l}_{*}(P)$ for all projective modules of finite type. Since $p^*\pi$ is right exact, we have
\begin{equation}
\label{p of pi}
p^{*}\pi (M)=\prod_{l\in \Z}F^{l}_{*}(M)
\end{equation}
for all $M$ in $\mod (\ca)$.

$(ii)$
By $(i)$ the complex $\cdots\rightarrow p^*\pi (P_1) \rightarrow p^*\pi (P_0) \rightarrow p^*\pi (L) \rightarrow 0$ is exact and for all $l\geq 0$,  $\pi (P_l)$ is a finitely-generated projective $\caf$-module. Since $p^*$ is the restriction functor, the claim follows.

$(iii)$ 
We have the following isomorphisms 
\begin{align}
\label{iso 1}
\Hom_{\caf}(\pi (L),\pi (M)) \cong&\ \Hom_{\ca}(L,p^*\pi (M)) \nonumber \\
\cong&\ \Hom_{\ca}(L,\prod_{l\in \Z}F^{l}_{*}(M)) \\ 
\cong& \prod_{l\in \Z}\Hom_{\ca}(L,F^{l}_{*}(M)).\nonumber  
%\cong& (\mod \widehat {\ca) /F}(L,M).
\end{align} 
By $(ii)$, the complex $ \cdots\rightarrow \pi (P_1) \rightarrow \pi (P_0) \rightarrow \pi L \rightarrow 0$ is a projective resolution of $\pi (L)$. After applying the functor $\Hom_{\caf}(?,\pi (M)) $ to this resolution and the last isomorphism in \eqref{iso 1} the claim follows.\\
$(iv)$ This is immediate from \eqref{iso 1}.
\end{proof}

\begin{lemma}
\thlabel{Krull-Schmidt}
\begin{itemize}
\item[$(i)$] If $X$ is an object of $\ca$ such that $\ca (X,X)$ is local and $\ca (X,F^l(X))$ vanishes for all $l<0$, then $\caf(p(X),p(X))$ is local.
\item[$(ii)$] If $\ca$ is a Krull-Schmidt category such that for each indecomposable object $X$, the ring $\ca(X,X)$ is local and $\ca(X,F^{l}(X))$ vanishes for all $l<0$, then $\caf$ is a Krull-Schmidt category whose indecomposables are the images of those of $\ca$.
\end{itemize}
\end{lemma}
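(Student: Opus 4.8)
The plan is to prove $(i)$ first and then bootstrap $(ii)$ from it using the standard machinery of Krull--Schmidt categories (idempotent lifting / decomposition into indecomposables with local endomorphism rings). For $(i)$, I would start by observing that the vanishing hypothesis $\ca(X,F^l(X))=0$ for all $l<0$ gives a ring isomorphism
\begin{equation*}
\caf(p(X),p(X)) \;=\; \prod_{l\in\Z}\ca(X,F^l(X)) \;=\; \prod_{l\geq 0}\ca(X,F^l(X)),
\end{equation*}
and that projecting onto the $l=0$ factor is a ring homomorphism $\rho\colon\caf(p(X),p(X))\to\ca(X,X)$ (this uses the composition formula (\ref{composition}): the degree-$0$ part of $(f_a)\circ(g_b)$ only involves $f_0\circ g_0$). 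Its kernel $I$ consists of those $(f_l)_{l\geq 0}$ with $f_0=0$, i.e. $f_l\in\ca(X,F^l(X))$ for $l\geq 1$. The key point is that $I$ is a (two-sided) ideal contained in the radical: any element of $I$ is "strictly upper triangular" with respect to the grading, so I would show that $1-u$ is invertible for every $u\in I$ by writing down the inverse as the formal series $1+u+u^2+\cdots$, which converges in $\caf(p(X),p(X))$ precisely because of the vanishing condition — for each fixed target degree $p$, only finitely many terms $u^n$ contribute (each $u^n$ lands in degrees $\geq n$). Hence $I\subseteq\rad\caf(p(X),p(X))$. Since $\ca(X,X)$ is local, $\rad\ca(X,X)$ is its unique maximal ideal, and $\rho^{-1}(\rad\ca(X,X))$ is then a proper ideal containing every non-unit of $\caf(p(X),p(X))$ (an element maps to a unit in $\ca(X,X)$ iff it is a unit in $\caf(p(X),p(X))$, by the series argument again); therefore $\caf(p(X),p(X))$ is local.

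For $(ii)$, I would first note that $\caf$ is additive and essentially small, and that $p$ is additive, so every object of $\caf$ is of the form $p(M)$ with $M\in\ca$; decomposing $M=\bigoplus_{i=1}^n X_i$ into indecomposables in $\ca$ gives $p(M)\cong\bigoplus_i p(X_i)$ in $\caf$. By hypothesis each $\ca(X_i,X_i)$ is local and $\ca(X_i,F^l(X_i))=0$ for $l<0$, so part $(i)$ applies and each $p(X_i)$ has local endomorphism ring, hence is indecomposable in $\caf$. Thus every object of $\caf$ is a finite direct sum of objects with local endomorphism rings. By the standard characterization of Krull--Schmidt categories (an additive category is Krull--Schmidt iff every object decomposes as a finite sum of objects with local endomorphism rings; see e.g. \cite{Krause 98}), this already shows $\caf$ is Krull--Schmidt, with the indecomposables being exactly the $p(X)$ for $X$ indecomposable in $\ca$ (and no others, since uniqueness of the decomposition forces any indecomposable summand of $p(M)$ to be isomorphic to some $p(X_i)$).

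I expect the main obstacle to be the careful verification in $(i)$ that $I\subseteq\rad$, specifically making the "geometric series" inverse argument rigorous: one must check that for $u=(u_l)_{l\geq 1}\in I$ the product $u^n=(w^{(n)}_l)_{l}$ has $w^{(n)}_l=0$ for $l<n$ (an easy induction from (\ref{composition}), since each factor raises the minimal nonzero degree by at least $1$), so that $\sum_{n\geq 0}u^n$ is a well-defined element of $\prod_{l\geq 0}\ca(X,F^l(X))$ and genuinely inverts $1-u$ on both sides; one also has to handle two-sided inverses, i.e. check $1-u$ is invertible and not merely left- or right-invertible, but in a ring where every non-unit generates a proper ideal this is automatic once $1-u$ has a one-sided inverse lying in $1+I$. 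Everything else is formal bookkeeping with the grading and an appeal to the structure theory of Krull--Schmidt categories.
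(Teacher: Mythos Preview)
Your proposal is correct and follows essentially the same approach as the paper: for $(i)$ the paper simply asserts that $(f_i)$ is invertible in $\caf(p(X),p(X))$ if and only if $f_0$ is invertible in $\ca(X,X)$, which is precisely what your radical/geometric-series argument establishes in detail; and for $(ii)$ the paper argues exactly as you do, decomposing in $\ca$ and applying $(i)$. Your write-up is more careful than the paper's one-line ``we can easily see'', but the underlying idea is identical.
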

\begin{proof}
$(i)$ We can easily see that $(f_i)\in \caf(p(X),p(X))$ is non-invertible if and only if $f_0:X\to X$ is non-invertible. This shows that $\caf(p(X),p(X))$ is local.

$(ii)$ By part $(i)$, the image $p(X)$ of each indecomposable $X$ of $\ca$ is indecomposable with local endomorphism ring. Hence, since each object of $\ca$ decomposes into a sum of indecomposables, the same holds for $\caf$.
\end{proof}

Suppose that $\ca$ has enough projectives and let $\cp$ denote its full subcategory of projective objects. The essential image of $\ca$ under $p$ is canonically identified with $\paf$. Let $\langle \paf \rangle$ be the ideal of morphisms of $\caf$ which factor through $\paf$. Denote by $\ul{F}$ the automorphism in $\ul{\ca}$ induced by $F$. The canonical projection $p: \ca \to \caf$ induces an $\ul{F}$-invariant functor $\ul{\ca}\longrightarrow \caf/\langle  \paf \rangle$. By the universal property of orbit categories we obtain a functor 
$\psi:\ul{\ce}/\ul{F}\longrightarrow \eaf/\langle  \paf \rangle$. 

\begin{proposition}
\thlabel{additive equivalence}
The functor $\psi$ is faithful. Moreover, if for all objects $X$, $Y$ of $\ca $ we have that $\ul{\ca}(X,\ul{F}^l(Y))=0$ for $l \gg 0$, then $\psi$ is fully.
\end{proposition}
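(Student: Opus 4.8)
The plan is to compute $\psi$ on morphism spaces directly, using the composition formula \eqref{composition} in completed orbit categories and the standing vanishing hypothesis on the spaces $\ca(X,F^l(Y))$. The first step is to record how $\psi$ acts on morphisms: since $\psi$ precomposed with the canonical functor $\ul{\ca}\to\ul{\ca}/\ul{F}$ is the functor induced by $p\colon\ca\to\caf$ followed by the projection $\caf\to\caf/\langle\paf\rangle$, and $p$ sends a morphism to its degree-$0$ component, $\psi$ sends the class of a morphism $f\colon X\to F^l(Y)$ (regarded in the summand $\ul{\ca}(X,\ul{F}^l(Y))$ of $\ul{\ca}/\ul{F}(X,Y)$) to the class in $\caf/\langle\paf\rangle(X,Y)$ of the element of $\caf(X,Y)=\prod_m\ca(X,F^m(Y))$ which has $f$ in degree $l$ and $0$ elsewhere.

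For faithfulness, I would take $\alpha=\sum_l\ol{f_l}\in\ul{\ca}/\ul{F}(X,Y)$ (a finite sum, $f_l\in\ca(X,F^l(Y))$) with $\psi(\alpha)=0$. Then the family $(f_l)_l$ lies in $\langle\paf\rangle(X,Y)$, hence factors in $\caf$ through $p(P)$ for some $P\in\cp$, say $(f_l)_l=(b_m)_m\circ(a_n)_n$ with $(a_n)_n\in\caf(X,p(P))$ and $(b_m)_m\in\caf(p(P),Y)$. The standing hypothesis forces $a_n=0$ and $b_m=0$ for all sufficiently negative indices, so \eqref{composition} exhibits each $f_l=\sum_{n+m=l}F^n(b_m)\circ a_n$ as a \emph{finite} sum of morphisms that factor through the projective objects $F^n(P)$ (using that the exact automorphism $F$ preserves $\cp$). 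Thus every $f_l$ factors through a projective object of $\ca$, so $\ol{f_l}=0$ and $\alpha=0$.

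For fullness, under the extra hypothesis, the device I would use is the existence of enough projectives in $\ca$. Fix an admissible epimorphism $\epsilon\colon P_Y\twoheadrightarrow Y$ with $P_Y\in\cp$, and note that a morphism $g\colon X\to F^l(Y)$ factors through a projective object if and only if it factors as $g=F^l(\epsilon)\circ\delta$ for some $\delta\colon X\to F^l(P_Y)$; for the nontrivial direction one lifts the projective end of any factorization of $g$ along the admissible epimorphism $F^l(\epsilon)$. Now given $\xi\in\caf/\langle\paf\rangle(X,Y)$ with representative $(g_l)_l$, the hypothesis provides $N$ such that $g_l$ factors through a projective for all $l\geq N$; I would choose $\delta_l\colon X\to F^l(P_Y)$ with $g_l=F^l(\epsilon)\circ\delta_l$ for $l\geq N$, set $\delta_l=0$ for $l<N$, assemble $\delta=(\delta_l)_l\in\caf(X,p(P_Y))$ (a valid element, since $\delta_l=0$ for $l<N$), let $\ol{\epsilon}\in\caf(p(P_Y),Y)$ be $\epsilon$ placed in degree $0$, and compute via \eqref{composition} that $\ol{\epsilon}\circ\delta$ equals the family agreeing with $(g_l)_l$ for $l\geq N$ and vanishing for $l<N$. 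Hence this ``tail'' of $(g_l)_l$ lies in $\langle\paf\rangle(X,Y)$, so $\xi$ is represented by the complement, a \emph{finitely supported} family (it also vanishes for $l\ll0$ by the standing hypothesis); by the morphism description of $\psi$ above, this family is $\psi$ of an element of $\ul{\ca}/\ul{F}(X,Y)$, so $\xi$ lies in the image of $\psi$.

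The main obstacle is precisely this last point. An arbitrary morphism of $\caf/\langle\paf\rangle$ is represented by a family with infinite support, and one must absorb its whole tail into a single morphism factoring through \emph{one} object of $\paf$; this cannot be done term by term, since $\cp$ need not be closed under infinite coproducts. The resolution is to route every term $g_l$ of the tail through the fixed object $F^l(P_Y)$ coming from a once-and-for-all chosen projective presentation of $Y$, so that the maps $\delta_l$ organize into one morphism $X\to p(P_Y)$ in the completed orbit category. Faithfulness, by contrast, is a routine verification whose only real content is the finiteness of the sum produced by \eqref{composition}, which the standing vanishing hypothesis guarantees.
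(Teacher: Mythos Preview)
Your argument is correct. Both your proof and the paper's hinge on the same device: a fixed admissible epimorphism $\epsilon\colon P_Y\twoheadrightarrow Y$ (the paper calls it a projective cover), through which every morphism in the ideal $\langle\paf\rangle(p(X),p(Y))$ can be routed. The difference is in packaging. The paper uses this observation once to identify $\langle\paf\rangle(p(X),p(Y))$ with the image of $\caf(p(X),p(P_Y))\to\caf(p(X),p(Y))$, and then, exploiting that $p(\epsilon)$ is concentrated in degree~$0$ and that products of $k$-modules are exact, obtains in one stroke
\[
\caf/\langle\paf\rangle\,(p(X),p(Y))\ \cong\ \prod_{l\in\Z}\ul{\ca}(X,\ul{F}^l(Y))\ =\ \ul{\ca}\corb\ul{F}\,(X,Y),
\]
under which $\psi$ becomes the inclusion of the direct sum into the direct product; faithfulness and fullness then drop out immediately. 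You instead treat the two claims separately: faithfulness by directly unwinding an arbitrary factorization through some $p(P)$ via the composition formula, and fullness by the tail-absorption trick through $p(P_Y)$. Your approach is more hands-on and perhaps more transparent about where each hypothesis is used (the standing vanishing for faithfulness, the extra vanishing for fullness), while the paper's single cokernel computation is slicker and makes the identification $\ul{\caf}\cong\ul{\ca}\corb\ul{F}$ explicit, which is conceptually pleasant.
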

\begin{proof}
For simplicity, along the proof we will denote $\caf  / \langle  \paf \rangle$ by $\ul{\caf}$. Let $P \overset{f}{\rightarrow} Y$ be a projective cover in $\ca$. Then the morphism
\begin{equation*}
\xymatrix{
F^l(P) \ar^{F^l (f)}[r] & F^l (Y)
}
\end{equation*}
is a projective cover for all $l\in \Z$. In particular, every morphism $p(P')\to p(Y)$ in $\caf$ with $P'\in \cp$ can be factorized through $p(f)$. Moreover, since $p(f)$ is concentrated in one degree we obtain the following isomorphisms
\begin{align}
\label{iso 1}
\ul{\caf}(p(X),p(Y)) \cong&\ \cok\left(\caf(p(X),p(P))\to \caf(p(X),p(Y))\right) \nonumber  \\
\cong&\ \cok\left(\prod_{l\in \Z}\ca(X,F^l(P))\to \prod_{l\in \Z}\ca(X,F^l(Y))\right) \nonumber  \\
\cong&\ \prod_{l\in \Z}\left(\cok \left(\ca(X,F^l(P))\to \ca(X,F^l(Y)\right)\right) \nonumber  \\ 
\cong&\ \prod_{l\in \Z} \ul{\ca}(X,\ul{F}^l(Y))\nonumber \\
=&\ \ul{\ca} \corb \ul{F} (X,Y).  \nonumber  
%\cong& (\mod \widehat {\ca) /F}(X,Y).
\end{align} 
We consider $\ul{\ca}/\ul{F} $ as a subcategory of $\ul{\ca} \corb \ul{F}$. One checks that, up to equivalence, the functor $\ul{\ca}/\ul{F}\to \ul{\caf}$ is given by this chain of isomorphisms at the level of morphisms. Both claims follow.
\end{proof}

\begin{remark}
\thlabel{important remark}
\thref{embedding lemma} and \thref{additive equivalence}  can be stated for usual orbit categories. The statements and the proofs are essentially the same. Notice that for usual orbit categories the functor $\psi:\ul{\ca}/\ul{F}\longrightarrow \ca /F /\langle  \cp /F \rangle $ is always full and faithful.
\end{remark}

\section{Comparison of the triangulated structures}
We use the universal property of dg orbit categories to construct a triangulated functor which will be crucial in the proof of the main theorem. Let $\ce$ be a Frobenius category endowed with an exact automorphism $F:\ce \to \ce$. 
The restriction of $F$ to $\cp$ induces an automorphism of $\cp$. We denote by $\widetilde{F}:\ca c(\cp)_{dg} \to \ca c(\cp)_{dg}$ the dg functor given by $F$ componentwise. 
Notice that $M_{\widetilde{F}}$ induces a triangle functor on $\ul{\ce}$ which is equivalent to $\ul{F}$. We identify $\ce$ with an exact subcategory of $\Mod(\cp)$ via the restricted Yoneda embedding. The adjoint functors 
$\pi : \Mod(\cp)\mathrel{\mathop{\rightleftarrows}} \Mod(\paf):p^*$ induce a pair of adjoint functors
\begin{equation*}
\xymatrix{
 \cc_{dg}(\cp) \ar_{\tilde{\pi}}@<-0.5ex>[d]\\
\cc_{dg}(\paf).  \ar_{\tilde{p}^{\ast}}@<-0.5ex>[u]. 
}
\end{equation*}
defined componentwise. We consider $\ca c(\proj (\cp))_{dg}$ (resp. $\ca c(\proj (\paf))_{dg}$) as a full subcategory of $\cc_{dg}(\cp)$ (resp. $\cc_{dg}(\paf)$), see \thref{canonical enhancement} and \thref{notation proj}. 

\begin{lemma}
\thlabel{pi on acyclic}
The functor $\tilde{\pi}$ restricts to a functor
\begin{equation*}
\xymatrix{
\tilde{\pi}: \ca c(\proj (\cp))_{dg} \to \ca c(\proj (\paf))_{dg}.
}
\end{equation*}
\end{lemma}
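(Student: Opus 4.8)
The plan is to check separately that $\tilde\pi$ sends complexes with components in $\proj(\cp)$ to complexes with components in $\proj(\paf)$, and that it preserves acyclicity. For the first point, recall from the construction of $\pi$ that it takes the representable $\cp$-module $\cp(?,P)$ to the representable $\paf$-module $\paf(?,p(P))$; hence $\pi$ restricts to a functor $\proj(\cp)\to\proj(\paf)$, and since $\tilde\pi$ is defined componentwise on $\cc_{dg}(\cp)$, it carries a complex with components in $\proj(\cp)$ to a complex with components in $\proj(\paf)$. It remains to see that if $P^\bullet\in\ca c(\proj(\cp))_{dg}$ is acyclic then so is $\tilde\pi(P^\bullet)$.

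For acyclicity, the key observation is \thref{projective resolution}$(i)$ (and its proof, equation (\ref{p of pi})): for a finitely presented $\cp$-module $M$ one has a canonical isomorphism $p^*\pi(M)\cong\prod_{l\in\Z}F^l_*(M)$, and in particular $p^*\pi$ is exact on complexes with projective components because it is computed componentwise as $\prod_{l}F^l_*(-)$, each $F^l_*$ being an automorphism of $\Mod(\cp)$ hence exact. Since $p^*$ is the restriction functor, it is exact and moreover \emph{conservative} with respect to acyclicity: a complex of $\paf$-modules is acyclic if and only if its restriction to $\cp$ is acyclic (restriction along $p$ only forgets the grading, so homology is unchanged). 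Therefore, applying $p^*$ to $\tilde\pi(P^\bullet)$ gives $p^*\tilde\pi(P^\bullet)\cong\prod_{l\in\Z}F^l_*(P^\bullet)$, which is a product of acyclic complexes (each $F^l_*(P^\bullet)$ is acyclic since $F^l_*$ is exact), hence acyclic; consequently $\tilde\pi(P^\bullet)$ is acyclic in $\cc_{dg}(\paf)$. Combining the two points, $\tilde\pi(P^\bullet)$ is an acyclic complex with components in $\proj(\paf)$, i.e.\ it lies in $\ca c(\proj(\paf))_{dg}$, and this is visibly functorial.

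The main obstacle I anticipate is making precise the claim that $p^*$ "detects acyclicity'', i.e.\ that an acyclicity statement for complexes of $\paf$-modules can be reduced to the corresponding statement after restriction along $p$ — one must be slightly careful that $p$ is essentially surjective on objects (which it is, $p$ being the identity on objects), so that a $\paf$-module vanishes iff all its values $M(p(X))$ vanish, and hence a complex of $\paf$-modules is acyclic iff its restriction to $\cp$ is. A secondary technical point is the commutation of $\pi$ (resp.\ $\tilde\pi$) with the relevant cokernels used to pass from projective resolutions to the description of $p^*\pi$; but this is exactly the content of the proof of \thref{projective resolution}$(i)$, which we may invoke, so no new work is needed there. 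Everything else is the routine verification that a componentwise-defined dg functor preserves the properties "components in a given additive subcategory'' and "acyclic''.
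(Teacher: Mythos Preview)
Your proof is correct. The ingredients are the same as the paper's, but the organisation differs slightly: the paper truncates $P^{\bullet}$ at each degree $i$ to obtain a projective resolution of the cokernel and then invokes part $(ii)$ of \thref{projective resolution} to conclude that each truncated complex (hence the whole complex) stays acyclic under $\tilde\pi$, whereas you apply the mechanism behind part $(ii)$---namely the identification $p^*\pi\cong\prod_{l}F^l_*$ from part $(i)$ together with the observation that the restriction $p^*$ reflects acyclicity---directly to the doubly-infinite complex. Your route avoids the truncation step and is marginally more direct; the paper's route has the advantage of citing the already-proved part $(ii)$ as a black box. Either way the content is the same, and your remarks about $p^*$ detecting acyclicity (since $p$ is the identity on objects) and about products of acyclic complexes in $\Mod(\cp)$ being acyclic are both sound.
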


\begin{proof}
Let $P^{\cdot}: \cdots \to P_{-1}\to P_0 \to P_1\to \cdots$ be a complex in $\ca c(\proj (\cp))_{dg}$. Then the complex $\tilde{\pi} (P^{\cdot})= \cdots \to \pi(P_1) \to \pi(P_0) \to \pi(P_{-1}) \to \cdots$ is a complex of finitely-generated projective $\paf$-modules. For each $i\in \Z$ consider the truncated complex 
\begin{equation*}
 \cdots \to \pi(P_{i-1})\to \pi(P_i) \to M \to 0,
\end{equation*}
\ie $M=\cok (\pi(P_{i-1})\to \pi(P_i))$. It follows from \thref{projective resolution} that this complex is acyclic. Since $i \in \Z$ is arbitrary then $\pi (P^{\cdot})$ is acyclic. 
\end{proof}

\begin{comment}
\begin{lemma}
\thlabel{about projectives}
The Yoneda functor $\paf \hookrightarrow \mod(\paf)$ induce an equivalence of additive categories $\paf\cong \proj(\gpr(\paf))$.
\end{lemma}
\begin{proof}
By \thref{Chen embedding} we can think of $\ce$ as a full subcategory of $\Mod(\cp)$. By \thref{embedding lemma} the functor $\eaf\hookrightarrow \Mod(\paf)$ induces an equivalence of additive categories between $\paf$ and its essential image, which is the full subcategory $\Mod(\paf)$ supported on the representable functors. The claim follows.
\end{proof}
\end{comment}

\begin{lemma}
\thlabel{restriction to gpr}
The functor $\pi: \Mod(\cp)\to \Mod(\paf)$ restricts to a functor $\pi:\gpr(\cp)\to \gpr(\paf)$.
\end{lemma}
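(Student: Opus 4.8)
The plan is to verify that the complete projective resolution of a Gorenstein-projective $\cp$-module is carried by $\pi$ to a complete projective resolution of its image over $\paf$. Let $M$ lie in $\gpr(\cp)$ and pick a complete projective resolution $P_M = (\cdots \to P_1 \to P_0 \to P^0 \to P^1 \to \cdots)$ with all terms in $\proj(\cp)$. First I would note that this is an acyclic complex with components in $\proj(\cp)$, so by \thref{pi on acyclic} (applied after reindexing so that it reads as a complex in $\ca c(\proj(\cp))_{dg}$) the complex $\tilde{\pi}(P_M)$ is again acyclic with components in $\proj(\paf)$; indeed $\pi$ sends $\cp(?,X)$ to $\paf(?,p(X))$, so every $\pi(P_i)$ and $\pi(P^i)$ is a finitely-generated projective $\paf$-module. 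Moreover $\pi$ is right exact, so from the conflation $0 \to Z^0 \to P_0 \to M \to 0$ obtained by splicing, one gets $\pi(M) \cong \cok(\pi(P_1)\to \pi(P_0))$, and $\pi(M)$ is finitely presented, hence lies in $\mod(\paf)$.

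The remaining point is that $\Hom_{\paf}(\tilde{\pi}(P_M), Q)$ stays acyclic for every $Q$ in $\proj(\paf)$. Since every object of $\proj(\paf)$ is a direct summand of a finite sum of representables $\paf(?,p(X))$ with $X \in \cp$, and $\Hom$ commutes with finite direct sums and with summands, it suffices to check this for $Q = \paf(?, p(X)) = \pi(\cp(?,X))$. Here is where I would invoke \thref{embedding lemma}$(iii)$: the modules $\pi(P_i)$ and $\pi(\cp(?,X))$ are all of the form $\pi(L)$ for $L$ admitting a finite projective resolution (in fact $L$ projective), so $\Hom_{\paf}(\pi(P_i), \pi(\cp(?,X))) \cong \prod_{l\in\Z}\Hom_{\cp}(P_i, F^l_*(\cp(?,X)))$, naturally in $P_i$. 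Thus the complex $\Hom_{\paf}(\tilde{\pi}(P_M), \pi(\cp(?,X)))$ is isomorphic to $\prod_{l\in\Z}\Hom_{\cp}(P_M, F^l_*(\cp(?,X)))$. Now $F^l_*(\cp(?,X)) = \cp(?, F^{-l}(X))$ is again representable, hence in $\proj(\cp)$, so each factor $\Hom_{\cp}(P_M, \cp(?,F^{-l}(X)))$ is acyclic because $P_M$ is a complete projective resolution; a product of acyclic complexes is acyclic, so the whole thing is acyclic. Therefore $\tilde{\pi}(P_M)$ is a complete projective resolution of $\pi(M)$ and $\pi(M) \in \gpr(\paf)$.

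The main obstacle I anticipate is bookkeeping rather than conceptual: one must make sure that the isomorphism of \thref{embedding lemma}$(iii)$ (stated there for $\Ext$, hence in particular for $\Hom$, which is the $i=0$ case) is natural in the first variable so that it upgrades to an isomorphism of $\Hom$-complexes, and that the "enough summands/sums" reduction from arbitrary $Q\in\proj(\paf)$ to representables is legitimate — this uses that $\pi$ by definition sends finitely-generated projectives to finitely-generated projectives and that every finitely-generated projective $\paf$-module is a summand of a finite sum of representables. Both are routine given the setup, so I would spell them out only briefly and then conclude.
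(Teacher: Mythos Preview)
Your proposal is correct and follows essentially the same route as the paper's proof: both invoke \thref{pi on acyclic} for acyclicity of $\tilde{\pi}(P_M)$, then use the product decomposition of $\Hom$-spaces over $\paf$ to reduce the ``$\Hom$ into projectives stays acyclic'' condition to the corresponding condition over $\cp$. The only cosmetic difference is that the paper appeals to \thref{embedding lemma}$(i)$ together with the Yoneda lemma to obtain the product decomposition, whereas you cite \thref{embedding lemma}$(iii)$ at $i=0$; these amount to the same thing (and incidentally $F^l_*(\cp(?,X)) \cong \cp(?,F^l(X))$, not $F^{-l}(X)$, though this is immaterial since the product runs over all $l\in\Z$).
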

\begin{proof}
Let $M$ be a module of $\gpr(\cp)$. In particular, there are objects $(P_i)_{i\in\Z}$ of $\cp$ and an acyclic complex 
\begin{equation*}
P^{\cdot}:\cdots \to \cp(?,P_i)\to \cp(?,P_{i-1}) \to \cdots 
\end{equation*}
such that $M\cong Z^0 (P^{\cdot})$. By \thref{pi on acyclic}, the complex 
\begin{equation*}
\tilde{\pi} (P^{\cdot})= \cdots \to \cp(?,p(P_i))\to \cp(?,p(P_{i-1})) \to \cdots 
\end{equation*}
is acyclic and $\pi(M)\cong Z^0(\pi(P^{\cdot}))$. If $X$ and $Y$ are arbitrary objects of $\cp$ then, by part $(i)$ of \thref{embedding lemma} and the Yoneda lemma, we obtain an isomorphism 
\begin{equation*}
\Hom(\paf(?,p(X)),\paf(?,p(Y)))\cong\prod_i\cp(X,F^{l}_{\ast}(Y)).
\end{equation*}
It follows that  $\Hom (\pi(P^{\cdot}), P')$ is still acyclic for each module $P'$ in $\proj(\paf)$.
\end{proof}

%It is a general fact on dg categories that the functor $\pi: \cp\to \paf$ induces a functor $\tilde{\pi}:\cc_{dg}(\cp)\to \cc_{dg}(\paf)$, which is the left adjoint to the restriction functor. 
The functor $\pi:\gpr(\cp)\to \gpr(\paf)$ preserves projectives, therefore it induces a functor $\ul{\pi}:\ul{\gpr}(\cp)\to \ul{\gpr}(\paf)$.
\begin{corollary}
\thlabel{stable embedding}
There is a square commutative up to isomorphism
\begin{equation*}
\xymatrix{
\ul{\ce} \ar[r]\ar[d]_{p} & \ul{\gpr} (\cp)  \ar[d]_{\ul{\pi}}\\
\ul{\ce} /\ul{F} \ar[r] & \ul{\gpr} (\paf), 
}
\end{equation*}
with faithful horizontal arrows. If for all $X$ and $Y$ we have that $\ul{\ce}(X,F^l(Y))=0$ for $l \gg 0$, then $\ul{\ce} /\ul{F} \to  \ul{\gpr} (\paf)$ is fully faithful.
\end{corollary}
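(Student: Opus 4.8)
The plan is to exhibit the bottom arrow of the square as a composite
$\ul{\ce}/\ul{F} \xrightarrow{\ \psi\ } \eaf/\langle \paf\rangle \xrightarrow{\ \bar{\jmath}\ } \ul{\gpr}(\paf)$,
where $\psi$ is the functor of \thref{additive equivalence} and $\bar{\jmath}$ is induced by the (completed) restricted Yoneda functor, and then to read off commutativity of the square and the faithfulness statements from ingredients already at hand. First I would record that the full and faithful exact functor $\ce\hookrightarrow\gpr(\cp)$ of Section~3 sends $\cp$ onto the representable $\cp$-modules, i.e.\ onto the projective--injective objects of $\gpr(\cp)$; hence it induces a fully faithful triangle functor $\ul{\ce}\to\ul{\gpr}(\cp)$, giving the top arrow. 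By \thref{restriction to gpr} the extension functor $\pi$ restricts to $\pi\colon\gpr(\cp)\to\gpr(\paf)$, and it preserves projectives since it takes the representable $\cp(?,P)$ to the representable $\paf(?,p(P))$; so it induces $\ul{\pi}$, the right arrow.

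\textbf{The bottom arrow and commutativity.} Next I would apply \thref{embedding lemma}$(iv)$ to $\ce$, regarded via the embedding of Section~3 as an $F$-stable exact subcategory of $\mod(\cp)$: together with \thref{restriction to gpr} this shows that $\ce\hookrightarrow\gpr(\cp)\xrightarrow{\pi}\gpr(\paf)$ is isomorphic to $\ce\xrightarrow{p}\eaf\xrightarrow{\jmath}\gpr(\paf)$, where $\jmath$ is the restricted Yoneda functor of $\eaf$ and is fully faithful (and, by this very comparison, lands in $\gpr(\paf)$). Since $\jmath$ carries $\paf$ into $\proj(\paf)$ it annihilates $\langle\paf\rangle$ and so induces $\bar{\jmath}\colon\eaf/\langle\paf\rangle\to\ul{\gpr}(\paf)$; I set the bottom arrow to be $\bar{\jmath}\circ\psi$. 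Commutativity then follows: $\psi$ is, by its construction in \thref{additive equivalence}, the functor obtained from the $\ul F$-invariant functor $\ul\ce\to\eaf/\langle\paf\rangle$ induced by $p$ via the universal property of the orbit category, so $\bar{\jmath}\circ\psi\circ p$ is the descent to stable quotients of $\jmath\circ p\cong\pi\circ(\text{Yoneda})$, which is exactly the top--right composite.

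\textbf{Faithfulness.} For the faithfulness claims I would first check that $\bar{\jmath}$ is fully faithful: $\jmath$ is fully faithful, and any morphism between objects of $\eaf$ that in $\gpr(\paf)$ factors through an object of $\proj(\paf)$ already factors through an object of $\paf$ — a direct summand of a representable $\paf(?,p(P))$ can be enlarged to $\paf(?,p(P))$, which lies in the image of $\jmath$ — so $\jmath$ induces an isomorphism on the quotient Hom-spaces defining the two ``stable'' categories. Granting this, $\bar{\jmath}\circ\psi$ is faithful because $\psi$ is faithful by \thref{additive equivalence}; and if $\ul{\ce}(X,F^l(Y))=0$ for $l\gg 0$, then $\psi$ is moreover full by \thref{additive equivalence}, so $\bar{\jmath}\circ\psi$ is fully faithful. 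The remaining horizontal arrow $\ul{\ce}\to\ul{\gpr}(\cp)$ is faithful since it is fully faithful.

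\textbf{Main obstacle.} The genuinely delicate point is the last paragraph: pinning down the stable morphism spaces, i.e.\ proving $\bar{\jmath}$ fully faithful (the ``enlarge a summand to a representable'' argument, so that the ideal of maps in $\gpr(\paf)$ between objects of $\eaf$ that factor through $\proj(\paf)$ is exactly the image of $\langle\paf\rangle$) and verifying that this identification is compatible with the chain of isomorphisms underlying \thref{additive equivalence}, so that $\bar{\jmath}\circ\psi$ is indeed the functor asserted in the statement. Everything else is bookkeeping with \thref{embedding lemma}, \thref{restriction to gpr}, \thref{additive equivalence} and Chen's embedding.
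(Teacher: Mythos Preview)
Your proposal is correct and follows essentially the same route as the paper: build the unstable square from \thref{embedding lemma}$(iv)$ and \thref{restriction to gpr}, pass to stable quotients using that $\jmath$ sends $\paf$ into $\proj(\paf)$, and then invoke \thref{additive equivalence} for the faithfulness (resp.\ fullness) of the bottom arrow. The paper compresses your ``main obstacle'' into the single line that the embedding $\eaf\hookrightarrow\gpr(\paf)$ \emph{identifies $\paf$ with $\proj(\paf)$}, whereas you spell out why the induced $\bar{\jmath}$ is fully faithful via the enlarge-a-summand argument; your more explicit treatment of that point is a welcome clarification rather than a different strategy.
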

\begin{proof}
By \thref{embedding lemma} and \thref{restriction to gpr} there is a commutative diagram
\begin{equation*}
\xymatrix{
\ce \ar@{^{(}->}[r]\ar[d]_{p} & \gpr (\cp)  \ar[d]_{\pi}\\
\eaf \ar@{^{(}->}[r] & \gpr (\paf).
}
\end{equation*}
with faithful horizontal arrows. The functor $\eaf \hookrightarrow \gpr(\paf)$ identifies $\paf$ with $\proj (\paf)$. Therefore there is a commutative diagram
\begin{equation*}
\xymatrix{
\ul{\ce} \ar[r]\ar[d] & \ul{\gpr} (\cp)  \ar[d]_{\ul{\pi}}\\
\eaf/ \langle\paf \rangle \ar[r] & \ul{\gpr} (\paf).
}
\end{equation*}
The claim follows from \thref{additive equivalence}.
\end{proof}

The functor $ \ul{\ce}/\ul{F} \to \ul{\gpr} (\paf) $ corresponds to the $\ul{F}$-invariant functor $G$ given by the composition
\begin{equation*}
G: \ul{\ce}\hookrightarrow \ul{\gpr} (\cp)\to \ul{\gpr}(\paf). 
\end{equation*}
We denote by $\widetilde{G}$ the composition
\begin{equation*}
\xymatrix{
\widetilde{G}: \ca c (\cp)_{dg} \ar[r] & \ca c (\proj(\cp))_{dg} \ar^-{\tilde{\pi}}[r]  & \ca c (\proj (\paf))_{dg},
}
\end{equation*}
where the first arrow is given by applying the Yoneda functor $\cp \hookrightarrow \proj(\cp)$ componentwise. Notice that $\ca c (\proj(\cp))_{dg}$ is a dg enhancement of $ \ul{\gpr} (\cp)$ and that $H^0(\widetilde{G})=G$.
\begin{theorem}
\thlabel{triangulated functor}
Suppose that $\ul{\ce}/\ul{F}$ is triangulated with respect to $(\ca c (\cp)_{dg},M_{\widetilde{F}})$. Then the functor $\ul{\ce}/\ul{F} \to \ul{\gpr}(\paf)$ is triangulated.
\end{theorem}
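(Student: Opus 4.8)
The plan is to produce, via the universal property of the pretriangulated hull of the dg orbit category, a triangulated functor out of the triangulated hull of $\ul{\ce}/\ul{F}$ that restricts to the functor of \thref{stable embedding}, and then to invoke the hypothesis that this hull is identified with $\ul{\ce}/\ul{F}$ itself.

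First I would collect the ingredients. Since $\gpr(\paf)$ is a Frobenius category with subcategory of projective--injectives $\proj(\paf)$ by \thref{gpr is Frobenius}, the dg category $\ca c(\proj(\paf))_{dg}$ is pretriangulated (\thref{example pretriangulated}) and is a canonical dg enhancement of $\ul{\gpr}(\paf)$ (\thref{canonical enhancement}); likewise $\ca c(\cp)_{dg}$ is pretriangulated and enhances $\ul{\ce}$. Next I would upgrade $\widetilde{G}$ to an object of $\eff(\ca c(\cp)_{dg},M_{\widetilde{F}},\ca c(\proj(\paf))_{dg})$. By construction $\widetilde{G}$ sends a complex $P^{\cdot}=(P_i)$ with components in $\cp$ to $(\paf(?,p(P_i)))$, whereas $\widetilde{G}\circ\widetilde{F}$ sends it to $(\paf(?,p(F(P_i))))$; postcomposition with the canonical isomorphism $\phi:p\iso p\circ F$ of the orbit category then defines a natural isomorphism of dg functors $\eta:\widetilde{G}\iso\widetilde{G}\circ\widetilde{F}$, the naturality of $\phi$ being exactly what forces $\eta$ to commute with differentials. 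Passing to bimodules, $\eta$ yields an isomorphism $M_{\eta}:M_{\widetilde{G}}\iso M_{\widetilde{G}}\lten M_{\widetilde{F}}$, so $(M_{\widetilde{G}},M_{\eta})$ is an object of $\eff(\ca c(\cp)_{dg},M_{\widetilde{F}},\ca c(\proj(\paf))_{dg})$, whose image under $H^0$ is the $\ul{F}$-invariant functor $G$ of \thref{stable embedding} together with its invariant structure.

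Now I would apply \thref{pretr dg universal property} with $\cb=\ca c(\cp)_{dg}$, $F=M_{\widetilde{F}}$ and $\cb'=\ca c(\proj(\paf))_{dg}$: the object $(M_{\widetilde{G}},M_{\eta})$ corresponds to a quasi-functor
\[
\Phi\in\rep\bigl(\pretr(\ca c(\cp)_{dg}/M_{\widetilde{F}}),\,\ca c(\proj(\paf))_{dg}\bigr),
\]
and, source and target being pretriangulated, $H^0(\Phi)$ is a triangulated functor from $H^0(\pretr(\ca c(\cp)_{dg}/M_{\widetilde{F}}))$ --- the triangulated hull of $\ul{\ce}/\ul{F}$ with respect to $(\ca c(\cp)_{dg},M_{\widetilde{F}})$ --- to $\ul{\gpr}(\paf)$. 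Precomposing $\Phi$ with the canonical quasi-functor $\ca c(\cp)_{dg}\to\pretr(\ca c(\cp)_{dg}/M_{\widetilde{F}})$ recovers, up to isomorphism and via \thref{dg universal property}, the quasi-functor $M_{\widetilde{G}}$; hence, writing $\iota:\ul{\ce}/\ul{F}\to H^0(\pretr(\ca c(\cp)_{dg}/M_{\widetilde{F}}))$ for the canonical functor, we obtain $(H^0(\Phi)\circ\iota)\circ p=H^0(M_{\widetilde{G}})=G$. Since the functor $\ul{\ce}/\ul{F}\to\ul{\gpr}(\paf)$ of \thref{stable embedding} is by definition the unique one whose composite with $p$ is $G$, the uniqueness in \thref{universal property} gives an isomorphism between $H^0(\Phi)\circ\iota$ and that functor. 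Finally, the hypothesis says $\iota$ is a triangle equivalence, so the functor $\ul{\ce}/\ul{F}\to\ul{\gpr}(\paf)$ is isomorphic to $H^0(\Phi)\circ\iota$, a composite of triangulated functors, and hence is triangulated.

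The only delicate point is the identification bookkeeping: verifying that $\eta$ really is induced by $\phi:p\iso p\circ F$ through the componentwise Yoneda embedding and through $\tilde{\pi}$, compatibly with the identifications $\pi(\cp(?,X))=\paf(?,p(X))$, and that the three universal properties in play --- of $\eff$, of $\pretr(\cb/F)$, and of the ordinary orbit category --- are threaded together so that $H^0(\Phi)$ does restrict to the intended functor. Beyond this there is no further homological content; \thref{pi on acyclic}, \thref{restriction to gpr} and \thref{stable embedding} already supply what is needed.
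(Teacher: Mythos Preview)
Your proposal is correct and follows essentially the same route as the paper: construct an object of $\eff(\ca c(\cp)_{dg},M_{\widetilde{F}},\ca c(\proj(\paf))_{dg})$ from $\widetilde{G}$, invoke the universal property of $\pretr$ of the dg orbit category to obtain a quasi-functor whose $H^0$ is triangulated, and then use the hypothesis to identify the triangulated hull with $\ul{\ce}/\ul{F}$. The paper's proof is terser---it simply asserts that it suffices to produce a quasi-isomorphism $\widetilde{G}\to\widetilde{G}M_{\widetilde{F}}$ and then observes that $p(P_i)\cong pF(P_i)$ in $\paf$---whereas you are more explicit both about the source of the isomorphism (the canonical $\phi:p\iso p\circ F$, which is what guarantees naturality and hence compatibility with the differentials) and about threading together the three universal properties to identify $H^0(\Phi)\circ\iota$ with the functor of \thref{stable embedding}.
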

\begin{proof}
It is enough to prove that there is a quasi-isomorphism of quasi-functors $\phi:\widetilde {G}\to\widetilde {G} M_{\widetilde{F}} $ such that $(\widetilde{G},\phi)\in \eff (\ca c (\cp)_{dg}, M_{\widetilde{F}}, \ca c(\proj(\paf))$. In other words, we have to prove that for all complexes $P^{\cdot} \in \ac(\cp)_{dg}$ and $Q^{\cdot} \in \ac(\proj(\paf))_{dg}$, there is a quasi-isomorphism 
\begin{equation*}
\xymatrix{
\ch om (Q^{\cdot},\widetilde{G}(P^{\cdot})) \ar[r] & \ch om (Q^{\cdot},\widetilde{G}\widetilde{F}(P^{\cdot})).
}
\end{equation*}
The complex $P^{\cdot}$ is of the form 
\begin{equation*}
\cdots \longrightarrow P_i \longrightarrow P_{i+1} \longrightarrow \cdots,
\end{equation*}
where $P_i$ is object in $\cp$ for each $i\in \Z$. Then the complex $\widetilde{G}(P^{\cdot})$ is given by
\begin{equation*}
\cdots \longrightarrow \paf(?,p(P_i)) \longrightarrow \paf(?,p(P_{i+1})) \longrightarrow \cdots
\end{equation*}
and the complex $\widetilde{G}\widetilde{F}(P^{\cdot})$ is given by
\begin{equation*}
\cdots \longrightarrow \paf(?,pF(P_i)) \longrightarrow \paf(?,pF(P_{i+1})) \longrightarrow \cdots.
\end{equation*}
For each $i \in \Z$ the objects $P_i$ and $F(P_i)$ are isomorphic in $\paf$. Therefore there is an isomorphism $ \paf(?,p(P_i))\to  \paf(?,pF(P_i))$. In particular, the complexes $\widetilde{G}(P^{\cdot})$  and $\widetilde{G}\widetilde{F}(P^{\cdot})$ are isomorphic. The claim follows.
\end{proof}

\begin{comment}
\begin{lemma}
Let $P^{\cdot}=(\cdots\to P_{-1} \to P_{0} \to P_{1} \to \cdots)$ be a complex in $\ca c (\cp)_{dg}$. Then there is a quasi isomorphism $\tilde{\pi} (P^{\cdot})\to \tilde{\pi} \tilde{F}(P^{\cdot})$.
\end{lemma} 

\begin{lemma}
The complexes $\ch om(Y^{\cdot},\tilde{\pi}\circ \tilde{F}(X^{\cdot}))$ and $\ch om (Y^{\cdot},\tilde{\pi}(X^{\cdot}))$ are quasi-isomorphic.
\end{lemma}

\begin{corollary}
The pair $(X_{\tilde{\pi}},\tilde{F})$ belongs to $\eff (\ca c(\cp)_{dg}, \tilde{F}, \ca c (\paf)_{dg})$.
\end{corollary}
Let $\tilde{G}$ be the quasi-functor in $\rep(\ac(\cp)_{dg}/\tilde{F},\ac(\paf))$ corresponding to $(X_{\tilde{\pi}},\tilde{F})$. Applying $H^0$ we obtain the canonical functor $\ul{\ce}/F \hookrightarrow \ul{\gpr}(\paf)$.
\begin{theorem}
\thlabel{triangulated functor}
Suppose that $\ul{\ce}/F$ is triangulated with respect to $(\ac (\cp)_{dg},\tilde{F})$. Then the canonical functor
\begin{equation*}
\ul{\ce}/F \to \ul{\gpr}(\paf)
\end{equation*}
is triangulated. Moreover, the image of a triangle $\eta: \pi (X) \to \pi (E') \to \pi (Y)  \to \Sigma\pi(X)$ is sent to the image of the short exact sequence in $\gpr(\paf)$ which corresponds to $\eta$ under the bijection in \thref{embedding lemma} $(ii)$.
\end{theorem}
\end{comment}
\section{The main theorems}
We keep the notation of the preceding section and suppose that $\ul{\ce}/\ul{F}$ is equivalent to its triangulated hull with respect to $(\ca c(\cp)_{dg},M_{\widetilde{F}})$. 

\begin{theorem}
\thlabel{main theorem}
Suppose that $\ce$ is Krull-Schmidt such that
\begin{itemize}
\item for each indecomposable object $X$, the ring $\ce(X,X)$ is local and $\ce(X,F^{l}(X))$ vanishes for all $l<0$,
\item for every pair of objects $Y$ and $Z$, the space $\ul{\ce}(Y,\ul{F}^l (Z))$ vanishes for $l\gg 0$. 
\end{itemize}
Then $\eaf$ admits the structure of a Krull-Schmidt Frobenius category which makes the canonical projection $\ce \to \eaf$ exact and whose stable category is triangle equivalent to $\ul{\ce}/\ul{F}$.
\end{theorem}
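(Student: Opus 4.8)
The plan is to transport the exact structure of $\gpr(\paf)$ to $\eaf$ along the restricted Yoneda embedding, using the Krull--Schmidt hypothesis to control the essential image and the comparison of triangulated structures of Section~5 to see that this image is extension-closed. To begin, since $\ce$ is Krull--Schmidt and satisfies the first bullet point, \thref{Krull-Schmidt}$(ii)$ shows that $\eaf$ is Krull--Schmidt, hence has split idempotents. The completed-orbit-category version of \thref{ambient} (\cf \thref{embedding lemma}$(iv)$, \thref{restriction to gpr} and \thref{important remark}) provides a fully faithful functor $\eaf\to\gpr(\paf)$ fitting, up to isomorphism, in a square with the Chen embedding $\ce\hookrightarrow\gpr(\cp)$ of \cite{Chen}, the projection $p$ and the restriction of $\pi$. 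Write $\ce'\subseteq\gpr(\paf)$ for its essential image; then $\proj(\paf)\subseteq\ce'$, the subcategory $\ce'$ is closed under finite direct sums, and, since $\eaf$ has split idempotents and $\eaf\to\gpr(\paf)$ is additive and fully faithful, $\ce'$ is closed under direct summands in $\gpr(\paf)$.

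The main step is that $\ce'$ is closed under extensions in $\gpr(\paf)$. By \thref{triangulated functor} (using the standing assumption that $\ul{\ce}/\ul{F}$ equals its triangulated hull) together with \thref{stable embedding} (using the second bullet point), the induced functor $\ul{\ce}/\ul{F}\to\ul{\gpr}(\paf)$ is fully faithful and triangulated; let $\cu\subseteq\ul{\gpr}(\paf)$ be its essential image. Because $\ul{\ce}/\ul{F}$ has the same objects as $\ul{\ce}$, the objects of $\cu$ are exactly those isomorphic in $\ul{\gpr}(\paf)$ to the image of an object of $\ce'$, and $\cu$ is closed under $\Sigma^{\pm1}$ and under cones. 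Given a conflation $0\to A\to B\to C\to 0$ in $\gpr(\paf)$ with $A,C\in\ce'$, the triangle $A\to B\to C\to\Sigma A$ exhibits $B$ as a cone of a morphism between objects of $\cu$, so $B\in\cu$; thus $B$ is stably isomorphic to some $B_0\in\ce'$, \ie $B\oplus P\cong B_0\oplus Q$ in $\gpr(\paf)$ for suitable $P,Q\in\proj(\paf)$. Since $\proj(\paf)\subseteq\ce'$ and $\ce'$ is closed under sums and summands, $B_0\oplus Q\in\ce'$, hence $B\oplus P\in\ce'$, hence $B\in\ce'$. So $\ce'$ is extension-closed and inherits an exact structure from $\gpr(\paf)$, which we transport to $\eaf$ via the equivalence $\eaf\simeq\ce'$.

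Next I would check that $\eaf$ is then Frobenius and that $p$ is exact. A conflation $0\to X\to Y\to Z\to 0$ in $\ce$ is sent by the Chen embedding to a conflation of $\cp$-modules, and applying $\pi$ yields an exact sequence in $\gpr(\paf)$: left-exactness follows by applying the faithful exact restriction functor $p^{\ast}$ and identifying $p^{\ast}\pi$ with $\prod_{l}F^{l}_{\ast}$ via \thref{embedding lemma}$(i)$, which is exact since each $F^{l}_{\ast}$ is an automorphism of $\Mod(\cp)$ and products are exact there; its three terms lie in $\gpr(\paf)$, so it is a conflation, and by the commutative square it is the image of $0\to p(X)\to p(Y)\to p(Z)\to 0$. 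Hence $p$ is exact; applying it to the conflations $0\to\Omega X\to P\to X\to 0$ and $0\to X\to I\to\Sigma X\to 0$ of $\ce$ (with $P,I\in\cp$) shows that $\eaf$ has enough projectives and enough injectives, all lying in $\proj(\paf)$. Conversely each object of $\proj(\paf)$ is projective--injective in $\gpr(\paf)$, hence in the extension-closed subcategory $\ce'$, while a projective (resp. injective) object of $\ce'$ is a summand of an object of $\proj(\paf)$ and hence lies there. So $\eaf$ is a Krull--Schmidt Frobenius category with projective--injective subcategory $\paf\cong\proj(\paf)$, and $p:\ce\to\eaf$ is exact.

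Finally, the exact fully faithful functor $\ce'\hookrightarrow\gpr(\paf)$ preserving projective--injectives realizes $\ul{\eaf}=\eaf/\langle\paf\rangle$ as a full triangulated subcategory of $\ul{\gpr}(\paf)$. By \thref{additive equivalence} (second bullet point again) the functor $\psi:\ul{\ce}/\ul{F}\to\eaf/\langle\paf\rangle$ is an equivalence, and composing $\psi$ with $\ul{\eaf}\hookrightarrow\ul{\gpr}(\paf)$ recovers, by uniqueness in the universal property of orbit categories, the triangulated functor of \thref{triangulated functor}; it then follows formally that $\psi$ is a triangle equivalence $\ul{\ce}/\ul{F}\simeq\ul{\eaf}$, as asserted. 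The hard part is the extension-closedness step, where the Krull--Schmidt hypothesis is essential: it is precisely what lets one upgrade the stable isomorphism $B\cong B_0$ to an honest isomorphism after adding projective summands, and then split $B$ off again.
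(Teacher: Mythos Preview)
Your proof is correct and follows the same overall strategy as the paper: embed $\eaf$ fully faithfully into $\gpr(\paf)$, show extension-closedness via the comparison of triangulated structures from Section~5, deduce the Frobenius structure, and identify the stable category using \thref{additive equivalence}. The one point of difference is in the extension-closedness step. The paper computes the bijection $\Ext^1_{\gpr(\paf)}(\pi(Y),\pi(X))\cong\Ext^1_{\ul{\ce}/\ul{F}}(Y,X)$ explicitly via \thref{embedding lemma}$(iii)$, picks the triangle in $\ul{\ce}/\ul{F}$ corresponding to the given extension, maps it back through the triangle functor of \thref{triangulated functor}, uses (TR3) to produce a comparison map, and then invokes the five lemma to conclude that the middle terms are stably isomorphic. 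You bypass this by observing directly that the essential image $\cu$ of the fully faithful triangle functor $\ul{\ce}/\ul{F}\to\ul{\gpr}(\paf)$ is a triangulated subcategory and hence closed under cones, which immediately forces the middle term of any conflation between objects of $\ce'$ to lie in $\cu$. Both routes land at the same place (stable isomorphism, then Krull--Schmidt and split idempotents to finish), and yours is a mild streamlining. Your treatment of the exactness of $p$ via $p^{\ast}\pi\cong\prod_l F^l_{\ast}$ and of the projective--injective identification is also more explicit than the paper's, which handles these points in a sentence each.
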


\begin{proof}
By part $(iv)$ of \thref{embedding lemma}, \thref{restriction to gpr} and \thref{stable embedding} there is a commutative diagram
\begin{equation*}
\xymatrix{
\ce  \ar[d]_p\ar@{^{(}->}[r] & \gpr(\cp) \ar_{\pi}[d]\\
\eaf \ar[d] \ar@{^{(}->}[r] & \gpr(\paf) \ar[d]\\
\ul{\ce}/\ul{F} \ar[r] & \ul{\gpr}(\paf).
}
\end{equation*}
The first two horizontal arrows are full and faithful. So we may think of $\ce $ (resp. $\eaf)$ as a full subcategory of $\gpr(\cp)$ (resp. $\gpr(\paf)$). In particular, we identify $p$ with the restriction of $\pi$ to $\ce$. 
We first show that $\eaf $ is an extension-closed subcategory of $\gpr(\paf)$. 
Let $0\to \pi(X) \to E \to \pi(Y) \to 0$ be an extension in $\gpr(\paf)$ 
between two objects of $\eaf$. 
By part $(iii)$ of Lemma \ref{embedding lemma},
we have an isomorphism 
\begin{align*}
\Ext^1_{\gpr(\paf)}(\pi(Y),\pi(X)) &\cong \prod_{l\in \Z}\Ext^{1}_{\gpr(\cp)}(Y,F^l_{\ast}(X)) \\
& \cong \prod_{l\in \Z}\Ext^{1}_{\ce}(Y,F^l(X)).
\end{align*} 
Since $\ul{\ce}(X,\ul{F}^l(Y))=0$ for $l\gg 0$ the categories 
$\ul{\ce} \corb \ul{F}$ and $\ul{\ce}/\ul{F}$ are isomorphic. 
In particular, there is an isomorphisms $\displaystyle{\prod_{l\in \Z}}\Ext^{1}_{\ce}(Y,F^l(X)) \cong \Ext^{1}_{\ul{\ce}/\ul{F}}(Y,F^l(X))$
and therefore, an isomorphism 
\begin{equation}
\label{bijection}
\Ext^1_{\gpr(\paf)}(\pi(Y),\pi(X)) \cong\Ext^{1}_{\ul{\ce}/\ul{F}}(Y,X).
\end{equation}
Let
\begin{equation*}
\xymatrix{
X \ar[r] & E' \ar[r] &Y  \ar[r] & \Sigma X
}
\end{equation*}
be the triangle in $\ul{\ce}/\ul{F}$ corresponding to the extension $0\to \pi(X)\to E \to \pi(Y) \to 0$ under (\ref{bijection}). By \thref{triangulated functor}, there is a commutative diagram in $\ul{\gpr}(\paf)$
\vspace{2mm}
\begin{equation*}
\vspace{2mm}
\xymatrix{
\pi(X) \ar[r]\ar[d]_{\cong} &E\ar@{.>}[d]_{a} \ar[r]& \pi(Y )\ar[r] \ar[d]_{\cong} & \Sigma \pi(X) \ar[d]_{\cong}  \\
\pi(X) \ar[r]& \pi(E') \ar[r] &  \pi(Y) \ar[r]  & \Sigma \pi (X).
}
\end{equation*}
The first row corresponds to the triangle associated to the extension $0\to \pi(X)\to E \to \pi(Y) \to 0$, the second row is the image of the triangle $X\to E'\to Y \to \Sigma X$ under the triangle functor $\ul{\ce}/\ul{F}\to \ul{\gpr}(\paf)$. Notice that the square on the right commutes by the choice of $X\to E'\to Y \to \Sigma X$. Therefore we obtain the arrow $E \overset{a}{\to} \pi (E')$ by the axiom of triangulated categories. The Yoneda functor sends each of the rows of this diagram to a long exact sequence (to the left). We apply the five lemma to conclude that $a$ is an isomorphism. Thus, $\pi(E)$ is a direct factor of the sum of $E'$ with a finitely-generated projective module. By \thref{Krull-Schmidt}, we have that $\eaf$ is Krull-Schmidt. In particular, it has split idempotents and it follows that $\pi(E)$ lies in $\ce \corb F$. Hence $\eaf$ is stable under extensions in $\gpr (\cp \corb F)$. Notice that the projective-injective objects of $\gpr(\paf)$ belong to $\eaf$ and are precisely the image of the projectives of $\ce$ under the right exact functor $\pi$. Therefore $\eaf$ is an exact category with enough projectives and such that projectives are injectives. Since $\eaf$ is closed under $\Sigma$ in $\gpr(\paf)$ it also has enough injectives. Now it is easy to see that $\ul{\eaf}$ is triangle equivalent to $\ul{\ce}/\ul{F}$. The exactness of the canonical functor $\ce \to \eaf$ follows from the fact that the induced functor $\ul{\ce} \to \ul{\ce}/\ul{F} $ is triangulated.
\end{proof}

We can use  \thref{ambient}, \thref{triangulated functor usual} and \thref{important remark} to prove the following theorem. The proof is essentially the same as the proof of \thref{main theorem}.

\begin{theorem}
\thlabel{main theorem usual}
Suppose that $\ce/F$ has split idempotents. Then $\ce /F$ admits the structure of a Frobenius category which makes the canonical projection $\ce \to \ce/F$ exact and whose stable category is triangle equivalent to $\ul{\ce}/\ul{F}$.
\end{theorem}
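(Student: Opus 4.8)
The plan is to repeat the proof of \thref{main theorem} almost word for word, with three substitutions: every product $\prod_{l\in\Z}$ is replaced by the direct sum $\bigoplus_{l\in\Z}$; the roles of \thref{ambient} and \thref{triangulated functor usual} are played by their counterparts for usual orbit categories (the analogues of \thref{embedding lemma} and \thref{additive equivalence} for usual orbit categories being available by \thref{important remark}); and the role of the Krull-Schmidt hypothesis is played by the assumption that $\ce/F$ has split idempotents. Concretely, I would first invoke \thref{ambient}, part $(iv)$ of the usual-orbit version of \thref{embedding lemma} and \thref{restriction to gpr} to obtain a commutative square with fully faithful horizontal embeddings $\ce\hookrightarrow\gpr(\cp)$ and $\ce/F\hookrightarrow\gpr(\cp/F)$ and vertical arrows $p$ and $\pi$, then identify $\ce/F$ with its essential image in $\gpr(\cp/F)$ and $p$ with the restriction of $\pi$.

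The heart of the proof is to show that $\ce/F$ is closed under extensions in $\gpr(\cp/F)$. Given an extension $0\to\pi(X)\to E\to\pi(Y)\to 0$ in $\gpr(\cp/F)$ with $X,Y$ objects of $\ce$, part $(iii)$ of the usual-orbit version of \thref{embedding lemma} yields $\Ext^1_{\gpr(\cp/F)}(\pi(Y),\pi(X))\cong\bigoplus_{l\in\Z}\Ext^1_{\gpr(\cp)}(Y,F^l_\ast(X))\cong\bigoplus_{l\in\Z}\Ext^1_{\ce}(Y,F^l(X))$, and since $\ce$ is Frobenius the right-hand side is $\bigoplus_{l\in\Z}\Hom_{\ul{\ce}}(Y,\ul{F}^l\Sigma X)=\Hom_{\ul{\ce}/\ul{F}}(Y,\Sigma X)=\Ext^1_{\ul{\ce}/\ul{F}}(Y,X)$, the last group being meaningful by the standing hypothesis that $\ul{\ce}/\ul{F}$ is its own triangulated hull. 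Here I would stress that no finiteness condition on $F$ is needed: this direct sum is, by definition of the orbit category, the morphism space $\ul{\ce}/\ul{F}(Y,\Sigma X)$, whereas in the completed setting of \thref{main theorem} one had a product and needed the extra hypothesis to identify it. Transporting the extension along this isomorphism produces a triangle $X\to E'\to Y\to\Sigma X$ in $\ul{\ce}/\ul{F}$, and \thref{triangulated functor usual} then supplies a morphism of triangles in $\ul{\gpr}(\cp/F)$ from the triangle of the extension to the image of $X\to E'\to Y\to\Sigma X$, whose two outer vertical maps are isomorphisms. Applying the Yoneda functor to the two rows and the five lemma forces the middle map $E\to\pi(E')$ to be an isomorphism in $\ul{\gpr}(\cp/F)$, so $\pi(E)$ is a direct summand of the sum of $E'$ with a finitely generated projective. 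Since $\proj(\cp/F)\cong\cp/F$ is contained in $\ce/F$ and $\ce/F$ has split idempotents, $\pi(E)$ lies in $\ce/F$; hence $\ce/F$ is extension-closed and inherits an exact structure from $\gpr(\cp/F)$.

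It remains to check the Frobenius structure and the comparison of stable categories, for which I would argue exactly as at the end of the proof of \thref{main theorem}. The projective-injective objects of $\gpr(\cp/F)$ are the objects of $\cp/F\subseteq\ce/F$, and are precisely the images under the right exact functor $\pi$ of the projectives of $\ce$; applying $\pi$ to the conflations $0\to\Omega X\to P_0\to X\to 0$ and $0\to X\to P^0\to\Sigma X\to 0$ of $\ce$ shows that $\ce/F$ has enough projectives and, being closed under $\Sigma$ in $\gpr(\cp/F)$, enough injectives, the two classes coinciding. Thus $\ce/F$ is a Frobenius category, the canonical projection $\ce\to\ce/F$ is exact because the induced stable functor $\ul{\ce}\to\ul{\ce}/\ul{F}$ is triangulated, and by \thref{important remark} the comparison functor $\psi\colon\ul{\ce}/\ul{F}\to\ce/F/\langle\cp/F\rangle$ is a fully faithful, essentially surjective triangle functor, hence a triangle equivalence onto the stable category of $\ce/F$. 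The only delicate point, inherited from \thref{main theorem}, is arranging the $\Ext^1$-isomorphism above and the triangle functor of \thref{triangulated functor usual} to be compatible enough that the five-lemma step applies; the rest is routine bookkeeping.
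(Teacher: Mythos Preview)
Your proposal is correct and follows essentially the same approach as the paper: the paper itself says only that the proof is the same as that of \thref{main theorem}, using \thref{ambient}, \thref{triangulated functor usual} and \thref{important remark} in place of their completed-orbit analogues, and this is precisely what you have spelled out. Your observation that in the usual-orbit setting the direct sum $\bigoplus_{l\in\Z}\Ext^1_{\ce}(Y,F^l(X))$ is \emph{by definition} the morphism space $\ul{\ce}/\ul{F}(Y,\Sigma X)$, so that no finiteness hypothesis is required, is exactly the content of \thref{important remark} and the point at which the two proofs diverge.
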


\begin{remark}
If we want to use the results in \cite{Keller triang} to determine if $\ul{\ce}/\ul{F}$ is triangulated we may assume that $\ce$ is $\Ext$-finite, since Keller considers $\Hom $-finite triangulated categories.
\end{remark}

\begin{remark}
We don not have a criterion to determine if $\ce / F$ has split idempotents.
\end{remark}

\section{Applications to cluster algebras (sketch)}

In this section we use the theory of Nakajima categories introduced in \cite{Keller Scherotzke 1}. The reader is referred to this article for some of the definitions and relevant background. We can use \thref{main theorem} to obtain explicit categorifications of families of finite-type cluster algebras with coefficients. In particular, we obtain a categorification of all skew-symmetric finite-type cluster algebras with universal coefficients.

\subsection{Nakajima categories}

Let $Q$ be a finite and acyclic quiver. The \emph{repetition quiver} (\cf \cite{Riedtmann80b}) 
$\Z Q$ is the quiver obtained from $Q$ as follows
\begin{itemize}
\item the set of vertices of $\Z Q$ is $(\Z Q)_0= Q_0 \times \Z$.
\item For each arrow $\alpha : i\longrightarrow j$ of $Q$ and each $p\in \Z$, $\Z Q$ has the arrows
\begin{equation*}
\xymatrix{
(\alpha,p):(i,p) \ar[r]  & (j,p) & \text{and} &  \sigma(\alpha, p):(j,p-1) \ar[r] & (i,p).
}
\end{equation*}
\item $\Z Q$ has no more arrows than the ones described above. 
\end{itemize}

There is a bijection on the set of arrows $\sigma:(\Z Q)_1\to (\Z Q)_1$ given by
\begin{equation*}
\sigma (\beta)=
\begin{cases}
\sigma(\alpha,p) & \text{if  } \beta=(\alpha,p), \\
(\alpha, p-1) & \text{if  } \beta=\sigma(\alpha,p).
\end{cases}
\end{equation*}
Let $\tau : \Z Q \to \Z Q$ be the automorphism of $\Z Q$ given by \emph{the translation by one unit}:
\begin{equation*}
\xymatrix{
\tau(i,p)=(i,p-1)    & \text{and} & \tau(\beta)=\sigma^2(\beta) 
}
\end{equation*}
for each vertex $(i,p)$ and each arrow $\beta$ of $\Z Q$. 

Let $k$ be a field. 
%The \emph{path category} $k \Z Q$ is the $k$-category whose objects are the vertices of $\Z Q$ and whose space of morphisms 
%\begin{equation*}
%k \Z Q(x, y)
%\end{equation*}
%from a vertex $x$ to a vertex $y$ is the vector space spanned by the paths from $x$ to $y$ in $\Z Q$. 
Following \cite{Gabriel80} and \cite{Riedtmann80}, we define the {\em mesh category $k(\Z Q)$} to be the quotient of the path category $k\Z Q$ by the ideal generated by the mesh relators, \ie the $k$-category whose objects are the vertices of $\Z Q$ and whose morphism space from $a$ to $b$ is the space of all $k$-linear combinations of paths from $a$ to $b$ modulo the subspace spanned by all elements $u r_x v$, where $u$ and $v$ are paths and 
\[
r_x = \sum_{\beta: y \to x} \beta \sigma(\beta): \quad
\raisebox{1.225cm}{\xymatrix@R=0.5cm@C=0.5cm{  & y_1 \ar[dr]^{\beta_1} & \\
\tau(x) \ar[ur]^{\sigma(\beta_1)} \ar[dr]_{\sigma(\beta_s)}  & \vdots & x \\
 & y_s \ar[ur]_{\beta_s} & }}
\]
is the {\em mesh relator} associated with a vertex $x$ of $\Z Q$. Here the sum runs over all arrows $\beta: y \to x$ of $\Z Q$.

\begin{definition}
The \emph{framed quiver} $\tilde{Q}$ associated to $Q$ is the quiver obtained from $Q$ by adding, for each vertex $i$, a new vertex $i'$ and a new arrow $i\to i'$. We call the vertices in $(\Z \tilde{Q})_0$ of the form $(i',n)$, $i\in Q_0$, $n\in \Z$, \emph{frozen vertices}. The \emph{regular Nakajima category} $\cR$ associated to $Q$ is the quotient of the path category $k\Z \tilde{Q}$ by the ideal generated by the mesh relators associated to the non-frozen vertices. The \emph{singular Nakajima category} $\cs$ is the full subcategory of $\mathcal{R}$ whose objects are the frozen vertices. 
\end{definition} 

\begin{remark}
Note that there is a bijection $\sigma: (\Z \tilde{Q})_0\to (\Z \tilde{Q})_0$ given by $\sigma: (i,n)\mapsto (i',n-1)$ and $(i',n)\mapsto (i,n)$ for $i$ a vertex of $Q$ and $n$ an integer.
\end{remark}

\begin{assumption}
From now on we let $Q$ be an orientation of a simply laced Dynkin diagram $\Delta$ and denote the bounded derived category of $\mod (kQ)$ by $\cd^b_Q$.
\end{assumption}

\begin{definition}
Let $C$ be a subset of $(\Z Q)_0$. Denote by $\cR_C$ the quotient of $\cR$ by the ideal generated by the identities of the frozen vertices not belonging to $\sigma^{-1}(C)$ and by $\cs_C$ its full subcategory formed by the vertices in $\sigma^{-1}(C)$. We call $C$ an \emph{admissible configuration of} $\Z Q$ if the sequences 
\begin{equation} \label{left-exact-sequences}
0 \to \cR_C(?,x) \to \bigoplus_{x \to y} \cR_C(?,y) \quad \mbox{and}\quad
0 \to \cR_C(x,?) \to \bigoplus_{y \to x} \cR_C(y,?) 
\end{equation}
are exact, where the sums range over all arrows of $\Z \tilde{Q}$ whose source (respectively, target) is $x$. We denote by $\Z \tilde{Q}_C$ the quiver obtained from $\Z \tilde{Q}$ by removing the vertices not belonging to $\sigma^{-1} (C)$. We refer the reader to Section 3.3 of \cite{Keller Scherotzke 1} for an account of sufficient conditions on $C$ in which \eqref{left-exact-sequences} holds.
\end{definition}

\begin{theorem} $($\cite{Keller Scherotzke 1}$)$
\thlabel{description of gpr}
Let $C$ be an admissible configuration of $\Z Q$. Then 
\begin{itemize}
\item[$(i)$] the restriction functor $\Mod \cR_C\to \Mod \cs_C$ induces an equivalence between the full subcategory of finitely generated projective $\cR_C$-modules $\proj (\cR_C)$ and the category $\gpr(\cs_C)$. In particular, it yields an isomorphism of $\Z \tilde{Q}_C$ onto the Auslander-Reiten quiver of $\gpr(\cs_C)$ so that the vertices of $\sigma^{-1}(C)$ correspond to the projective--injective objects,
\item[$(ii)$] there is a triangle equivalence $\Phi: \ul{\gpr}(\cs_C) \to \cd^b_Q$.
\end{itemize}
\end{theorem}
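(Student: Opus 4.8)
The plan is to prove part $(i)$ --- that the restriction functor $\res\colon\Mod\cR_C\to\Mod\cs_C$ restricts to an equivalence $\proj(\cR_C)\iso\gpr(\cs_C)$ between finitely generated projective $\cR_C$-modules and Gorenstein-projective $\cs_C$-modules --- and then to deduce part $(ii)$ by passing to stable categories. I would begin by using the Yoneda lemma to identify $\proj(\cR_C)$ with the additive hull of the $k$-category $\Z\tilde Q_C$ (objects the vertices $x$, morphism spaces $\cR_C(x,x')$), so that $\res$ becomes $x\mapsto P_x := \cR_C(?,x)|_{\cs_C}$; note $P_x$ is a representable, hence projective, $\cs_C$-module exactly when $x$ is a frozen vertex. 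Three things then have to be checked: (a) $P_x\in\gpr(\cs_C)$ for all $x$; (b) $\res$ is fully faithful, i.e. $\cR_C(x,x')\to\Hom_{\cs_C}(P_x,P_{x'})$ is bijective; (c) $\res$ is essentially surjective. The ``in particular'' clause and part $(ii)$ will be consequences.

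For (a) and (b) the key observation is that the admissibility of $C$ --- the exactness hypotheses \eqref{left-exact-sequences} --- is calibrated precisely so that the mesh structure of $\cR_C$ restricts well to $\cs_C$. Iterating \eqref{left-exact-sequences} expresses each representable $\cR_C$-module as a subobject of a (co)resolution built from \emph{frozen} representables, and dually; splicing these produces, for every $x$, an acyclic complex of frozen representables whose restriction to $\cs_C$ is a complex of projective $\cs_C$-modules that stays acyclic and, by \eqref{left-exact-sequences} again, remains acyclic after $\Hom_{\cs_C}(-,\cs_C(?,z))$ for every frozen $z$ --- here one also uses $\Hom_{\cs_C}(P_x,\cs_C(?,z))\cong\cR_C(z,x)$, a special case of (b). This gives a complete projective resolution of $P_x$, so $P_x\in\gpr(\cs_C)$. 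For (b): when $x$ is frozen it is the Yoneda lemma for $\cs_C$; when $x$ is non-frozen, \eqref{left-exact-sequences} embeds $\cR_C(?,x)$ into a sum of frozen representables with cokernel again built from frozen representables, and since $\res$ is exact and already fully faithful on frozen representables, a short diagram chase propagates bijectivity to all $x$.

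For (c) and the ``in particular'' clause: $\gpr(\cs_C)$ is a Krull--Schmidt Frobenius category by \thref{gpr is Frobenius} (Krull--Schmidt because the Hom-spaces in play, spanned by paths in a Dynkin-type repetition quiver, are finite dimensional), its projective--injectives are the $P_x$ with $x$ frozen by (b), and by (a)--(b) the mesh sequences of $\cR_C$ go over to the Auslander--Reiten sequences of $\gpr(\cs_C)$, the translation of $\Z\tilde Q_C$ going to the AR translation. Essential surjectivity then follows either by a d\'evissage --- lift a complete projective resolution of an arbitrary $M\in\gpr(\cs_C)$ along the fully faithful $\res$ and recognize the relevant cocycle as a representable $\cR_C$-module --- or by knitting the AR quiver outward from the projective--injectives; this simultaneously identifies $\Z\tilde Q_C$ with the AR quiver of $\gpr(\cs_C)$ with $\sigma^{-1}(C)$ indexing the projective--injective vertices, finishing $(i)$. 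For $(ii)$, part $(i)$ gives $\ul{\gpr}(\cs_C)=\gpr(\cs_C)/[\proj(\cs_C)]\simeq\proj(\cR_C)/[\text{morphisms factoring through }\sigma^{-1}(C)]$, and killing the kept frozen vertices collapses every mesh relator of $\cR_C$ to the mesh relator of the \emph{unframed} repetition quiver, so this quotient is the additive hull of the mesh category $k(\Z Q)$. By a theorem of Happel, $k(\Z Q)$ is the category of indecomposables of $\cd^b_Q=\cd^b(\mod kQ)$ with $\Z Q$ its Auslander--Reiten quiver, which gives an additive equivalence $\ul{\gpr}(\cs_C)\simeq\cd^b_Q$. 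To promote it to a triangle equivalence $\Phi$ I would transport structure along a dg enhancement: $\cd^b_Q$ admits, up to quasi-equivalence, a canonical enhancement whose subcategory on the indecomposables has $H^0$ equal to $k(\Z Q)$, the additive equivalence lifts to a quasi-equivalence of dg categories, and hence it is triangulated. (Alternatively, use that Dynkin derived categories are \emph{standard} and check that $\Phi$ sends the AR triangles of $\ul{\gpr}(\cs_C)$, the images of the mesh relators, to those of $\cd^b_Q$.)

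The hard part will be this last step: upgrading the additive equivalence of $(ii)$ to a triangle equivalence, i.e. matching the triangulated structure that $\ul{\gpr}(\cs_C)$ carries as the stable category of a Frobenius category with the standard one on $\cd^b(\mod kQ)$. Both the dg-enhancement route (uniqueness of enhancements of Dynkin derived categories, plus lifting the equivalence) and the standardness route require genuine representation-theoretic input about Dynkin quivers beyond the combinatorial bookkeeping with mesh relators; by contrast (a), (b) and (c) of $(i)$ should be a careful but essentially formal exploitation of the exactness conditions that define an admissible configuration.
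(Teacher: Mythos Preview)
The paper does not prove this theorem at all: it is quoted verbatim from \cite{Keller Scherotzke 1} and used as a black box in Section~7, so there is no ``paper's own proof'' to compare your proposal against. Your outline is a plausible reconstruction of the Keller--Scherotzke argument --- in particular, your use of the exactness conditions \eqref{left-exact-sequences} to build complete projective resolutions and to propagate full faithfulness from frozen to non-frozen vertices is exactly the r\^ole those conditions are designed to play --- and your identification of the upgrade from an additive to a triangle equivalence in $(ii)$ as the nontrivial step is correct. If your aim is simply to match the present paper, you may cite the result without proof; if you want a self-contained argument, your sketch is on the right track but you should consult \cite{Keller Scherotzke 1} for the details, especially for the triangle-equivalence part of $(ii)$.
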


\subsection{Categorification of cluster algebras with coefficients}
Let $\tau:\cd^b_Q \to \cd^b_Q $ be the \emph{Auslander-Reiten translation} and $\Sigma : \cd^b_Q \to \cd^b_Q$ be the suspension functor of $\cd^b_Q$. The cluster category $\cc_Q$ was introduced in \cite{BMRRT} and is defined as the orbit category 
\begin{equation*}
\cc_Q= \cd^b_Q/\Sigma \circ \tau^{-1}.
\end{equation*}
The category $\cc_Q$ is triangulated. Its triangulated structure comes from the dg category $C^b(\proj kQ)_{dg}$.

We denote $\Sigma \circ \tau^{-1}$ by $F_{\cd}$. By a well know result of Happel \cite{Happel derived}, we know that the Auslander-Reiten quiver of the category $\cd^b_Q$ is canonically isomorphic to $\Z Q$. Therefore, $F_{\cd}$ induces an automorphism of translation quivers $F:\Z Q \to \Z Q$. Let $C \subset \Z Q$ be an admissible configuration which is invariant under $F$. By \thref{description of gpr}, $F$ induces a functor 
\begin{equation*}
F_{\ast}:\gpr (\cs_C)\to \gpr (\cs_C)
\end{equation*}
which is in fact exact. \thref{main theorem} is fundamental to prove the following results.

\begin{lemma}
\thlabel{Frobenius model}
The completed orbit category $\gpr(\cs) \corb F_{\ast}$ admits the structure of a Frobenius category whose stable category is triangle equivalent to $\cc_Q$.
\end{lemma}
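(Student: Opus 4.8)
The plan is to apply \thref{main theorem} to the Frobenius category $\ce:=\gpr(\cs_C)$ equipped with the exact automorphism $F:=F_\ast$. The structural requirements are immediate: by \thref{gpr is Frobenius} the category $\gpr(\cs_C)$ is Frobenius with subcategory of projective-injectives $\proj(\cs_C)$, and by \thref{description of gpr}$(i)$ its Auslander-Reiten quiver is the locally finite quiver $\Z\tilde Q_C$, so $\gpr(\cs_C)$ is Krull-Schmidt and every indecomposable has local endomorphism ring; the functor $F_\ast$ is exact, as recalled before the statement. This gives the Krull-Schmidt hypothesis and the first half of the first bullet of \thref{main theorem}.

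Next I would verify the two finiteness conditions on $F_\ast$. Identifying, via \thref{description of gpr}$(i)$, the indecomposables of $\gpr(\cs_C)$ with the vertices of $\Z\tilde Q_C$, the automorphism $F$ acts as the automorphism of translation quivers induced by $F_\cd=\Sigma\tau^{-1}$; it shifts every vertex strictly forward in the natural partial order, while every morphism of $\cR_C$ is a linear combination of paths of $\Z\tilde Q_C$, which move forward by a bounded amount. Hence $\gpr(\cs_C)(X,F_\ast^l(X))=0$ for all $l<0$ (and \emph{a fortiori} for $l\ll0$, so that $\gpr(\cs_C)\corb F_\ast$ is well defined), which is the second half of the first bullet. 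For the second bullet, transport along the triangle equivalence $\Phi:\ul{\gpr}(\cs_C)\overset{\sim}{\longrightarrow}\cd^b_Q$ of \thref{description of gpr}$(ii)$, under which the induced stable automorphism $\ul F$ corresponds to $F_\cd$: since $\cc_Q=\cd^b_Q/F_\cd$ is Hom-finite (\cite{BMRRT}), for fixed $Y,Z$ the group $\ul{\gpr}(\cs_C)(Y,\ul F^l(Z))\cong\cd^b_Q(\Phi Y,F_\cd^l\Phi Z)$ is nonzero for only finitely many $l\in\Z$, in particular it vanishes for $l\gg0$.

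It then remains to check the hypothesis standing throughout Section~6: that $\ul{\gpr}(\cs_C)/\ul F$ is equivalent to its triangulated hull with respect to $(\ca c(\proj(\cs_C))_{dg},M_{\widetilde F})$, where $\widetilde F$ is the componentwise dg lift of $F_\ast$. Transporting along $\Phi$ together with the identification of dg enhancements $\ca c(\proj(\cs_C))_{dg}\simeq C^b(\proj kQ)_{dg}$ from \cite{Keller Scherotzke 1}, this reduces to the statement that $\cd^b_Q/F_\cd=\cc_Q$ coincides with its triangulated hull relative to $C^b(\proj kQ)_{dg}$ and the canonical dg lift of $\Sigma\tau^{-1}$, which is Keller's theorem on cluster categories of Dynkin type (\cite{Keller triang}). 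Then \thref{main theorem} applies and yields that $\gpr(\cs_C)\corb F_\ast$ is a Krull-Schmidt Frobenius category whose stable category is triangle equivalent to $\ul{\gpr}(\cs_C)/\ul F\cong\cd^b_Q/F_\cd=\cc_Q$.

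I expect the main obstacle to be the compatibility of dg enhancements used above: one must lift the equivalence $\ul{\gpr}(\cs_C)\simeq\cd^b_Q$ to a quasi-equivalence carrying $\ca c(\proj(\cs_C))_{dg}$ to $C^b(\proj kQ)_{dg}$ and $M_{\widetilde F}$ to the dg lift of $\Sigma\tau^{-1}$ for which the triangulated-hull statement is known, so that the hypothesis of \thref{main theorem} really does transfer. A secondary point requiring care is the non-stable vanishing $\gpr(\cs_C)(X,F_\ast^l X)=0$ for $l<0$, where one must also exclude morphisms factoring through the projective-injective objects that sit at the frozen vertices $\sigma^{-1}(C)$.
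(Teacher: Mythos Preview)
Your proposal follows exactly the route indicated by the paper: apply \thref{main theorem} to $\ce=\gpr(\cs_C)$ with $F=F_\ast$, using \thref{description of gpr} to transport the finiteness and triangulated-hull hypotheses to $\cd^b_Q$ and $F_\cd=\Sigma\tau^{-1}$. Note, however, that the paper does \emph{not} actually prove \thref{Frobenius model}; it only states that \thref{main theorem} is the key ingredient and defers the proof (together with that of \thref{categorification}) to the forthcoming paper \cite{Najera Universal}. So there is no detailed argument in the present paper to compare against, but your outline matches the intended strategy, and the two points you flag as obstacles --- lifting the equivalence $\Phi$ to the level of dg enhancements so that the triangulated-hull hypothesis transfers, and the non-stable vanishing $\gpr(\cs_C)(X,F_\ast^l X)=0$ for $l<0$ including morphisms through frozen vertices --- are precisely the verifications one expects the deferred proof to supply.
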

 
\begin{theorem}
\thlabel{categorification}
$(i)$
Let $C\subset \Z Q$ be an admissible configuration which is invariant under $F$. Then  $\gpr(\cs_C)\corb F$ is a 2-Calabi-Yau realization $($in the sense of \cite{BIRS}$)$ of a cluster algebra with coefficients of type $\Delta$,

$(ii)$ if $C =\Z Q$ then  $\gpr(\cs_C)\corb F$ is a 2-Calabi-Yau realization of the cluster algebra with universal coefficients of type $\Delta$. 
\end{theorem}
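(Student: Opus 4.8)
The plan is to deduce everything from \thref{main theorem}, applied as in \thref{Frobenius model} to the Frobenius category $\ce=\gpr(\cs_C)$ together with the exact automorphism $F_{\ast}$, and then to upgrade the resulting Frobenius category to a $2$-Calabi--Yau realisation by exhibiting a cluster-tilting object. First I would verify the hypotheses of \thref{main theorem} for $\ce=\gpr(\cs_C)$. The first bullet — that the endomorphism ring of each indecomposable is local and that $\gpr(\cs_C)(X,F_{\ast}^{l}(X))=0$ for $l<0$ — holds because, by \thref{description of gpr}$(i)$, $\gpr(\cs_C)$ is $\Hom$-finite and Krull--Schmidt with Auslander--Reiten quiver $\Z\tilde{Q}_C$, every arrow of $\Z\tilde{Q}$ preserves or raises the $\Z$-coordinate of its endvertices, and the automorphism of $\Z\tilde{Q}_C$ induced by $F_{\cd}=\Sigma\circ\tau^{-1}$ strictly raises it, so there is no nonzero path from a vertex $x$ to $F^{l}(x)$ when $l<0$. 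The second bullet — that $\ul{\ce}(Y,\ul{F_{\ast}}^{l}(Z))=0$ for $l\gg 0$ — follows from \thref{description of gpr}$(ii)$: under the triangle equivalence $\ul{\gpr}(\cs_C)\simeq\cd^b_Q$ the automorphism $\ul{F_{\ast}}$ corresponds to $F_{\cd}$, and the spaces in question are the summands $\Hom_{\cd^b_Q}(Y,F_{\cd}^{l}(Z))$ of the finite-dimensional space $\Hom_{\cc_Q}(Y,Z)$, all but finitely many of which vanish (\cite{BMRRT}). Finally, $\ul{\ce}/\ul{F_{\ast}}\simeq\cd^b_Q/F_{\cd}=\cc_Q$ is equivalent to its triangulated hull by Keller's theorem \cite{Keller triang}; one needs only that the enhancement $\ac(\proj(\cs_C))_{dg}$ of $\ul{\gpr}(\cs_C)$ equipped with $M_{\widetilde{F_{\ast}}}$ is, compatibly with the automorphisms, quasi-equivalent to $(C^b(\proj kQ)_{dg},M_{\widetilde{F_{\cd}}})$, the datum used in \cite{Keller triang} (see \cite{Keller Scherotzke 1}). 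Then \thref{main theorem} gives that $\gpr(\cs_C)\corb F$ is a Krull--Schmidt Frobenius category, that the canonical projection is exact, and that the stable category of $\gpr(\cs_C)\corb F$ is triangle equivalent to $\cc_Q$.

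Next I would produce a cluster-tilting object. Since $\cc_Q$ is $2$-Calabi--Yau (\cite{BMRRT, Keller triang}), $\gpr(\cs_C)\corb F$ is a stably $2$-Calabi--Yau Frobenius category, and its indecomposable projective-injective objects are the images under $p$ of those of $\proj(\cs_C)$, hence are indexed by the $F$-orbits of $\sigma^{-1}(C)$. The image $\ol{kQ}$ of the tilting object $kQ\in\cd^b_Q$ is a cluster-tilting object of $\cc_Q$; lifting $\ol{kQ}$ through the stable equivalence above to an object $T_{0}$ of $\gpr(\cs_C)\corb F$ rigid modulo projective-injectives, and adjoining one representative of each $F$-orbit of projective-injective indecomposable, yields — by the standard passage between cluster-tilting objects of a Frobenius category and of its stable category — a cluster-tilting object $T=T_{0}\oplus(\text{projective-injectives})$ of $\gpr(\cs_C)\corb F$.

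By \cite{BIRS} the pair $(\gpr(\cs_C)\corb F,T)$ then carries a cluster structure whose exchange data is read off from the Gabriel quiver of $\End_{\gpr(\cs_C)\corb F}(T)$: the mutable vertices correspond to the summands of $\ol{kQ}$, so the exchange combinatorics is of type $\Delta$, as for $\cc_Q$; the frozen vertices correspond to $p(\sigma^{-1}(C))$ modulo $F$; and the associated cluster character realises a cluster algebra of type $\Delta$ with coefficients indexed by the projective-injective indecomposables. This is $(i)$. For $(ii)$, when $C=\Z Q$ the set $\sigma^{-1}(C)$ consists of all frozen vertices, so via the bijection $\sigma$ its $F$-orbits correspond to the $F$-orbits of $(\Z Q)_{0}$, that is, to the indecomposable rigid objects of $\cc_Q$, equivalently to the cluster variables of the type-$\Delta$ cluster algebra; comparing the resulting frozen part of the Gabriel quiver of $\End(T)$ with the known description of the universal coefficient system of a finite-type cluster algebra (one generator per cluster variable) identifies $\gpr(\cs_C)\corb F$ with a $2$-Calabi--Yau realisation of the cluster algebra with universal coefficients of type $\Delta$.

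The step I expect to be the main obstacle is the dg-compatibility invoked at the end of the first paragraph: one must transport Keller's triangulated-hull theorem for $\cc_Q$ from the enhancement $C^b(\proj kQ)_{dg}$ to the enhancement $\ac(\proj(\cs_C))_{dg}$ coming from the Nakajima category, and check that $M_{\widetilde{F_{\ast}}}$ is a dg lift of $F_{\cd}$; both should follow from rigidity for $\cd^b(\mod kQ)$ in Dynkin type — uniqueness of the dg enhancement and of dg lifts of autoequivalences, the latter using $\mathrm{HH}^{<0}(kQ)=0$. A secondary difficulty, specific to $(ii)$, is the precise matching of the frozen part of the Gabriel quiver of $\End(T)$ with the universal extended exchange matrix, for which one should use the explicit description of $\Z\tilde{Q}$ and of the projective-injective objects of $\gpr(\cs_C)$ in \cite{Keller Scherotzke 1}.
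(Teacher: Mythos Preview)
The paper does not actually prove this theorem. Immediately after stating \thref{categorification} it says: ``The proofs of \thref{Frobenius model} and \thref{categorification}, the details of this section and further applications will be given in a forthcoming paper \cite{Najera Universal}.'' So there is no proof in the paper to compare your proposal against; the section is explicitly labelled a sketch.

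That said, your outline is consistent with the architecture the paper sets up: it applies \thref{main theorem} to $\ce=\gpr(\cs_C)$ with $F_{\ast}$ (exactly as in \thref{Frobenius model}), uses \thref{description of gpr} to transport the problem to $\cd^b_Q$ and $\cc_Q$, and then invokes the cluster-tilting theory of \cite{BIRS}. The two obstacles you flag are the right ones. The first---showing that $\ul{\ce}/\ul{F_{\ast}}$ is triangulated with respect to the enhancement $\ac(\proj(\cs_C))_{dg}$ rather than $C^b(\proj kQ)_{dg}$---is genuinely not handled anywhere in this paper and is precisely the kind of detail deferred to \cite{Najera Universal}; your appeal to uniqueness of enhancements and of dg lifts in Dynkin type is a reasonable strategy but is not something the present paper provides. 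The second---matching the frozen part of the endomorphism quiver with the universal extended exchange matrix of \cite{clusters 4}---is the substantive content of part~$(ii)$ and again lies entirely outside this paper. So your proposal is a credible plan, but it cannot be checked against the paper because the paper contains no argument for this statement.
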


The proofs of \thref{Frobenius model} and \thref{categorification}, the details of this section and further applications will be given in a forthcoming paper \cite{Najera Universal}.


\begin{thebibliography}{1}

\bibitem{Asashiba}
H. Asashiba, \emph{The 2-categories of G-categories and of G-graded categories are 2-equivalent}, arXiv:0905.3884.

\bibitem{Asashiba 2}
\bysame, \emph{A generalization of Gabriel's Galois covering functors and derived equivalences}, Journal of Algebra {\bf 334} (2011), No. 1, 109--149.

\bibitem{AR}
M. Auslander and I. Reiten,  \emph{Applications of contravariantly finite subcategories}. Adv. Math. {\bf 86} (1991), 111--152
.

\bibitem{Bondal Kapranov enhanced}
A. I. Bondal and M. M. Kapranov, \emph{Enhanced triangulated categories}, Mat. Sb. {\bf 181} (1990), no. 5, 669--683, translation in Math. USSR-Sb. 70 no. 1, 93--107.

\bibitem{BIRS} 
A. B. Buan, O. Iyama, I. Reiten, J. Scott, \emph{Cluster structures for 2-Calabi-Yau categories and unipotent groups}, Amer. J. Math. \textbf{133} (2011), no. 4, 835--887.

\bibitem{BMRRT}
A. B. Buan, R. J. Marsh, M. Reineke, I. Reiten and G. Todorov, \emph{Tilting theory and cluster combinatorics}, Advances in Mathematics \textbf{204 (2)} (2006), 572--618.

\bibitem{Buhler}
T. B\"uhler, \emph{Exact categories}. Expo. Math. {\bf 28} (2010), 1--69.

\bibitem{Chen}
X. U. Chen, \emph{Three results on Frobenius categories}, Mathematische Zeitschrift {\bf270} (2012), 43--58.

\bibitem{Cibils Marcos} C. Cibils and E. \emph{Skew category, Galois covering and smash product of a k-category}, Proceedings of the AMS {\bf 134} (2005), no. 1, 39--50.

\bibitem{Drinfeld}
V. Drinfeld, \emph{DG quotients of DG categories}, J. Algebra {\bf 272} (2004), no. 2, 643--691.

\bibitem{clusters 1}
S. Fomin, A. Zelevinsky, \emph{Cluster algebras I: Foundations}, J. Amer. Math. Soc. {\bf 15}
(2002), 497--529.

\bibitem{clusters 4}
\bysame, \emph{Clusters algebras IV: Coefficients}, Composito Mathematica \textbf{143} (2007), 112--164.

\bibitem{Fu root}
Ch. Fu, \emph{On root categories of finite-dimensional algebras}, arXiv:1103.3335.

\bibitem{Gabriel80}
\bysame, \emph{Auslander-{R}eiten sequences and representation-finite
  algebras}, Representation theory, I (Proc. Workshop, Carleton Univ., Ottawa,
  Ont., 1979), Springer, Berlin, 1980, pp.~1--71.

\bibitem{Happel derived}
D. Happel, \emph{On the derived category of a finite-dimesional algebra}, Comment. Math. Helv. {\bf 62} (1987),
no. 3, 339--389.

\bibitem{Happel book}
\bysame, \emph{Triangulated categories in the representation theory of finite-dimensional algebras}, Cambridge University Press, Cambridge, 1988.

\bibitem{Igusa Todorov}
K. Igusa, G. Todorov, \emph{Continuous cluster categories I}, Algebras and Representation Theory (2014), DOI 10.1007/s10468-014-9481-z. 

\bibitem{Keller chain}
B. Keller, \emph{Chain complexes and stable categories}, Manus. Math. {\bf 67} (1990), 379--417.

\bibitem{Keller deriving dg}
\bysame, \emph{Deriving DG categories}, Ann. Scient. Ec. Norm. Sup. \textbf{27} (1994), 63--102.

\bibitem{Keller on dg}
\bysame, \emph{On differential graded categories, in: International Congress of Mathematicians}, vol. II, European Mathematical Society, Zürich, 2006, pp. 151--190. MR2275593 (2008g:18015).   

\bibitem{Keller triang} 
\bysame,\emph{On triangulated orbit categories}, Doc. Math. \textbf{10} (2005), 551--581. 

\bibitem{Keller triang corrections}
\bysame, \emph{Corrections to "On triangulated orbit categories"}, available at the author's webpage.

\bibitem{Keller survey}
\bysame, \emph{Cluster algebras, quiver representations and triangulated categories}, Workshop on Triangulated Categories (Leeds, 2006).


\bibitem{Keller Scherotzke 1}
B. Keller and S. Scherotzke, \emph{Graded quiver varieties and derived categories}, J. reine angew. Math., DOI 10.1515 / crelle--2013--0124.

%\bibitem{Volkov }
%Y. Volkov and A. Zvonareva, \emph{On standard derived equivalences of orbit categories}, arXiv:1501.00709.

\bibitem{Keller Vossieck}
B. Keller and D. Vossieck, \emph{Sous les cat\'egories d\'eriv\'ees}, C. R. Acad. Sci. Paris, {\bf 305} (1987), 225--228.

\bibitem{Krause 98}
H. Krause, \emph{Functors on locally finitely presented additive categories}, Colloquium Mathematicum, {\bf 75} (1998), no. 1, 105--132.

\bibitem{Najera Universal}

A. Nájera Chávez, \emph{A 2-Calabi-Yau realization of finite-type cluster algebras with universal coefficients}, in preparation.

\bibitem{Qin}
F. Qin, \emph{Quantum groups via cyclic quiver varieties I}, arXiv:1312.1101.

\bibitem{Quillen}
D. Quillen, \emph{Higher Algebraic K-theory I}, Springer LNM {\bf 341}, 1973, 85--147.

\bibitem{Riedtmann80}
Ch. Riedtmann, \emph{Algebren, Darstellungsk{\"o}cher,
  \"{U}berlagerungen und Zur{\"u}ck}, Comment. Math. Helv. \textbf{55} (1980),
  no.~2, 199--224.

\bibitem{Riedtmann80b} 
\bysame, \emph{Representation-finite self-injective algebras of class $A_n$}, Representation theory, II (Proc. Second Internat. Conf., Carleton Univ., Ottawa, Ont., 1979), Lecture Notes in Math., vol. 832, Springer, Berlin, 1980, pp. 449--520.

\bibitem{Scherotzke}
S. Scherotzke, \emph{Quiver varieties and self-injective algebras}, arXiv:1405.4729.

\bibitem{Tabuada invariants}
G. Tabuada, \emph{Invariants additifs de dg-cat\'egories}, Int. Math. Res. Notices {\bf 53} (2005), 3309--3339.

\end{thebibliography}
\end{document}